\documentclass[10pt]{amsart}
\usepackage{amsmath}
\usepackage{amsfonts}
\usepackage{amssymb}
\usepackage{amsthm}
\usepackage{url}
\usepackage[all]{xy}

\usepackage{dsfont}
\usepackage{graphicx}
\usepackage{caption}
\usepackage{subcaption}
\usepackage{comment} 
\usepackage{stmaryrd}
\usepackage{hyperref}
\usepackage{todonotes}
\usepackage{color}
\usepackage{enumerate,tikz-cd}
\usepackage{fixltx2e}
\usepackage{MnSymbol}
\usepackage{comment}
\allowdisplaybreaks

\numberwithin{equation}{section}

\newtheorem{proposition}{Proposition}[section]
\newtheorem{lemma}[proposition]{Lemma}
\newtheorem{theorem}[proposition]{Theorem}

\newtheorem{corollary}[proposition]{Corollary}

\theoremstyle{definition}
\newtheorem{remark}[proposition]{Remark}
\newtheorem{definition}[proposition]{Definition}
\newtheorem{example}[proposition]{Example}

\DeclareMathOperator{\Bl}{Bl}

\DeclareMathOperator{\Ima}{Im}

\DeclareMathOperator{\ord}{ord}
\DeclareMathOperator{\supp}{supp}

\DeclareMathOperator{\Ver}{Vert}
\DeclareMathOperator{\Rees}{Rees}

\newcommand{\R}{\mathbb{R}}
\newcommand{\C}{\mathbb{C}}

\newcommand{\Q}{\mathbb{Q}}

\newcommand{\pr}{\mathbb{P}}
\renewcommand{\epsilon}{\varepsilon}

\newcommand{\scO}{\mathcal{O}}

\newcommand{\mfa}{\mathfrak{a}}

\newcommand{\mfb}{\mathfrak{b}}

\newcommand{\E}{\mathcal{E}}
\renewcommand{\L}{\mathcal{L}}
\newcommand{\X}{\mathcal{X}}
\newcommand{\Y}{\mathcal{Y}}

\newcommand{\scZ}{\mathcal{Z}}

\renewcommand{\phi}{\varphi}

\newcommand\MA{\mathrm{MA}}

\newcommand\mi{^{-1}}
\newcommand\vol{\mathrm{vol}}
\newcommand\na{{\mathrm{NA}}}

\newcommand\bbc{\mathbb{C}}

\newcommand\bbp{\mathbb{P}}
\newcommand\bbq{\mathbb{Q}}
\newcommand\bbr{\mathbb{R}}

\newcommand\cE{\mathcal{E}}
\newcommand\cF{\mathcal{F}}
\newcommand\cG{\mathcal{G}}

\newcommand\cL{\mathcal{L}}
\newcommand\cM{\mathcal{M}}

\newcommand\cO{\mathcal{O}}

\newcommand\cX{\mathcal{X}}
\newcommand\cY{\mathcal{Y}}

\newcommand{\norm}[1][\cdot]{\left\|#1\right\|}

\newcommand\triv{{\mathrm{triv}}}
\newcommand\Bs{\mathrm{Bs}}

\newcommand\NA{^{\mathrm{NA}}}

\newcommand\red{{\mathrm{red}}}

\renewcommand\Big{{\mathrm{Big}}}

\renewcommand\div{\mathrm{div}}

\title[A birational version of K-stability for big classes]{A birational version of K-stability for big classes}
\author[Ruadha\'i Dervan and R\'emi Reboulet]{Ruadha\'i Dervan and R\'emi Reboulet}

\address{Ruadha\'i Dervan, Warwick Mathematics Institute, Zeeman Building
University of Warwick, Coventry CV4 7AL United Kingdom}\email{ruadhai.dervan@warwick.ac.uk}
\address{R\'emi Reboulet, CNRS, Institut Camille Jordan, 21 Av. Claude Bernard, 69100 Villeurbanne, France}\email{reboulet@math.univ-lyon1.fr}

\begin{document}

\maketitle

\begin{abstract}
We introduce a theory of uniform K-stability for big line bundles on smooth projective varieties. This extends the existing theory both  for varieties with ample line bundles, and for  varieties with big anticanonical class. Our main result gives a valuative characterisation of uniform K-stability, through finite collections of divisorial valuations. We further prove that uniform K-stability is preserved under pullbacks and certain pushforwards, which implies that uniform K-stability is well-defined at the level of $b$-divisors.
\end{abstract}

\tableofcontents

\section{Introduction}

The Yau--Tian--Donaldson conjecture, which states that the existence of a constant scalar curvature K\"ahler (cscK) metric on a smooth polarised variety can be characterised  algebro-geometrically, has been a major driving force of research in complex geometry over many decades. A solution to this conjecture has recently been provided by Boucksom--Jonsson \cite{bj:ytd} and Darvas--Zhang \cite{dar:zhangytd}, through the notion of \textit{uniform K-stability}\footnote{To simplify notation, we use the terminology of \emph{uniform K-stability} to refer to what is called \emph{uniform K-stability with respect to models} in the literature.}, building on  \cite{chencheng:iiiexistence,chili:ytd} and  extending the solution of Chen--Donaldson--Sun \cite{ytd:cds} for K\"ahler--Einstein metrics in the  Fano setting.

Beyond the connection with the existence of K\"ahler--Einstein metrics, the  theory of K-stability of Fano varieties has been successful due to its connection with the moduli theory of Fano varieties, and because K-stability can be explicitly verified for large classes of Fano varieties \cite{kmod:final,book:xukstab,book:threefolds}. In both of these applications, the main advantage in the Fano setting is that finite-generation properties hold (for filtrations, or section rings) with greater frequency than the more general setting. Furthermore, when finite-generation fails, there are typically ways of circumventing this by directly handling non-finitely generated objects.

One of the main points of the solution of the Yau--Tian--Donaldson conjecture was to use a stronger notion of stability, which is able to overcome this kind of lack of finite-generation. Said another way, while the Yau--Tian--Donaldson conjecture seeks a cscK metric in the first Chern class of an \emph{ample} line bundle, its solution nevertheless involves the theory of \textit{big} line bundles---whose section ring may not be finitely-generated---in a crucial way. 

The goal of this article is to develop a theory of uniform K-stability in a general setting in which finite-generation is not required for a working theory. More precisely, we consider a smooth projective variety $X$ along with a big line bundle $L$, and define a notion of \emph{uniform K-stability} of $(X,L)$. While ample line bundles capture the full geometry of a projective variety, the more flexible theory of big line bundles can be viewed as capturing the birational geometry of a projective variety; recall that a line bundle is said to be big if its volume (measuring the asymptotic growth of global sections of its tensor powers) is strictly positive, and that the section ring of such an $(X,L)$ may not be finitely-generated. In this way, our work can be viewed as producing a birational theory of uniform K-stability.

When $L=-K_X$ is the anticanonical class, as an analogue of the Fano setting, the authors and Darvas--Zhang previously introduced a notion of K-stability when $L=-K_X$ is big  \cite{derreb:1,dar:zhangbigke} (see also Trusiani \cite{trusiani:ytd} for related work). Via minimal model programme techniques, it was proven  in this case by Xu \cite{xu:big} that K-stability forces the anticanonical ring to be finitely generated, and that K-stability reduces to log K-stability of an associated log Fano pair, hence reducing the theory to the classical case. 

Our motivation to introduce a theory of uniform K-stability in the big setting is threefold. Firstly, as already explained, much of the difficulty behind the theory of K-stability in the ample case relates to the lack of finite-generation in various settings, and the theory of big line bundles has proven to be a powerful tool to circumvent these issues. From these developments, our perspective is that big line bundles, rather than ample line bundles, can be viewed as the natural home of the theory of K-stability, where the lack of finite-generation is built into the theory from the beginning.

Secondly, in the ample setting, uniform K-stability is conjectured to give rise to a moduli theory of polarised varieties. We ask whether there is a moduli theory for (birational equivalence classes of) projective varieties along with big line bundles, through uniform K-stability. The main obstacle in the ample setting is, again, related to finite-generation, and any results overcoming these difficulties are likely to apply also to the big setting. 

Thirdly, we expect the theory of constant scalar curvature K\"ahler metrics to extend to the big setting, by analogy with  recent important developments surrounding such metrics on singular varieties \cite{trusianipan,han-liu,to-pan}. We further expect  our notion of uniform K-stability to be equivalent to the existence of such metrics, suitably defined, extending the Yau--Tian--Donaldson conjecture beyond the ample cone.

\subsection*{Main results} We fix a smooth projective variety $X$ along with a big line bundle $L$. Our first results around uniform K-stability of $(X,L)$ are motivated by results in the ample case, which we briefly recall. 

When $L$ is ample, uniform K-stability is a numerical condition on $\C^*$-degenerations of $(X,L)$, which are called \emph{test configurations}. One then defines the \emph{non-Archimedean Mabuchi functional} (on the space of test configurations) by intersection theory, with \emph{uniform K-stability} asking for a uniform lower bound on the non-Archimedean Mabuchi functional, in terms of the norm of the test configuration. It was realised by Chi Li that it is beneficial to allow test configurations $(\X,\L)$, where the line bundle $\L$ is merely big \cite{chili:fujita,chili:ytd}, rather than ample, with intersection numbers being replaced by positive intersection numbers \cite{bfj:teissier}. This produces a version of K-stability which is called \emph{uniform K-stability with respect to models}, and which we abbreviate to simply uniform K-stability in our work (it being the notion that naturally extends to the big setting). A main point of Li's work is that uniform K-stability then recovers the \emph{a priori} stronger non-Archimedean approach of Boucksom--Jonsson \cite{bj:kstab1,bj:kstab2, bj:trivval}, and in particular is the notion of stability involved in the solution of the Yau--Tian--Donaldson conjecture.

In the Fano setting, the majority of the theory is reliant on a characterisation of K-stability through divisorial valuations, as opposed to test configurations. This characterisation, due to Fujita and Li \cite{fujita:valuativecriterion, chili:kstabequivariant}, employs powerful results from the minimal model programme due to Li--Xu \cite{lixu}, and has been fundamental to the purely algebro-geometric development of K-stability \cite{kmod:final,book:xukstab,book:threefolds}. 

In the general polarised case, this valuative picture was extended  by the first author and Legendre \cite{dervanlegendre}, Boucksom--Jonsson \cite{bj:kstab2,bj:ytd}, and Mesquita-Piccione--Witt Nystr\"om \cite{wn:pic}, through what is now known as \textit{divisorial stability}, where one tests stability through convex combinations of divisorial valuations on $X$ (which can be encapsulated into so-called \textit{divisorial measures}). In particular, Boucksom--Jonsson prove that divisorial stability is equivalent to uniform K-stability, generalising the aforementioned work of  Fujita and Li beyond the Fano setting. 

Turning to the case where $L$ is big, our theory of uniform K-stability is modelled on the work of Li: we consider $\C^*$-degenerations $\X$ of $X$ with $\L$ a big line bundle on $X$, which we call test configurations. To such a test configuration we associate numerical invariants, namely the \textit{non-Archimedean Mabuchi functional} and the \textit{norm}, defined by positive intersection theory. Uniform K-stability then requires a uniform lower bound on the non-Archimedean Mabuchi functional with respect to the norm, for all test configurations.

We further define a notion of divisorial stability, where one tests stability through convex combinations of divisorial valuations on $X$ (which can be encapsulated into so-called \textit{divisorial measures}). Our  main result proves the following characterisation in our setting, generalising the work of Boucksom--Jonsson\footnote{We emphasise that their work implies that divisorial stability is equivalent to uniform K-stability with respect to models, which we simply call uniform K-stability in our work.}.

\begin{theorem}\label{thm:intro-divisorial}
Suppose $X$ is a smooth projective variety, and $L$ is a big line bundle on $X$. Then $(X,L)$ is  uniformly K-stable if and only if it is divisorially stable.\end{theorem}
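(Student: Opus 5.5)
We follow Boucksom--Jonsson's proof in the ample case, making two replacements: energy functionals on the space of model functions become positive intersection numbers, and the ample envelope becomes the envelope relative to the big class $L$. Both uniform K-stability and divisorial stability are then comparisons between the same two functionals, $M^{\na}=H^{\na}+(\text{energy part})$ and the norm $\|\cdot\|$. The difference is the class of objects tested: test configurations $(\X,\L)$ with $\L$ big, versus divisorial measures $\mu=\sum_i c_i\delta_{v_i}$, where each $v_i$ is a divisorial valuation on $X$. The plan is to construct maps in both directions which control $M^{\na}$ and $\|\cdot\|$ in the correct direction.

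\emph{Uniform K-stability implies divisorial stability.} Fix a divisorial measure $\mu$ with finite energy. We take its Monge--Amp\`ere potential, i.e.\ the function $\phi_\mu$ solving the non-Archimedean Monge--Amp\`ere equation $\MA(\phi_\mu)=\mu$ in the big class. The entropy of $\mu$, defined as $\int A_X\,d\mu$, agrees with $H^{\na}(\phi_\mu)$, and the divisorial stability functional on $\mu$ is $M^{\na}(\phi_\mu)$. Since $\mu$ has finite support, we expect $\phi_\mu$ to be approximable by a decreasing sequence of functions coming from test configurations: blow up the ideals cut out by the centres of the $v_i$, and take positive parts of divisorial Zariski decompositions. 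Along this sequence:
\begin{enumerate}[(i)]
\item the energy terms converge by continuity of positive intersection numbers along decreasing sequences;
\item the entropy is lower semicontinuous, and for measures supported on finitely many divisorial points one should in fact obtain convergence, since the supports stay in a fixed finite set of divisors.
\end{enumerate}
The uniform bound $M^{\na}\ge \epsilon\|\cdot\|$ for test configurations therefore passes to $\mu$.

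\emph{Divisorial stability implies uniform K-stability.} Given a test configuration $(\X,\L)$, let $\phi$ be the associated model function. Its non-Archimedean Monge--Amp\`ere measure, defined via positive intersection products, is a divisorial measure $\mu_\phi=\sum_E c_E\,\delta_{v_E}$. The sum runs over the irreducible components $E$ of the central fibre that are not contained in the non-nef or non-K\"ahler locus of $\L$, and the weights are $c_E=b_E\,\langle \L^n\rangle|_E$. We then show
\[ M^{\na}(\phi)\ \ge\ M^{\na}(\mu_\phi)\quad\text{and}\quad \|\phi\|\ \le\ C\|\mu_\phi\|, \]
where $M^{\na}(\mu_\phi)$ and $\|\mu_\phi\|$ denote the divisorial stability functional and norm of $\mu_\phi$. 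The first inequality should follow from the variational characterisation: the energy term of $\mu$ is $\sup_\psi\bigl(E(\psi)-\int\psi\,d\mu\bigr)$, attained exactly at the potential $\phi_\mu$. The second follows from comparing $E^\vee(\mu_\phi)$ with the norm.

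The main obstacle is the orthogonality property. It is needed both for the differentiability of $E$ along the envelope (which gives $\MA(\phi)=\mu_\phi$ and the variational identity) and for the existence of potentials $\phi_\mu$. In the big setting this requires differentiability of the volume with respect to test configurations, i.e.\ the Boucksom--Favre--Jonsson derivative formula for $\vol$ in terms of restricted volumes. It also requires a non-Archimedean orthogonality $\int(\phi-P(\phi))\,\MA(P(\phi))=0$ for the $L$-envelope $P$. I would first prove orthogonality via the BFJ formula on the model $\X$, using that $\C^*$-equivariance reduces restricted volumes on components of the central fibre to the expected quantities. I would then deduce solvability of the Monge--Amp\`ere equation for divisorial measures by the variational method, maximising $E-\int\cdot\,d\mu$ over the finite energy class relative to $L$.
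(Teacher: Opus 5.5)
Your argument for ``divisorial stability $\Rightarrow$ uniform K-stability'' is essentially the paper's. Take $\mu=\MA(\X,\L)$. The entropies agree (Lemma \ref{lem:entropy}). Moreover $(\X,\L)$ maximises $t\mapsto E^{\mathrm{NA}}(\X,L+D_t)-\langle\xi,t\rangle$ (Theorem \ref{thm:variationalE}, which needs concavity of $E^{\mathrm{NA}}$ on $\Ver(\X)$, proved in Proposition \ref{prop:concave} by Fujita approximation). Comparing this function with $\|\mu\|_{L+sK_X}$ for $s<0$ gives $\nabla_{K_X}E^{\mathrm{NA}}(\X,\L)\ge\nabla^-_{K_X}\|\mu\|_L$. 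It is precisely the \emph{left} derivative in $\beta_L$ that makes this supporting-function inequality point the right way.

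The converse has a genuine gap. You treat $\phi_\mu$ as ``the'' potential and assert that the divisorial functional of $\mu$ equals $M^{\mathrm{NA}}(\phi_\mu)$. In the big setting the energy lacks the strict concavity behind uniqueness in the ample case, since the volume is only strictly log-concave on the movable cone. So solutions of $\MA(\X,\L)=\mu$ are neither known nor expected to be unique, and different solutions may have different values of $\nabla_{K_X}E^{\mathrm{NA}}$. Correspondingly, $L\mapsto\|\mu\|_L$ is only shown to be one-sidedly differentiable, and $\beta_L(\mu)$ is defined through $\nabla^-_{K_X}\|\mu\|_L$, not through a chosen solution. The supporting-function argument gives $\beta_L(\mu)\le M^{\mathrm{NA}}(\X,\L)$ for each solution, which is the wrong direction here. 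Uniform K-stability bounds $\inf_{\MA(\X,\L)=\mu}M^{\mathrm{NA}}(\X,\L)$ below by $\epsilon\|\mu\|_L$, and you need $\beta_L(\mu)$ to equal that infimum. The missing idea is the formula of Remark \ref{rem:nablamin},
\[
\nabla^-_{K_X}\|\mu\|_L=\inf_{\MA(\X,\L)=\mu}\Bigl(\frac{\langle\L^n\rangle\cdot \pi^*K_X}{\langle L^n\rangle}-\frac{n\langle L^{n-1}\rangle\cdot K_X}{\langle L^n\rangle}E^{\mathrm{NA}}(\X,\L)\Bigr).
\]
The paper proves it via Danskin's theorem (Theorem \ref{thm:danskin}) applied to $f(L',t)=E^{\mathrm{NA}}(\X,L'+D_t)-\langle\xi,t\rangle$ on a fixed $\mu$-compatible model. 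This requires the maximisers for all $L'$ near $L$ to lie in one compact set, which comes from the filtration description of Theorem \ref{thm:descrfiltr} and local Lipschitz continuity of $L'\mapsto\gamma(L',E_i)$. It also requires continuity of positive intersection products. One then takes a minimising sequence of solutions. You also need Theorem \ref{thm:bchen} (volume of the induced filtration equals $E^{\mathrm{NA}}$, via Fujita approximation). It identifies the valuatively defined $\|\mu\|_L$ with the Legendre transform of the energy that you take as a definition. Finally, the homogeneity of Proposition \ref{prop:derivativeUL} gives $\|\MA(\X,\L)\|_L=\|(\X,\L)\|$ exactly.

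Your route to $\phi_\mu$ is also not viable as stated. Maximising over a finite-energy class relative to $L$ presupposes a completeness that is not known for big classes, even big and nef ones. The decreasing approximation step fails on its own terms. Lower semicontinuity gives $H(\phi_\mu)\le\liminf_j H(\phi_j)$, which is the wrong direction for passing $M^{\mathrm{NA}}\ge\epsilon\|\cdot\|$ to the limit. And the supports of $\MA(\phi_j)$ need not stay in a fixed finite set, because the approximating models acquire new central-fibre components. None of this is needed. The Calabi--Yau theorem (Theorem \ref{thm:main}) produces a big $\L$ directly on a smooth $\mu$-compatible model, via Witt Nystr\"om's degree argument. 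So each solution is already a test configuration in the sense used to define uniform K-stability. Your variational idea can be salvaged on that fixed model, where the problem is finite-dimensional. The compactness argument from the proof of Theorem \ref{thm:diff} shows the supremum is attained, and maximisers are solutions by Theorem \ref{thm:variationalE}.
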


Many new technical issues arise in proving this result, in comparison with the ample case. The first step towards proving this result is to establish a non-Archimedean analogue of the Calabi--Yau theorem for big line bundles, as a big counterpart to ample results of Boucksom, Favre and Jonsson \cite{bfj:nama, bj:trivval}; we adapt a recent approach of Witt Nystr\"om \cite{wn:icm}, for which it is crucial that we assume $X$ is smooth. A major difference in our setting, however, is that we do not establish (and do not expect) uniqueness of solutions, due to a lack of strict convexity of the key Monge--Amp\`ere energy functional. This functional is defined through the volume of a big line bundle, for which one can only expect strict convexity properties on the big and nef (or movable) cone. Geometrically, this means we must permit a single divisorial measure to be associated with many test configurations.

The main consequence of this lack of uniqueness relates to the numerical invariants we attach to a divisorial measure. In the ample case, one essentially defines these invariants precisely so that the norm of a divisorial measure agrees with the norm of its associated test configuration, while the $\beta$-invariant of a divisorial measure is defined in such a way that it agrees with the non-Archimedean Mabuchi functional of its associated test configuration. It is therefore remarkable that it is possible to produce an intrinsic definition of the $\beta$-invariant in such a way that Theorem \ref{thm:intro-divisorial} holds. For this, we prove \emph{left} differentiability of the norm of a fixed measure as one varies the big line bundle, using convex optimisation results of Danskin \cite{danskin}; we do not expect full differentiability. 

We emphasise that our resulting notion of the $\beta$-invariant of a divisorial measure is completely explicit, involving the calculation only of integrated volumes of various collections of divisorial valuations, along with the log discrepancy, in line with the approach of \cite{bj:ytd, wn:pic} rather than the original approach of \cite{bj:kstab2}.

Another important aspect of our proof is to directly relate the Monge--Amp\`ere measure of a test configuration, the volume of an associated filtration of the section ring, and  the ``integrated volume'' of the associated divisorial measure, through a result of independent interest generalising equidistribution results in \cite{bouchen,bj:kstab1} to globally big test configurations; see  Theorem \ref{thm:bchen}. We use Theorem \ref{thm:intro-divisorial} to prove that uniform K-stability of $(X,-K_X)$ recovers the prior notions introduced in \cite{derreb:1,dar:zhangbigke}, implying that our notion of uniform K-stability is a natural extension of the existing theory in the anticanonical case.
 
Our second main result establishes the birational behaviour of uniform K-stability, for which we consider a birational contraction between smooth projective varieties $\pi:Y\dashrightarrow X$. We endow $X$ and $Y$ with big line bundles $L_X$ and $L_Y$ respectively such that on the pullback to a resolution of indeterminacy $Z$, we have $L_Y = L_X+E$, for $E$ an exceptional effective $\bbr$-Cartier divisor. 

\begin{theorem}\label{intro:birational}
$(X,L_X)$ is uniformly K-stable if and only if $(Y,L_Y)$ is uniformly K-stable. 
\end{theorem}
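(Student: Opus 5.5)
We will prove the theorem through the valuative characterisation of Theorem~\ref{thm:intro-divisorial}. The point is that divisorial valuations and divisorial measures are birational objects, while test configurations are not. Fix a smooth resolution of indeterminacy $Z$ with birational morphisms $p:Z\to X$ and $q:Z\to Y$, so that $q^*L_Y = p^*L_X + E$ with $E$ effective and $q$-exceptional (as in the statement). The argument splits into two comparisons:
\begin{itemize}
\item pullback: $(X,L_X)$ versus $(Z,p^*L_X)$;
\item pushforward: $(Z,q^*L_Y)=(Z,p^*L_X+E)$ versus $(Y,L_Y)$.
\end{itemize}
The theorem then follows by chaining the two equivalences.

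The key observation is that the function field, and hence the space of divisorial valuations (and of divisorial measures), is the same for $X$, $Y$ and $Z$. So for a divisorial measure $\mu$ I will compare the norm and the $\beta$-invariant computed on each side.

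\textbf{Volume-type quantities.} These are defined through integrated volumes of filtrations attached to collections of valuations, i.e. through $\vol(L - \sum t_i F_i)$-type expressions over models. We have $H^0(Z,m\,p^*L_X)=H^0(X,mL_X)$, and also $H^0(Z,m(p^*L_X+E))=H^0(Z,mp^*L_X)$, because $E$ is effective and exceptional, so it lies in the fixed part. The second equality is where the hypothesis on $E$ is used, via the negativity lemma or the fact that exceptional effective divisors are contained in the base locus. Consequently the valuative filtrations induced by any divisorial valuation $v$ agree on the two sides. The only correction is a shift in the jumping numbers by $v(E)$, which I expect to be built into the relevant normalisation; if not, I will track it as a translation. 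All integrated volumes therefore coincide, and so do the energy-type functionals and the norm of $\mu$.

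\textbf{The entropy term.} Here the log discrepancies differ: $A_Z(v)=A_X(v)-v(K_{Z/X})$. The relative canonical divisor does appear, and I have to check that its contribution is exactly compensated by the change in the Ricci (canonical) part of the $\beta$-invariant. That part involves the left derivative of the norm/energy in the direction of $K$. This compensation is the expected main obstacle, because it uses the Danskin-type left-derivative formula rather than a closed formula.

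\textbf{Plan for the compensation.} I would compute the derivative in the direction $K_Z=p^*K_X+K_{Z/X}$, writing $\vol(p^*L_X+tK_Z)$. For small $t$ the additional exceptional effective part $tK_{Z/X}$ again lies in the fixed part of sections for $t>0$. This gives
\[
\frac{d}{dt}\Big|_{t=0^+}\text{(energy of }\mu\text{ in direction }K_Z)=\frac{d}{dt}\Big|_{t=0^+}\text{(energy in direction }p^*K_X) + \text{(term }-\textstyle\int v(K_{Z/X})\,d\mu\text{)}.
\]
The last term should cancel the discrepancy shift. The one-sidedness is essential: only left derivatives exist, and the sign of the perturbation must be chosen so that the exceptional divisor is added as an effective class. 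To verify this I would use the description of the Monge--Amp\`ere/equidistribution measure from Theorem~\ref{thm:bchen} together with the support of the optimiser in Danskin's formula.

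With the norm and $\beta$ shown to be invariant on divisorial measures, divisorial stability is preserved in both steps. Theorem~\ref{thm:intro-divisorial} then converts this into uniform K-stability, giving the equivalence for $(X,L_X)$ and $(Y,L_Y)$.
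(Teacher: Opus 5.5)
Your route is valuative, whereas the paper compares test configurations directly: a Stoppa-type blowup of the $\mathbb{C}^*$-closure of the ideal (Proposition~\ref{prop:pullback1}) together with Lemma~\ref{lem:comparison}. There is a genuine gap in your pushforward step, and it sits exactly where you flagged a possible shift. Under $H^0(Z,kp^*L_X)\cong H^0(Z,k(p^*L_X+E))$, $s\mapsto s\cdot s_E^{k}$, each $v_i$ increases by $k\,v_i(E)$. Hence $\cF_{S,t}$ on $R(Z,p^*L_X+E)$ corresponds to $\cF_{S,t+v(E)}$ on $R(Z,p^*L_X)$, with $v(E)=(v_i(E))_i$. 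The normalisation only absorbs shifts by multiples of $(1,\dots,1)$, and $v(E)$ is not of that form (already $v_{\triv}(E)=0$). Tracking the shift through the Legendre transform therefore gives
\[
\|\mu\|_{p^*L_X+E}=\|\mu\|_{p^*L_X}+\sum_i \xi_i\,v_i(E),
\]
not equality.

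For example, take $Y=\Bl_pX$ and $L_Y=\pi^*L_X+F$. Since $\vol(\pi^*L_X+(1-\lambda)F)=\vol(L_X)$ for $0\le\lambda\le 1$, Remark~\ref{rem:expectedorder} gives $\|\delta_F\|_{L_Y}=S_{L_Y}(F)=S_{L_X}(F)+1$. The two norms are not even comparable. For $L_X$ ample on a surface and $\mu_s=(1-s)\delta_{\triv}+s\delta_F$, the norm $\|\mu_s\|_{L_X}$ is of order $s^{3/2}$, while the correction is $s$.

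So even granting invariance of $\beta$, you only get $\|\mu\|_{L_Y}\ge\|\mu\|_{L_X}$. That proves ``$(Y,L_Y)$ divisorially stable $\Rightarrow$ $(X,L_X)$ divisorially stable''. The converse amounts to a uniform estimate $\beta(\mu)\ge c\int v(E)\,d\mu$, and your proposal supplies nothing of this kind. One possible route would combine an lct/Izumi-type bound $A_X(v)\ge c\,v(E)$ with control of $\nabla^-_{K_X}\|\mu\|_{L_X}$ by $\|\mu\|_{L_X}$. The paper's test-configuration route does not avoid this either. In Proposition~\ref{prop:pullbacks}, the pullback of $L_Y$ to $\cY$ is $\tilde\pi^*L_X+E'+\sum_i b_i\,v_i(E)\,E_i$, summed over the components of $\cY_0$. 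Its vertical part adds exactly $\int v(E)\,d\MA(\cX,\cL_X)$ to $\nabla_{L_Y}E^{\na}$, consistent with Corollary~\ref{coro:norm}. So the norm identity (iii) there is subject to the same correction.

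Your compensation step also needs repair. The $\beta$-invariant uses the left derivative, that is $L+tK$ with $t<0$. There $tK_{Z/X}$ is anti-effective and not in the fixed part, so your computation at $t=0^+$ only yields $\nabla^+$. Both one-sided derivatives exist by Theorem~\ref{thm:diff}, but they need not agree and are not additive in the direction. You therefore cannot split $K_Z=p^*K_X+K_{Z/X}$ from the $t>0$ computation.

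The sign is also off as written. The correction must be $+\int v(K_{Z/X})\,d\mu$, since that is what cancels $A_Z=A_X-v(K_{Z/X})$; with a minus sign the discrepancy doubles. The correct argument goes through Remark~\ref{rem:nablamin}. On a $\mu$-compatible degeneration $\mathcal{Z}$ of $Z$, every solution $\cL$ of $\MA(\mathcal{Z},\cL)=\mu$ gets the contribution $\vol(L)^{-1}\langle\cL^n\rangle\cdot\pi^*K_{Z/X}=\int v(K_{Z/X})\,d\mu$ from the $K_{Z/X}$-direction. Here the horizontal strict transform has zero restricted volume, and $\langle L^{n-1}\rangle\cdot K_{Z/X}=0$ by \eqref{tess-2}. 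This contribution is independent of the solution, so it passes through the infimum. With this fix your pullback step is a valid valuative alternative to the paper's argument, which works directly with test configurations.

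Finally, $E$ should be exceptional over $X$, not $q$-exceptional. For instance, for $Y=\Bl_pX$ and $L_Y=\pi^*L_X+F$ one has $Z=Y$, $q=\mathrm{id}$ and $E=F$.
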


A special case to which this result applies is when $L_Y$ is the pullback of $L_X$ to $X$ through a morphism. Perhaps the most important conceptual consequence is in the case where $L_Y$ has finitely-generated section ring: in this case, one obtains a birational contraction $\pi: Y \dashrightarrow X$ and an ample line bundle $L_X$ on $X$ such that, on a resolution of indeterminacy, $L_Y = \pi^*L_X + E$ for $E$ effective. Thus, when $L_Y$ has finitely generated section ring---and its ample model $(X,L_X)$ satisfies the hypothesis that $X$ is smooth---uniform K-stability reduces to the ample setting. This is compatible with the minimal model programme and surrounding moduli theory: when $K_X$ is big, taking the canonical model produces a variety with ample canonical class, and Theorem \ref{intro:birational} implies that uniform K-stability is unchanged under passage to smooth canonical models (and indeed, in this case proving uniform K-stability of $(X,K_X)$ directly is straightforward). In this way, Theorem \ref{intro:birational} implies that we have produced the natural extension of uniform K-stability from the ample cone to the big cone. 

In line with the third point in our motivation above, Theorem \ref{intro:birational} is an algebro-geometric version of the statement that the existence of (a conjectural notion of) singular cscK metrics in big classes is well-behaved under pullbacks and pushforwards. As such, singular cscK metrics in big classes should be abundant, beginning for example  with a smooth cscK manifold $(X,L_X)$ with $L_X$ ample.

 Theorem \ref{intro:birational} further implies that uniform K-stability is a notion defined at the level of \textit{$b$-divisors} on the Riemann--Zariski space of $X$, as we explain in Section \ref{sec:b-div}. Many fundamental notions of positivity for classes are defined in terms of classes on the Riemann--Zariski space \cite{bfj:teissier,dangfavre:nefbdiv}, which provides the appropriate framework for (asymptotic) birational-geometric questions, and our work thus gives a theory of uniform K-stability that fits into this framework.

We end with a more philosophical comment on our approach to uniform K-stability in the big setting, which is motivated by considerations from non-Archimedean geometry. Boucksom--Jonsson \cite{bj:trivval} have given a general theory of plurisubharmonic  metrics in non-Archimedean geometry, for arbitrary pseudoeffective classes. Our work is strongly influenced by theirs, but does not use the same approach; our notion of a test configuration is distinct from their notion of a non-Archimedean Fubini--Study metric. 

More precisely, in the complex (Archimedean) theory,  a plurisubharmonic metric can be viewed as a decreasing limit of (in general singular) Fubini--Study metrics through Demailly approximation \cite{dem:reg}, which is taken as a definition in non-Archimedean pluripotential theory. Our approach  is instead motivated by the fact that plurisubharmonic envelopes of continuous functions are dense in the space of metrics with minimal singularities; our test configurations are analogues of such plurisubharmonic envelopes. In the Boucksom--Jonsson theory, a major issue is that the space of finite energy plurisubharmonic non-Archimedean metrics with minimal singularities is not known to be complete in general, even in the big and nef case (see \cite{darxiazhang} for related results using the formalism of \cite{rwn:analytictest}). The approach which we take means we do not require analogous completeness results to obtain a working theory, such as in our approach to the non-Archimedean Monge--Amp\`ere equation.

\bigskip\noindent\textbf{Acknowledgements.} The authors thank Sébastien Boucksom, Tamás Darvas, Jiyuan Han, Mattias Jonsson, Eveline Legendre, Yaxiong Liu,  Lars Martin Sektnan, Pietro Mesquita Piccione, David Witt Nyström, and Kewei Zhang for discussions related to this article and comments on an earlier version. RD was funded by a Royal Society University Research Fellowship (URF/R1/20104) for the duration of this work.

\section{K-stability for big line bundles} \label{sect:1}

We will be interested throughout in  a smooth complex projective variety $X$ of dimension $n$, with canonical class $K_X$. For a big line bundle $L$ on $X$, our aim is to define the pair of conditions that $(X,L)$ be uniformly K-stable and divisorially stable. In subsequent sections, we will prove the equivalence of the two conditions. 

\subsection{Preliminaries on positivity and volume}

We begin by recalling the basics of positivity theory in algebraic geometry, referring to Lazarsfeld and Boucksom--Favre--Jonsson for introductions \cite{book:laz1, book:laz2, bfj:teissier}. In the present section, we assume that $X$ is merely normal.
\bigskip

\subsubsection{Positivity of line bundles.} We recall that a line bundle $L$ on $X$ is \textit{big} if its volume
$$\vol(L):=\lim_{k\to\infty}\frac{h^0(X,kL)}{k^n/n!}$$
is strictly positive; it is a classical result that the limit involved in the definition exists. Equivalently, $L$ is big if and only if $L=A+E$ with $A$ ample and $E$ effective on $X$. Bigness can be generalised in the natural way to $\bbr$-divisors and is a numerical property. In particular, the \textit{big cone} in $N^1(X)$ is open \cite[Corollary 2.2.24]{book:laz1}. A line bundle $L$ is \textit{pseudoeffective} if its numerical class lies in the closure of the big cone. We say that $L\geq L'$ if $L-L'$ is pseudoeffective.

Given a line bundle $L$ on $X$, we define its \textit{stable base locus} $Bs(||L||)=\bigcap_{k>0} Bs|kL|;$ its \textit{augmented base locus} or \textit{non-nef locus} $B_+(L):=\bigcap_A Bs(||L-A||)$, with $A$ varying over all ample $\bbq$-divisors on $X$; and its \textit{restricted base locus} or \textit{non-ample locus} $B_-(L):=\bigcup_A Bs(||L+A||)$ 	with $A$ varying over all ample $\bbq$-divisors on $X$.

We have $B_-(L)\subseteq B(L)\subseteq B_+(L)$; furthermore, $L$ is pseudoeffective if and only if $B_-(L)\neq X$, and $L$ is big if and only if $B_+(L)\neq X$. Given a divisor $D$, we say a pseudoeffective line bundle $L$ is \textit{$D$-pseudoeffective} if $D$ is not contained in $B_-(L)$, and \textit{$D$-big} if $D$ is not contained in $B_+(L)$ \cite[Remark 4.2]{bfj:teissier}. Note that the $D$-pseudoeffective cone is the closure of the $D$-big cone, which conversely is the interior of the former.

Given a big line bundle $L$ and a divisor $E$ on $X$, we define the \textit{pseudoeffective threshold} of $L$ with respect to $E$ as
$$\gamma(L,E):=\sup\{\gamma\in\mathbb{R}_{>0},\,L-\gamma E\text{ is big}\}.$$

\subsubsection{Positive intersection theory.} For $L$ a big line bundle on $X$, one defines the \textit{positive intersection product} as in \cite[Definition 2.5]{bfj:teissier}:
$$\langle L^k\rangle:=\sup_D\, (\pi^*L-D)^k,$$
where the supremum (taken in the sense of the pseudoeffectivity partial order $\geq$ as above) ranges over all divisors $D$ in a birational model $\pi:Y\to X$ such that $\pi^*L-D$ is nef. Formally, $\langle L^k\rangle$ is viewed as an element of $N^k(\mathfrak X)$, the space of $p$-dimensional Weil classes on the Riemann--Zariski space $\mathfrak X$ of $X$ \cite[Definition 1.1]{bfj:teissier}, though we require very little of the general theory (though we refer to Section \ref{sec:b-div} for a more complete discussion).  We note $\langle L^k\rangle = \langle L\rangle^k$ in the resulting positive intersection theory, and we employ both notations throughout. 

The positive intersection product extends to the boundary of the pseudoeffective cone in a natural manner in \cite[Definition 2.10]{bfj:teissier}, and to zero outside the pseudoeffective cone. It is not a continuous extension in general, except in the case where $k=n$. 

Provided $L$ is big, its volume is computed as a positive intersection product:
\begin{equation}
\vol(L)=\langle L^n\rangle.
\end{equation}

\subsubsection{Restricted volumes and differentiability of volume} The \textit{restricted volume} of a line bundle $L$ on a subvariety $V\subset X$ is defined as
$$\vol_{X|V}(L):=\limsup_{k\to\infty}\frac{\dim \Ima(H^0(X,kL)\to H^0(V,kL|_{V}))}{k^{\dim V}/\dim V!}.$$
If $V=D$ is a prime divisor in $X$, and $L$ is big, then \cite[Theorem B]{bfj:teissier}
\begin{equation}
\vol_{X|D}(L)=\langle L^{n-1}\rangle\cdot D.
\end{equation}

In general, for an arbitrary line bundle $L$, one has the following inequalities:
\begin{equation}\label{eq:restrineq}
	0\leq \vol_{X|D}(L)\leq \langle L^{n-1}\rangle\cdot D\leq \langle L^{n-1}\rangle|_D\leq \langle L|_D^{n-1}\rangle,
\end{equation}
the last inequality following from \cite[Definition 4.4, Remark 4.5]{bfj:teissier}. By \cite[Theorems 4.14, 4.15]{bfj:teissier}, if $L$ is $D$-big, or big and $D$-pseudoeffective, then further
$$\vol_{X|D}(L)=\langle L^{n-1}\rangle|_{D}.$$

The volume function is known to be $C^1$ on the big cone (\cite[Theorem A]{bfj:teissier}, \cite{lazmus}): given $L$ big and any divisor $D$ on $X$,
\begin{equation}
	\frac{d}{dt}\bigg |_{t=0} \vol(L+tD)=n\langle L^{n-1}\rangle\cdot D.
\end{equation}
We note that it is not differentiable at the boundary of the pseudoeffective cone.

We will use the following property of the positive intersection product repeatedly. Let $\pi: Y \to X$ be a birational morphism between normal varieties, and let $E$ be an irreducible divisor on $X$ with proper transform $\pi_*^{-1}E$ on $Y$. Then for any big line bundle $L$ on $X$ \cite[Section 4.2]{bfj:teissier} \begin{equation}\label{tess-1}\langle L^{n-1}\rangle\cdot E = \langle \pi^*L^{n-1}\rangle \cdot \pi_*^{-1}E,\end{equation} and similarly for $F$ a $\pi$-exceptional divisor on $Y$, \begin{equation}\label{tess-2}\langle \pi^*L^{n-1}\rangle \cdot F = 0.\end{equation}

\subsubsection{Fujita approximation.} The positive intersection products $\langle L^n\rangle$ and $\langle L^{n-1}\rangle\cdot  D$ can be computed using an explicit limit, following \cite[Definition 2.6, Theorem 2.13]{elmnp:restricted}. Let $\mu_m:X_m\to X$ be the blowup of the base locus of $mL$, then we may choose $m\mi E_m$ to be a maximising sequence in the definitions of the positive products above, so that
\begin{equation}\label{eq:fujita}
\lim_{m\to\infty}(m\mi L_m)^n=\langle L^n\rangle,\quad \lim_{m\to\infty}(m\mi L_m)^{n-1}\cdot D=\langle L^{n-1}\rangle\cdot D.
\end{equation}
We will throughout refer to the data of $(X_m,L_m:=\pi^*(mL)-E_m)$ as \textit{the Fujita approximation} of $(X,L)$

\subsection{K-stability}\label{subsect:tc}

We fix a smooth projective variety $X$ along with a big line bundle $L$ on $X$, and next define uniform K-stability of $(X,L)$. While the results of our work  hold only when $X$ is smooth, the definitions in the present section make sense provided $X$ is normal.

\begin{definition}
Consider a normal projective variety $\X$ with a $\C^*$-action, a flat $\C^*$-equivariant morphism $\pi: \X \to \pr^1$ and a $\C^*$-equivariant isomorphism $\X - \X_0 \cong X \times \C$, and consider $\L$  an $\R$-line bundle on $\X$ such that $\L|_{\X - \X_0} \cong L$. We say that $(\X,\L)$ is a \emph{test configuration} for $(X,L)$, and that $\X$ is a \emph{degeneration} of $X$.\end{definition}

\begin{example} If $\X = X \times \pr^1$ with $\L = p_X^*L$ the pullback to $\X$, we say $(\X,\L)$ is the \emph{trivial} test configuration.
\end{example}

An important class of test configurations are those that are \emph{dominant}, which means that the natural $\C^*$-equivariant rational map $\psi: \X \dashrightarrow X \times \pr^1$ extends to a morphism $\X \to X \times \pr^1$. For an arbitrary test configuration, by passing to a $\C^*$-equivariant resolution of indeterminacy $q: \Y \to \X$ of $\psi$, one obtains a new test configuration $(\Y,q^*\L)$.  For a dominant test configuration $(\X,\L)$, we will omit pullbacks of line bundles to $\X$ from $X$ and $\pr^1$ in our notation.

\begin{remark}\label{rmk:big}Given a test configuration $(\X,\L)$, we  show $\L+\scO_{\pr^1}(k)$ is big for any sufficiently large $k>0$. As the property of being big is a birational invariant, we may assume $(\X,\L)$ is dominant and that $\X \to X\times\pr^1$ admits a relatively ample, exceptional line bundle $E$, so that  we have $\L = L + D$ for a unique vertical Cartier divisor $D$ on $\cX$. Note that, after adding $\scO_{\pr^1}(k)$ for some $k$,  we may assume $\L - L - \scO_{\pr^1}(1)$ is effective. Since $L$ is big, we may write $L=A+F$ for $A$ ample and $F$ effective, so that $$\L = (A+ \scO_{\pr^1}(1) - \epsilon E)+(\epsilon E+(\L-L) + F - \scO_{\pr^1}(1))$$ is the sum of an ample class on $\X$ (namely $A+ \scO_{\pr^1}(1) - \epsilon E$) and an effective class (namely $(\epsilon E+(\L-L)+F - \scO_{\pr^1}(1))$). Thus, up to a twist by  $\scO_{\pr^1}(k)$ with $k$  sufficiently large, any test configuration can be made so that $\cL$ is globally big on $\cX$, and the same holds for any $k'>k$. We will associate numerical invariants to test configurations $(\X,\L)$ by first twisting them in this way so that they become big, many of which will be independent of choice of $k$; for the others, we will determine their dependence on $k$.
\end{remark}

We turn to numerical invariants, for which we assume $(\X,\L)$ is dominant; the various invariants will be independent of choice of resolution of indeterminacy for general test configurations.

Consider a dominant test configuration $(\X,\L)$. We then obtain a natural divisor $D$ by setting $\L - L = D$ for $D$ supported in $\X_0$, so that $D$ is vertical. The following results essentially appear in work of Trusiani  \cite[Lemma 2.15]{trusiani:ytd}, and we provide a proof adapted to our setting.

\begin{proposition}\label{trusiani} 
Suppose $\L - L = D$ is effective.  Then:
\begin{enumerate}[(i)]
\item for all $m \geq 0$, the restriction of the base locus $\Bs|m\L|$ to $\X - \X_0$ is $\Bs|mL| \times \C$;
\item $\vol(\L + \scO_{\pr^1}(k)) = \vol(\L) + k(n+1)\vol(L).$
\end{enumerate}
\end{proposition}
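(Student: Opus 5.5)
The plan is to reduce both statements to elementary facts about sections, using that $D$ is vertical and effective. Its canonical section $\sigma_D$ is then nowhere vanishing on $\X-\X_0\cong X\times\C$, where $\C=\pr^1-\{0\}$ contains the point $\infty$. Throughout we work with $m$ divisible enough that $m\L$ is an honest line bundle; if $\L$ is only an $\R$-line bundle, we first prove (ii) for $\Q$-line bundles and then pass to the limit, using homogeneity and continuity of the volume on the big cone (more precisely on $N^1$).

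\emph{Part (i).} We prove two inclusions. For ``$\subseteq$'': given $s\in H^0(X,mL)$, the section $p_X^*s\cdot\sigma_D^m$ lies in $H^0(\X,m\L)$. Its zero locus on $\X-\X_0$ is exactly $Z(s)\times\C$. Intersecting over all $s$ gives
$$\Bs|m\L|\cap(\X-\X_0)\subseteq \Bs|mL|\times\C.$$
For ``$\supseteq$'': any $S\in H^0(\X,m\L)$ restricts to a section of $p_X^*(mL)$ on $X\times\C$. Since $\C\cong\mathbb A^1$, flat base change gives
$$H^0(X\times\C,p_X^*mL)=H^0(X,mL)\otimes\C[u],$$
so $S|_{X\times\C}=\sum_j s_j u^j$ with $s_j\in H^0(X,mL)$. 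Hence $S$ vanishes along $\Bs|mL|\times\C$, which gives the reverse inclusion.

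\emph{Part (ii).} We compare $h^0$ step by step by twisting at $\infty$. For $j\geq1$, use the exact sequence
$$0\to m\L+\scO_{\pr^1}(j-1)\to m\L+\scO_{\pr^1}(j)\to (m\L+\scO(j))|_{\X_\infty}\cong mL\to0,$$
where $\X_\infty\cong X$ is the fibre over $\infty$, on which $D$ is trivial. The key claim is that the restriction map on $H^0$ onto $H^0(X,mL)$ is surjective. Given $s\in H^0(X,mL)$, take $p_X^*s\cdot\sigma_D^m\cdot\tau$, where $\tau\in H^0(\pr^1,\scO(j))$ does not vanish at $\infty$. This is a global section, because $D$ is effective, and it restricts to $s$ (up to a nonzero constant) on $\X_\infty$. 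It follows that
$$h^0(m\L+\scO(j))=h^0(m\L+\scO(j-1))+h^0(X,mL).$$
Applying this for $j=1,\dots,mk$ yields
$$h^0\big(m(\L+\scO_{\pr^1}(k))\big)=h^0(m\L)+mk\,h^0(X,mL).$$

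To conclude, divide by $m^{n+1}/(n+1)!$ and let $m\to\infty$. The last term gives
$$\frac{(n+1)!\,mk\,h^0(mL)}{m^{n+1}}=k(n+1)\cdot\frac{h^0(mL)}{m^n/n!}\longrightarrow k(n+1)\vol(L).$$
The limits exist by the classical existence of volume limits. The step I expect to need the most care is the surjectivity at $\infty$. It is precisely where effectivity of $D$ enters: without it, $\sigma_D^m$ would only be a meromorphic section. A secondary technical point is the reduction from $\R$-line bundles to the integral case.
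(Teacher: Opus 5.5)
Your proof is correct. Part (i) is essentially the paper's argument: the inclusion $\Bs|m\L|\cap(\X-\X_0)\subseteq\Bs|mL|\times\C$ comes from the sections $p_X^*s\cdot\sigma_D^m$, and the reverse inclusion comes from restricting sections of $m\L$ away from $\X_0$. The paper restricts to a single fibre $\X_t$, which is a little quicker than your K\"unneth decomposition but amounts to the same thing.

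For (ii) you take a genuinely different route.
\begin{itemize}
\item The paper uses the extension trick only to show that the restricted volume $\vol_{\X|X}(\L)$ equals $\vol(L)$. It then appeals to differentiability of the volume on the big cone, identifying the derivative of $s\mapsto\vol(\L+s\scO_{\pr^1}(1))$ with $(n+1)$ times this restricted volume, and concludes that the function is affine.
\item You instead feed the same surjectivity of restriction, now onto $\X_\infty$, into the twisting exact sequence. This gives the exact identity $h^0(m\L+\scO_{\pr^1}(mk))=h^0(m\L)+mk\,h^0(X,mL)$ for every $m$, after which (ii) is just a limit.
\end{itemize}

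Your route is more elementary. It uses no positive intersection theory and no differentiability of the volume, and it does not need $\L$ itself to be big. In the paper's argument one has to differentiate along $\L+s\scO_{\pr^1}(1)$ for $s>0$, where bigness holds, and then use continuity at $s=0$; this matters, for example, for the trivial test configuration, where $\L$ is not big. Your argument also yields an equality of $h^0$'s at each level rather than only of volumes.

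What the paper's route gives directly is the intermediate identity $\vol_{\X|X}(\L)=\vol(L)$, i.e.\ $\langle\L^n\rangle\cdot\X_1=\vol(L)$ for big $\L$. This is reused in the proof of Lemma \ref{asymptotic-MA}. From your statement it can be recovered a posteriori by differentiating the affine function $s\mapsto\vol(\L+s\scO_{\pr^1}(1))$ at a big point.

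Your reduction from $\R$- to $\Q$-coefficients via continuity of the volume on $N^1$ is a detail the paper leaves implicit, and it is fine.
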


\begin{proof}

To prove the first claim, suppose $x \notin \Bs(m\L)$ for $x \in \X_t$ with $t\neq 0$. Then there exists a $s \in H^0(\X,m\L)$ with $s(x)\neq 0$; since $m\L|_{\X_t}\cong mL$, so restriction produces a section $s'\in H^0(X,mL)$ with $s'(x) \neq 0$, and so $x \notin \Bs|mL| \times \C$. Here we use that the pullback of the base locus of $mL$ under the morphism $X \times \pr^1 - X\times\{0\} \to X$ is simply the preimage of the base locus of $mL$.

Conversely, for $t\neq 0$ if $x\in \X_t$ is not contained in $\Bs|mL|$ (more precisely, $(\Bs|mL|)|_{\X_t}$), then we may choose a section $s\in H^0(X,mL)$ with $s(x) \neq 0$. This section extends to a section $s'$ of the pullback of $mL$ to $\X$, and since $m\L - mL = mD = V(s_D)$ is effective by hypothesis, we obtain a section of $H^0(\X,m\L)$ by considering $s'\otimes s_D$ which satisfies $ (s'\otimes s_D)(x) \neq 0$ as $D$ is vertical. Thus $x$ is not contained in the restriction of the base locus of $m\L$ to $\X - \X_0$, which implies the result.

To prove the second claim, we claim $(i)$ implies $\vol_{\X|X}(\L)$ agrees with the volume of $L$. Indeed, in general, $\vol_{\X|X}(\L) \leq \vol(\L|_{\X_t}) = \vol(L)$, while the previous argument produces, for any section $s\in H^0(X,mL)$ a section $s' \in H^0(\X,m\L)$ whose restriction is $s$, meaning $\vol(L) \leq \vol_{\X|X}(\L).$ This is enough to imply the result by differentiability of the volume, since \begin{align*}\frac{d}{dt}\big|_{t=0} \vol(\L + (1+t)\scO_{\pr^1}(k)) &= (n+1)\vol_{\X|X}(\L), \\ &= (n+1)\vol(L);\end{align*} this implies the function of $t$  given by $t \to \vol(\L + (1+t)\scO_{\pr^1}(k))$ is affine of slope $(n+1)\vol(L)$, and equals $\vol(\L)$ at $t=0$, which gives the result.\end{proof}

By replacing $\L$ with $\L+\scO_{\pr^1}(k)$, we can ensure $\L - L$ is effective. This result allows us to make the following definition.

\begin{definition}

The \emph{Monge--Amp\`ere energy} of $(\X,\L)$ is given by $$E\NA(\X,\L) = \frac{\langle\L + \scO_{\pr^1}(k)\rangle^{n+1}}{(n+1)\langle L^n\rangle} - k$$ for $k \gg 0$.   

\end{definition}

Let $(\X,\L)$ be a test configuration, and assume that $\X$ is normal. As $\X$ is normal, we may  make sense of its canonical class $K_{\X}$ as a Weil divisor, and we define $$K^{\log}_{\X} = K_{\X} + (\X_0 - \X_{0,\red}),$$ where $\X_{0,\red}\subset \X_0$ is the reduced subscheme. We denote the canonical class of $X$ by $K_X$, which is a $\Q$-line bundle as $X$ is assumed $\Q$-Gorenstein.

\begin{definition} Associated to a test configuration $(\X,\L)$ with  normal total space, we define:
\begin{enumerate}[(i)]
\item its \emph{entropy} by $$H\NA(\X,\L) = \vol(L)\mi\left(\langle \L^n\rangle \cdot K^{\log}_{\X/X\times\pr^1}\right);$$
\item the \emph{non-Archimedean Mabuchi functional} by $$M\NA(\X,\L) = H\NA(\X,\L) + \nabla_{K_X} E\NA(\X,\L);$$
\item its \emph{norm} by $$\|(\X,\L)\| = \nabla_L E\NA(\X,\L).$$
\end{enumerate}

\end{definition}

To elaborate upon the definition of the entropy, $\langle \L^n\rangle \cdot K^{\log}_{\X/X\times\pr^1}$ denotes the positive intersection product of $\langle \L^n\rangle$ with the class $K_{\X} + (\X_0 - \X_{0,\red}) - K_{X\times\pr^1}.$ The directional derivatives involved in the definitions are given by, for example, $$\nabla_{K_X} E\NA(\X,\L) = \frac{d}{dt}\bigg |_{t=0}E\NA(\L+\scO_{\pr^1}(k)+tK_X),$$ and can be explicitly written as positive intersection products, by differentiability of the volume.

We may now define the first main stability condition of interest to us.

\begin{definition} We say that $(X,L)$ is \emph{uniformly K-stable} if there exists an $\epsilon>0$ such that for all test configurations $(\X,L)$ we have $$M\NA(\X,\L) \geq \epsilon \|(\X,\L)\|.$$ We call the maximal such $\epsilon$ the \emph{modulus of stability} of $(X,L)$.\end{definition}

\begin{remark}
Suppose $L$ is ample. Then, what we call uniform K-stability is usually called ``uniform K-stability with respect to models'' \cite{chili:ytd,chili:fujita,bj:kstab2,bj:ytd}, where a \textit{model} is a test configuration as we have defined one \cite{chili:ytd,chili:fujita}, while uniform signifies the uniform lower bound on $M\NA(\X,\L)$ in terms of the norm \cite{bhj:duistermaat,RD-uniform}.
\end{remark}

We note the following properties of uniform K-stability, which are standard in the ample setting (see e.g.\ \cite{ADVLN} or \cite[Proposition 7.14]{bhj:duistermaat}). 

\begin{lemma}\label{lem:can-assume-snc} To test uniform K-stability, we may restrict to test configurations with reduced, simple normal crossings central fibre and smooth total space.  \end{lemma}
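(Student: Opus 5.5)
The plan follows the ample-case argument of Boucksom--Hisamoto--Jonsson, in two steps: first make the total space smooth with snc central fibre, then make that central fibre reduced. At each step we check two things. The norm must transform in a controlled way, and the Mabuchi functional must not increase, up to the same scaling.

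\emph{Step 1: smooth total space with snc central fibre.} Let $(\X,\L)$ be a test configuration; we may assume it is dominant. Take a $\C^*$-equivariant log resolution $\mu:\X'\to\X$ of $(\X,\X_{0,\red})$, and set $\L'=\mu^*\L$.
\begin{itemize}
\item Positive intersection products are birational invariants of big classes, because they are defined via $b$-divisors. Hence $\langle(\L'+\scO_{\pr^1}(k))^{n+1}\rangle$ and its directional derivatives agree with those of $\L$, so $E\NA$, $\nabla_{K_X}E\NA$ and the norm are unchanged.
\item For the entropy, write $K^{\log}_{\X'/X\times\pr^1}=\mu^*K^{\log}_{\X/X\times\pr^1}+F$, where $F$ is $\mu$-exceptional. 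Since $(\X,\X_{0,\red})$ may fail to be log canonical, the coefficients of $F$ are not obviously nonnegative.
\item The remedy is property (tess-2): $\langle\mu^*\L^n\rangle\cdot F=0$ for any $\mu$-exceptional $F$. It is applied in dimension $n+1$, after the twist making $\L$ big, which does not change the positive product paired with a vertical divisor. So the entropy is unchanged as well, and $M\NA$ is preserved exactly.
\end{itemize}

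\emph{Step 2: reduced central fibre.} Perform a base change $z\mapsto z^d$ for $d$ divisible enough, take the normalisation, and then resolve again as in Step 1. This gives $(\X_d,\L_d)$ with reduced snc central fibre, by Mumford's semistable reduction (the toroidal version, which can be made $\C^*$-equivariant).
\begin{itemize}
\item Let $g:\X_d\to\X$ be the induced finite-then-birational map. Away from $\X_0$ the map $g$ is étale, so pullback multiplies the relevant volumes of vertical data by $d$: $E\NA(\X_d,\L_d)=dE\NA(\X,\L)$, and likewise the norm is multiplied by $d$. This uses projection formulas for positive products under finite maps, which follow from the Fujita-approximation description (eq:fujita) since finite pullback commutes with base loci up to normalisation.
\item For the entropy, the standard computation gives $K^{\log}_{\X_d/X\times\pr^1}\le g^*K^{\log}_{\X/X\times\pr^1}$. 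Equality holds on the normalised base change, because $K^{\log}$ is exactly the log-canonical class adapted to ramification along $\X_0$. Step 1 then shows that the further resolution does not change the entropy.
\item Combining these, $M\NA(\X_d,\L_d)\le dM\NA(\X,\L)$, which is all we need.
\end{itemize}

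\emph{Conclusion.} Suppose $M\NA\ge\epsilon\|\cdot\|$ holds on the restricted class. Then for an arbitrary test configuration,
\[ dM\NA(\X,\L)\ge M\NA(\X_d,\L_d)\ge\epsilon\|(\X_d,\L_d)\|=d\epsilon\|(\X,\L)\|, \]
so the bound holds for all test configurations. The converse direction is trivial.

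\emph{Main obstacle.} The hardest step is the scaling behaviour under finite base change for \emph{positive} intersection products of merely big classes. This is where the big setting differs from the ample one. I would first try to prove it using Fujita approximations: base change the blowups of $\Bs|m\L|$, observe that ordinary intersection numbers scale by $d$, and pass to the limit. The danger is that the normalisation may change base loci. I would handle this by noting that Proposition \ref{trusiani} (i) identifies the base loci off $\X_0$, and that positive products with vertical divisors are insensitive to the precise central-fibre structure after resolution.
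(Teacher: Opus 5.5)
Your proposal is correct and follows essentially the same route as the paper: semistable reduction followed by resolution, with the exceptional part of $K^{\log}$ discarded via \eqref{tess-2} and the remaining term controlled by the base-change behaviour of the log canonical class, as in Boucksom--Hisamoto--Jonsson. Your tracking of the degree $d$ of the base change is in fact more accurate than the paper, which asserts that the invariants are literally unchanged: you show that both $M\NA$ and the norm are multiplied by $d$, which is harmless by homogeneity of the stability inequality.
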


\begin{proof}
By semistable reduction, associated to a test configuration $(\X,\L)$ is a new test configuration $(\Y,\pi^*\L)$ where $\pi: \Y \to \X$ is a generically finite, birational morphism, such that $\Y_0$ is reduced and simple normal crossings, covering a finite morphism $\pr^1\to \pr^1$ branched only at the origin. As Weil divisors, we may write $K_{\Y}^{\log} = \pi^*K_{\X}^{\log} + \E,$ where $\E$ is $\pi$-exceptional.  By Equation \eqref{tess-2} it follows that $\langle\pi^*\L\rangle\cdot\E=0$, while $\langle \pi^*\L \rangle^n\cdot \pi^*K_{\X}^{\log} = \langle \L\rangle^n\cdot K_{\X}^{\log}$ by properties of the log canonical class under base change (see \cite[Proposition 7.14]{bhj:duistermaat}, \cite[Section 3]{lixu}), and for example $\langle \pi^*\L\rangle^{n+1} = \langle \L\rangle^{n+1}.$ It is then straightforward to see $$M\NA(\X,\L) = M\NA(\Y,\pi^*\L).$$ Since $X$ is assumed smooth, we may further assume $\Y$ is a resolution of singularities of $\X$, and the same argument applies. 
\end{proof}

\begin{remark}
As the positive intersection product is numerical, and the collection of test objects is independent of $L$ (simply being degenerations along with vertical $\R$-Cartier divisors), it follows that uniform K-stability is a numerical condition.
\end{remark}

\subsection{Divisorial stability}\label{subsect:divstab}

We consider a finite collection of divisors $F_i$ which are rational positive multiples of a prime divisor over $X$, which means that for each $i$, $F_i=c_i\cdot E_i$ with $c_i\in\bbq_{>0}$, and $E_i$ is realised as a prime divisor on a birational model $Y_i \to X$ of $X$; we will typically view the $F_i$  through their associated divisorial valuations. We also conventionally set $F_\triv$, which corresponds to the \textit{trivial valuation} $\nu_\triv$ on the function field of $X$.

\begin{definition} A \emph{divisorial measure} is a collection $ \mu=\{(F_i,\xi_i)\}$ for $i\in I = \{0,\ldots, l\}$ a finite set, with $\xi_i\in [0,1]$ satisfying the normalisation condition $\sum_{i=0}^l \xi_i = 1.$ We call $\Sigma = \{F_i\}_{i\in I}$ the \textit{support} $\supp(\mu)$ of $\mu$. \end{definition}

We consider divisorial measures to be the ``test objects'', and wish to associate numerical invariants to a given divisorial measure. The terminology behind divisorial measures will be justified in Remark \ref{rmk:why-na}; to define divisorial stability, it suffices for now to view them as weighted collections of divisorial valuations.

The relevant numerical invariants will be defined through a Legendre transform, so we consider an additional collection of real numbers $(t_i)_i\in\R^{|I|}$.

\begin{definition}\label{def:SL} Let $\Sigma=(F_i)_{i\in |I|}$ be a finite collection of divisors over $X$ as above. We define the \emph{expected vanishing order} of $L$ along $(\Sigma,t)$ to be  $$S_{L,\Sigma}(t) = t_0 + \frac{1}{\vol(L)}\int^{\infty}_{t_0}\vol\left(X,L-\sum_{i=0}^l\max(\lambda-t_i,0)F_i\right)\,d\lambda,$$ where $t_0$ is by convention the smallest of the $t_i$. When the support $\Sigma$ is clear from context, we write this as $S_L(t)$ or $S_L(\{F_i,t_i\})$.
\end{definition}

This is well-defined independently of the choice of such $t_0$, as for $t<\min_i t_i$, the integrand is constant and equal to $\vol(L)$. Furthermore, the integrand is zero as soon as $t\geq \min_i\{\gamma(L,E_i)+t_i\}$, and $t_0$ can be replaced with any real number smaller than $t_0$ leaving the quantity unchanged.

\begin{definition}\label{def:norm-def}
Consider a divisorial measure $\mu=\{(F_i,\xi_i)\}$, and let $\Sigma$ be its support. We define its \textit{norm} to be the Legendre transform
$$\|\mu\|_L := \sup_{t\in \R^{|I|}}\{-\langle \xi, t\rangle + S_{L,\Sigma}(t)\}.$$
\end{definition}

Note that, for $c\in \bbr$,
$$-\langle \xi, t+c\rangle + S_{L,\Sigma}(t+c)=-\langle \xi,t\rangle + S_{L,\Sigma}(t),$$
using that $c\sum_i \xi_i=c$, a change of variables, and the fact that the integral $S_{L,\Sigma}(t+c)$ can be computed on $[s+c,\min_i\{\gamma(L,E_i)+t_i\}+c]$ for any $s\leq \min_i t_i$. We will thus often take the supremum over all $t$ normalised so that $\min_i t_i=0$, in which case the relevant integral takes the simpler form $$S_{L,\Sigma}(t) = \frac{1}{\vol(L)}\int^{\infty}_{0}\vol\left(X,L-\sum_{i=0}^l\max(\lambda-t_i,0)F_i\right)\,d\lambda,$$

\begin{remark}\label{rem:expectedorder}Suppose $\mu$ is divisorial with support a single valuation, hence of the form $\{(F,1)\}$. Then, from the normalisation discussion above, $\|\mu\|_L$ is genuinely computed at $t=0$, hence
	$$\|\mu\|_L=S_L(F)$$
	recovers the usual expected vanishing order \cite{fujita:valuativecriterion, chili:kstabequivariant}.
\end{remark}

We are interested in differentiating $U$ with respect to $L$, in the direction of $K_X$, for fixed $\xi$. Given another line bundle $M$ on $X$, we denote by $$\nabla^-_{M}\|\mu\|_L:=\lim_{t\to 0^-}\frac{\|\mu\|_{L+tM}-\|\mu\|_L}{t}$$ the (left) directional derivative in the direction of $M$. It is clear that $\nabla^-_M \|\mu\|_L=-\nabla^+_{-M}\|\mu\|_L$ with $\nabla^+$ the standard (i.e.\ right) directional derivative. We will prove existence of directional derivatives of $\|\mu\|_L$ in Theorem \ref{thm:diff}, which will use our assumption that $X$ is smooth. In particular, to compute $\nabla_{K_X}^{-} \|\mu\|_L$  requires only the calculation of $S_L$ over $t\in\R^{|I|}$ and (left) differentiation.

\begin{definition}\label{def:div-stab}
Suppose $X$ is smooth. For a divisorial measure $\mu=\{(F_i,\xi_i)\}$, we define its \emph{$\beta$-invariant} by  $$\beta_L(\mu) = \sum_i \xi_iA_X(F_i) + \nabla_{K_X}^- \|\mu\|_L.$$ We then say that $(X,L)$ is \emph{divisorially stable} if there exists an $\epsilon>0$ such that for all divisorial measures   $\mu=\{(F_i,\xi_i)\}$ we have $$\beta_L(\mu)  \geq \epsilon \|\mu\|_L.$$
\end{definition}

Here we denote by $A_X$ the log discrepancy function on $X$, which for $\pi:Y\to X$ a birational morphism with $Y$ normal, $\Q$-Gorenstein and $F\subset Y$ a prime divisor takes value $A_X(F) = \ord_F(K_Y-\pi^*K_X)+1$. As $X$ is smooth, $A_X(F)\geq 0$ for all divisorial valuations $F$ over $X$.

\begin{example} Suppose the divisorial measure is given by a single divisorial valuation. Then $$\beta(F) = A_X(F) + \nabla_{K_X} S_L(F),$$ which is the quantity introduced by the first author and Legendre \cite{dervanlegendre} in the case where $L$ is ample. In the case that further $L=-K_X$, this is the usual $\beta$-invariant involved in the theory of K-stability \cite{fujita:valuativecriterion, chili:kstabequivariant}, and was first considered by Darvas--Zhang \cite{dar:zhangbigke} when $-K_X$ is merely assumed big, rather than ample. We will show in Theorem \ref{thm:fano} that stability in the sense of \cite{dar:zhangbigke} is equivalent to divisorial stability in the sense of Definition \ref{def:div-stab} when $L=-K_X$ is big. 

In the case where $L$ is ample, the norm $\|\mu\|_L$ is genuinely differentiable in $L$, and this is the quantity introduced by Boucksom--Jonsson \cite{bj:kstab2}, where the more explicit expression we build on appears in \cite[p.\ 47]{chili:slides}, \cite[Section 10]{wn:pic}, \cite[Proposition 4.7]{bj:ytd}.
\end{example}

\section{A Calabi--Yau theorem}\label{sect:2}

We assume throughout this section that $X$ is smooth.

\subsection{The Monge--Amp\`ere measure of a test configuration}

To a test configuration $(\X,\L)$, we may associate a divisorial measure $\mu = \{E_i,\xi_i\}$ essentially by letting $E_i$ be the components of $\X_0$ and letting $\xi_i$ be a normalised version of $\langle \L^n\rangle\cdot E_i$. Our interest in the present section is  the inverse question: given a divisorial measure $\mu$, can we produce a test configuration whose associated measure is $\mu$? Concretely, can we prescribe the restricted volumes of a big line bundle along the central fibre of a test configuration? This question can be phrased through non-Archimedean geometry, where it corresponds to solving a non-Archimedean Calabi--Yau theorem.

We fix a degeneration $\X$ of $X$, and consider some general theory relating the components of $\X_0$ to divisorial valuations. Denote by $\X_0 = \sum_i b_iE_i$ the central fibre of $\X_0$, with the $E_i$ reduced and irreducible. The $E_i$ may be viewed as divisorial valuations on $X$ \cite[Theorem 4.6]{bhj:duistermaat}: one may assume that $\X$ is dominant, through the behaviour of divisorial valuations under birational morphisms. A rational function on $X$ thus induces one on $\X$ by pullback, and one simply takes the order of this rational function along the corresponding component of $\X$.

In the following statement and throughout, it will be useful to denote the set of \emph{divisorial valuations} on $X$ by $X^\div$ (where conventionally we include the trivial valuation as a divisorial valuation); this can be naturally given the structure of a topological space, but we have no need for this further structure. Writing $\X_0 = \sum b_i E_i$, we define the set of \textit{Rees valuations} of $\X$ to be the divisorial valuations $b_i\mi \ord_{E_i} \in X^\div$ on $X$. We make the following definition.

\begin{definition}
	We say a degeneration $\cX$ of $X$ is \textit{$\mu$-compatible} if $\supp(\mu)\subseteq \mathrm{Rees}(\cX)$ as subsets of  $X^\div$, with each $E_i$ associated to a $\Q$-Cartier divisor on $\X$. 
\end{definition}

As a starting point towards the Calabi--Yau theorem, given a divisorial measure $ \{E_i,\xi_i\}$, we note that we may at least find an $\X$ whose components of the central fibre contains the $E_i$, namely $\supp(\mu)\subseteq \mathrm{Rees}(\cX)$.

\begin{lemma}\label{lem:rees}\cite[Lemma 2.12]{bj:trivval}
	For any finite subset $S\subset X^\div$, there exists a dominant degeneration $\cX$ of $X$ such that $S$ is contained in the set of Rees valuations of $\cX$
\end{lemma}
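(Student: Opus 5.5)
The plan is to construct $\cX$ explicitly: realise each valuation in $S$ as a divisor over $X$, and make it appear as a component of the central fibre of a deformation to the normal cone. Write $S=\{v_1,\dots,v_m\}$, discarding the trivial valuation. The trivial valuation is the Rees valuation $\ord_{X\times\{0\}}$ of the central fibre of any dominant degeneration containing the strict transform of $X\times\{0\}$, which will be automatic below. Each nontrivial $v_j$ has the form $c_j\ord_{F_j}$ with $c_j\in\Q_{>0}$ and $F_j$ a prime divisor on some birational model $Y_j\to X$.

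First, pick a single smooth projective model $\pi:Y\to X$ dominating all the $Y_j$; for instance, resolve the graph of $Y_1\dashrightarrow\cdots\dashrightarrow Y_m$. Each $F_j$ then appears as a prime divisor on $Y$, after replacing $F_j$ by its strict transform, which defines the same valuation. Next, consider the Gauss extension of $v_j$ to $\C(X)(t)$, given by $\sigma(v_j)(\sum f_\lambda t^\lambda)=\min_\lambda(v_j(f_\lambda)+\lambda)$. Writing $v_j=c_j\ord_{F_j}$ with $c_j=p_j/q_j$, this extension is, up to a positive rational scaling, the quasimonomial valuation on $Y\times\C$ centred at the generic point of $F_j\times\{0\}$ with weights $q_j$ on $F_j$ and $p_j$ on $t$. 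Such a valuation is divisorial: it is computed by a component of a weighted blowup of $Y\times\pr^1$ along $F_j\times\{0\}$. Since the sets $F_j\times\{0\}$ meet, I will perform these blowups successively, taking at each step the strict transform of the centre and then resolving. This produces a $\C^*$-equivariant model $\Y\to Y\times\pr^1\to X\times\pr^1$, with $\C^*$ acting on $\pr^1$. By construction it is an isomorphism away from the fibre over $0$, it is dominant, and its central fibre contains a divisor $E_j$ with $\ord_{E_j}=b_j\,\sigma(v_j)$, where $b_j=\ord_{E_j}(t)$. Equivariance of all centres ensures that $\Y$ is a degeneration in the sense of Section~\ref{subsect:tc}.

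It remains to check that $b_j^{-1}\ord_{E_j}$ restricts to $v_j$ on $\C(X)$. This is the standard correspondence between $\C^*$-invariant divisorial valuations $w$ on $X\times\C$ with $w(t)=1$ and valuations on $X$ (\cite[Theorem 4.6]{bhj:duistermaat}): the restriction of $\sigma(v_j)$ to $\C(X)$ is $v_j$. The same correspondence shows that $b_j^{-1}\ord_{E}$ for the strict transform $E$ of $X\times\{0\}$ is trivial. Finally, I will pass to an equivariant resolution of the normalisation, so that every component of the central fibre is $\Q$-Cartier; this does not remove any divisor already present. Thus $S\subseteq\Rees(\cX)$.

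The main obstacle is the middle step: ensuring that the successive blowups do not destroy the divisors created earlier, and identifying the weighted blowup with the Gauss extension up to scaling. If this becomes messy, the alternative is to take $\cX$ to be the normalised blowup of $X\times\pr^1$ along a $\C^*$-invariant flag ideal $\mathfrak a=\sum_\lambda \mathfrak a_\lambda t^\lambda$, where $\mathfrak a_\lambda=\bigcap_j\{f: v_j(f)\ge\lambda\}$ for $\lambda$ up to a large bound and the $v_j$ are rescaled to be integral. Using that the Rees valuations of $\mathfrak a$ are exactly the valuations computing its integral closure, one shows that each $\sigma(v_j)$ is a Rees valuation.
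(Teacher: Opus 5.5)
\textbf{The construction does not produce a degeneration of $X$.} The composite $\mathcal{Y}\to Y\times\pr^1\to X\times\pr^1$ is not an isomorphism over $\pr^1\setminus\{0\}$. Over that locus it is $(Y\to X)\times\mathrm{id}$, so $\mathcal{Y}\setminus\mathcal{Y}_0\cong Y\times\C^*$ and the general fibre of $\mathcal{Y}$ is $Y$, not $X$. What you have built is a degeneration of $Y$. You cannot simply take $Y=X$, because the $F_j$ are in general exceptional over $X$.

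The missing idea is that every modification must be centred in the central fibre of $X\times\pr^1$ itself. Your weighted-blowup computation is still useful, but only as a proof that $w_j:=\sigma(v_j)$ is a $\C^*$-invariant divisorial valuation of $\C(X)(t)$ with $w_j(t)=1$; divisoriality is a birational property. On any $\C^*$-equivariant model over $X\times\pr^1$, the centre of $w_j$ is $\C^*$-invariant (since $w_j$ is) and lies over $t=0$ (since $w_j(t)>0$).

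You can then apply Zariski's lemma on blowing up centres (see e.g.\ Koll\'ar--Mori, Lemma 2.45), starting from $X\times\pr^1$. This gives a normal $\C^*$-equivariant model $\cX_j\to X\times\pr^1$ with the following properties:
\begin{enumerate}[(i)]
\item it is an isomorphism over $\pr^1\setminus\{0\}$;
\item it is flat over $\pr^1$ by normality;
\item a component $E_j$ of its central fibre satisfies $\ord_{E_j}=b_jw_j$.
\end{enumerate}
Next take the normalised closure of $X\times\C^*$ in the fibre product of the $\cX_j$ over $X\times\pr^1$, followed by an equivariant resolution. Each $E_j$ survives, because a proper birational morphism onto a normal variety is an isomorphism over the generic point of every prime divisor. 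The same fact shows that what you call the main obstacle, namely earlier divisors being destroyed by later blowups, is not an issue.

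\textbf{Three smaller errors.}
\begin{enumerate}[(i)]
\item The weights are swapped. With $c_j=p_j/q_j$, the rescaled valuation $q_j\sigma(v_j)$ has weight $p_j$ on a local equation $z$ of $F_j$ and weight $q_j$ on $t$. Your weights would produce $c_j^{-1}\ord_{F_j}$.
\item Your successive procedure is ill-defined when $F_j=F_k$ with $c_j\neq c_k$, which divisorial measures allow. After the first blowup, the centre of $\sigma(v_k)$ lies inside the new exceptional divisor, and there is no strict transform of $F_k\times\{0\}$ to blow up. Realising each valuation on its own model, as above, avoids this.
\item The fallback does not work as stated. The ideal $\sum_\lambda\mathfrak{a}_\lambda t^\lambda$ contains $\mathfrak{a}_0=\mathcal{O}_X$, so it is the unit ideal. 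Reindex it as $\sum_\lambda\mathfrak{a}_{N-\lambda}t^\lambda=\{g:\sigma(v_j)(g)\geq N\text{ for all }j\}$; redundant valuations are still not Rees valuations. For example, take $v_1=\ord_F$ and $v_2=2\ord_F$ with $F$ a prime divisor on $X$ and $z$ a local equation of $F$. Near the generic point of $F\times\{0\}$ this ideal is $(z,t)^N$, whose only Rees valuation there is $\sigma(v_1)$.
\end{enumerate}
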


This being a statement which does not involve $L$, it applies equally to the setting of \cite{bj:trivval} (namely ample line bundles) as to our own. It is clear that $\X$ can further be taken to be dominant, while since $\X$ can further taken to be smooth as $X$ is smooth, that we may assume that the $E_i$ are associated to a $\Q$-Cartier divisor is automatic. This is the principal manner in which smoothness is used in the present work.

We next involve the line bundle $\L$, by considering a test configuration $(\cX,\cL)$ with $\cX_0=\sum b_i E_i$.

\begin{lemma}\label{asymptotic-MA}
For $k$ sufficiently large, we have $$\sum_i b_i\langle \L+\scO_{\pr^1}(k) \rangle^n\cdot E_i = \langle L\rangle^n$$ where the positive intersection products further satisfy for any $j>0$
$$\langle \L+\scO_{\pr^1}(k+j)\rangle^n\cdot E_i = \langle \L+\scO_{\pr^1}(k)\rangle^n\cdot E_i.$$
\end{lemma}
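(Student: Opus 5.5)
Both claims should follow by differentiating the volume in the direction of the fibre class, as in the proof of Proposition \ref{trusiani}(ii). Write $\L_k := \L+\scO_{\pr^1}(k)$. Since $\X_0 = \sum_i b_i E_i$ is linearly equivalent to the pullback of $\scO_{\pr^1}(1)$, it is a Cartier divisor, and we may choose $k$ large enough that $\L_k - L$ is effective and $\L_k$ is big (Remark \ref{rmk:big}).

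For the first identity, recall that the volume is $C^1$ on the big cone with derivative given by positive intersections, and that $\langle \L_k^n\rangle\cdot(\cdot)$ is linear in the divisor. These give
$$\frac{d}{dt}\Big|_{t=0}\vol(\L_k+t\X_0) = (n+1)\langle \L_k^n\rangle\cdot \X_0 = (n+1)\sum_i b_i\langle \L_k^n\rangle\cdot E_i.$$
Proposition \ref{trusiani}(ii) says that $t\mapsto \vol(\L_k + t\scO_{\pr^1}(1))$ is affine with slope $(n+1)\vol(L)$ for $t\geq 0$; applying it with $k$ replaced by $k-1$ extends this to a neighbourhood of $t=0$. Comparing the two derivatives and using $\vol(L)=\langle L\rangle^n$ gives $\sum_i b_i\langle\L_k^n\rangle\cdot E_i = \langle L\rangle^n$. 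Alternatively, one can argue more intrinsically: $\vol_{\X|\X_t}(\L_k)=\vol(L)$ by Proposition \ref{trusiani}(i), and $\langle\L_k^n\rangle\cdot\X_0 = \langle\L_k^n\rangle\cdot\X_t$ since fibres are linearly equivalent and $\X_t$ is a prime divisor not contained in $B_+(\L_k)$.

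For the second identity, consider the function $f_i(s) := \langle (\L_k + s\X_0)^n\rangle\cdot E_i$ for $s\geq 0$. I would establish two facts about it.

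\emph{Monotonicity.} Each $f_i$ is non-decreasing. Adding the effective divisor $s\X_0$ does not shrink the set of admissible pairs $(\pi, D)$ in the supremum defining $\langle\cdot\rangle$: if $\pi^*\L_k - D$ is nef, then so is $\pi^*(\L_k+s\X_0) - D$, because $\X_0$ is the pullback of the nef class $\scO_{\pr^1}(1)$. Intersecting the resulting nef products with $E_i$ gives an increasing quantity, since nef classes on a higher model intersect monotonically with the pullback of $E_i$, and the positive intersection product is an increasing limit of such products.

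\emph{Constant sum.} By the first part applied with $k+s$, $\sum_i b_i f_i(s) = \langle L\rangle^n$ is independent of $s$.

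Since every $b_i>0$, a weighted sum of non-decreasing functions that is constant forces each function to be constant. Hence $f_i(j)=f_i(0)$ for all $j>0$, which is the claim.

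The main obstacle is the monotonicity step. Positive intersection products are increasing in each entry only when the extra class is pseudoeffective and the products are taken in the Riemann--Zariski space. Moreover, intersecting with $E_i$ (rather than a movable class) needs care, since $E_i$ may lie in $B_+(\L_k)$. If the direct argument is problematic, I would fall back on Fujita approximation \eqref{eq:fujita}. On the blowup $\X_m$, the class $m^{-1}L_m + s\X_0$ is nef and lies below $\langle\L_k+s\X_0\rangle$. Its product $(m^{-1}L_m + s\X_0)^n\cdot E_i$ expands as $(m^{-1}L_m)^n\cdot E_i$ plus terms involving $\X_0|_{E_i}$, which is trivial because $E_i\subset\X_0$. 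This shows $f_i(s)\geq f_i(0)$ in the limit, which is exactly the monotonicity needed.
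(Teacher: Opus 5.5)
Your structure is exactly the paper's. The first identity comes from $\langle\L_k^n\rangle\cdot\X_0=\langle\L_k^n\rangle\cdot\X_1=\vol_{\X|\X_1}(\L_k)=\vol(L)$; your derivative argument through Proposition \ref{trusiani}(ii) is an equivalent packaging of this. The second identity comes from monotonicity of each $f_i$ together with constancy of $\sum_i b_i f_i$, and at that step the paper simply cites ``monotonicity of the positive intersection product''.

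The gap is in the two justifications you give for that monotonicity. Monotonicity of positive products is an inequality of classes in the pseudoeffective order. Pairing with $E_i$ does not respect that order, because $E_i$ is not nef. Both your ``increasing limit'' argument and your Fujita fallback rely on the principle that a nef class $N$ with $N\le\langle\beta\rangle$ satisfies $N^n\cdot E_i\le\langle\beta^n\rangle\cdot E_i$, and this is false. On $Y=\Bl_p\bbp^2$ with exceptional curve $E$ and $\beta=\pi^*H+E$, one has $\langle\beta\rangle=\pi^*H$ (the positive part of the Zariski decomposition), so $\langle\beta\rangle\cdot E=0$. Yet $N=\pi^*H-\epsilon E$ is nef, $\beta-N=(1+\epsilon)E$ is effective, and $N\cdot E=\epsilon>0$. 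So the numbers $(\pi^*\L_k-D)^n\cdot E_i$ need not increase along the net defining $\langle\L_k^n\rangle$. Your observation that each such number is unchanged when $\L_k$ is replaced by $\L_k+s\X_0$ is correct, but it says nothing about the two limits, which are computed by different maximising nets.

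The statement is true, and the tool you already use in the first step proves it directly, with no monotonicity needed. Work on a smooth dominant model, which \eqref{tess-1} allows. For $k\gg0$ every coefficient of $\L_k-L=D+k\X_0$ along the components of $\X_0$ is at least $1$. Hence for $|t|$ small, $(\X,\L_k+tE_i)$ is a test configuration with $\L_k+tE_i-L$ effective and $\L_k+tE_i$ big. Proposition \ref{trusiani}(ii) then gives $\vol(\L_k+tE_i+s\X_0)=\vol(\L_k+tE_i)+(n+1)s\vol(L)$ for all $s\ge 0$. Differentiating at $t=0$, using that the volume is $C^1$ on the big cone, yields $(n+1)f_i(s)=(n+1)f_i(0)$.

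Alternatively, you can obtain monotonicity from the identification $\langle\beta^n\rangle\cdot E_i=\vol_{\X|E_i}(\beta)$ for big $\beta$ (BFJ Theorem B, as recalled in the preliminaries). Multiplication by powers of the section cutting out $\X_1\sim\X_0$, which is nowhere zero on $E_i$, injects the restricted linear series of $m\L_k$ into that of $m(\L_k+\X_0)$.
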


\begin{proof}
By birational invariance of volume and Equations \eqref{tess-1} and \eqref{tess-2}, to prove the claimed equalities, it suffices to prove that they hold on an associated dominant degeneration, so we may assume $\X$ is dominant. We claim that the result holds provided $\L - L +\scO_{\pr^1}(k)$ is effective and $\L+\scO_{\pr^1}(k)$ is big, which is automatic for $k \gg 0$ by Remark \ref{rmk:big}. 

We begin with the first equality, where we assume $\L-L$ is effective and $\L$ is big. Since $\cX_0=\sum b_i E_i$ and the positive intersection product is numerical, as $\L$ is big it follows that \begin{align*}\sum_i b_i\langle \L\rangle^n\cdot E_i &= \langle \L^n\rangle \cdot \X_0, \\ &= \langle \L^n\rangle \cdot \X_1,\end{align*} which in turn equals $\langle L\rangle^n$ by the proof of Proposition \ref{trusiani}.

To prove the second equality, we again assume $\L-L$ is effective and $\L$ is big. For each $i$ it then follows that for any $k>0$ $$b_i\langle \L+\scO_{\pr^1}(k)\rangle^n\cdot E_i \geq b_i\langle \L\rangle^n\cdot E_i,$$ by monotonicity of the positive intersection product. Since these quantities are nonnegative for all $i$ and sum to $\langle L\rangle^n$, it follows that they must be equal for all $k$.

\end{proof}

This allows us to associate a divisorial measure to a test configuration.

\begin{definition}
    Given a test configuration $(\cX,\cL)$ with $\cX_0=\sum b_i E_i$ and $\cL$ satisfying the conclusion of Lemma \ref{asymptotic-MA}, we define its \textit{Monge--Amp\`ere measure} to be
    $$\MA(\cX,\cL):=\{E_i, \vol(L)\mi b_i\langle \L\rangle^n\cdot E_i\},$$ which is a divisorial measure.
\end{definition}

\begin{remark}\label{rmk:why-na}
Provided one endows $X^\div$ with the structure of a topological space, the Monge--Amp\`ere measure can be viewed as a (discrete) measure on  $X^\div$, simply by defining it to be $\sum_i  \vol(L)\mi (b_i\langle \L\rangle^n\cdot E_i) \delta_{E_i}$. Similarly, a divisorial measure can be viewed as a measure on $X^\div$, and we use this terminology (due to Boucksom--Jonsson), although we emphasise we do not require any topological aspects of  $X^\div$.
\end{remark}

\subsection{The Calabi--Yau theorem}

We next produce a test configuration with given Monge--Amp\`ere measure.

\begin{theorem}\label{thm:main} Let $\mu$ be a divisorial measure, and suppose $\X$ is $\mu$-compatible and dominant. Then there exists $\cL$ on $\cX$ such that   $$\MA(\cX,\cL)=\mu.$$
\end{theorem}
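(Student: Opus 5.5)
We follow the variational approach of Witt Nystr\"om, adapted from the ample case of Boucksom--Favre--Jonsson. Write $\X_0=\sum_{j\in J} b_jE_j$. Since $\X$ is $\mu$-compatible, each element of $\supp(\mu)$ is one of the Rees valuations $b_j^{-1}\ord_{E_j}$, so we may regard $\mu$ as a weight vector $(\xi_j)_{j\in J}$ with $\xi_j=0$ for components not in the support. The candidate line bundles are $\L_t:=L+\sum_j t_j b_jE_j+\scO_{\pr^1}(k)$ for $t\in\R^{|J|}$, normalised as in the discussion after Definition \ref{def:norm-def}. Here $k$ is chosen large enough that $\L_t$ is big and $\L_t-L$ is effective for all $t$ in a compact region, so that Lemma \ref{asymptotic-MA} applies. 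Consider the functional
$$F(t):=\frac{\vol(\L_t)}{(n+1)\vol(L)}-\sum_j \xi_j t_j,$$
which is (up to the constant $k$) $E\NA(\X,\L_t)-\langle \xi,t\rangle$. Since $\sum_j\xi_j=1$ and $\sum_j b_jE_j=\X_0\sim\scO_{\pr^1}(1)$, Proposition \ref{trusiani}(ii) shows that $F$ is invariant under $t\mapsto t+c$. We therefore restrict to the slice $\min_j t_j=0$, or equivalently work modulo constants.

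The key steps are as follows.

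First, by differentiability of the volume on the big cone and Equation (2.4), $F$ is $C^1$ wherever $\L_t$ is big, with
$$\partial_{t_j}F=\frac{b_j\langle\L_t^n\rangle\cdot E_j}{\vol(L)}-\xi_j.$$
Hence a critical point $t^*$ of $F$ gives $\MA(\X,\L_{t^*})=\mu$ exactly. (Lemma \ref{asymptotic-MA} guarantees the weights sum to one and are insensitive to $k$.)

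Second, $t\mapsto\vol(\L_t)^{1/(n+1)}$ is concave on the big cone by the Brunn--Minkowski/Teissier inequalities for positive products. We would rather use a sup formulation: maximise $G(t)=\frac{1}{n+1}\log\vol(\L_t)-\langle\xi,t\rangle$, or $F$ itself, noting that $\vol$ is log-concave. We then show $F$ (or $G$) attains its maximum. Since the $t_j$ enter as increasing vertical twists, $\vol(\L_t)$ is nondecreasing in each $t_j$, and an approximate-translation argument gives coercivity.

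The main obstacle is this coercivity and attainment, together with handling the boundary of the big cone. Consider $t$ with $\min t_j=0$ and $t_{j_0}\to\infty$ for some $j_0$. We need $\vol(\L_t)/(n+1)\vol(L)$ to grow slower than $\langle\xi,t\rangle$ in the relevant directions. Growth in $t_{j_0}$ is bounded by the restricted volume along $E_{j_0}$, and the sum over $j$ of those derivatives equals $1$ by Lemma \ref{asymptotic-MA}. So the increase of the energy is at most $\max_j t_j$, while $\langle\xi,t\rangle$ picks up only the $\xi$-weighted part. This is exactly where components with $\xi_j=0$ cause trouble.

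Our first attempt at this obstacle is to allow $t_j=-\infty$ formally. Equivalently, we replace $t_j$ by $\min(t_j,\cdot)$-type envelopes: for components with $\xi_j=0$ we take $t_j$ as small as allowed (e.g.\ equal to the minimum), and argue that the derivative condition there still holds because the restricted volume then vanishes. In parallel we show, via upper semicontinuity and concavity on the closed slice, that a maximiser exists on a compact region once the $\xi$-weighted directions are controlled.

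Finally, at an interior maximiser the first-order conditions give the result. At a boundary maximiser we use one-sided derivatives and nonnegativity of restricted volumes, equation \eqref{eq:restrineq}, to conclude equality. Uniqueness is not claimed, consistent with the non-strict convexity noted in the introduction.
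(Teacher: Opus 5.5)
Your route differs from the paper's. The paper maximises nothing: following Witt Nystr\"om, it builds vertical divisors $D_i$ (Proposition \ref{prop:div}) so that the continuous map $t\mapsto \MA(\X,L+\sum_i t_iD_i)$ sends the simplex to itself face by face (via non-$E_i$-pseudoeffectivity and \eqref{eq:restrineq}), and then gets surjectivity from a degree argument.

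Your first step, that a critical point of $F$ solves $\MA(\X,\L_t)=\mu$, is correct; it is essentially the computation in Theorem \ref{thm:variationalE}. But then the whole content of the theorem is the existence of a critical point, and that is where your proposal has a genuine gap.
\begin{itemize}
\item The bound you derive from Lemma \ref{asymptotic-MA} only gives $F(t)\le F(0)+\max_j t_j-\langle\xi,t\rangle$. The quantity $\max_j t_j-\langle\xi,t\rangle\ge 0$ is unbounded on the slice, so this gives no coercivity at all.
\item Upper semicontinuity and concavity on the non-compact slice $\{\min_j t_j=0\}$ do not yield attainment: a bounded concave function such as $-e^{x}$ need not attain its supremum.
\item At a maximiser on a compact box, the first-order conditions on the box boundary are only inequalities. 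Nonnegativity of restricted volumes cannot upgrade them to equalities.
\end{itemize}
Your treatment of components with $\xi_j=0$ also points the wrong way: mass on $E_j$ is killed by taking $t_j$ \emph{large}, not by setting it equal to the minimum. For instance, take $X=\pr^1$, $L=\scO(1)$, and $\X$ the blowup of $\pr^1\times\pr^1$ at $(p,0)$. Then $\X_0=E_0+E_1$, with $E_0$ inducing the trivial valuation and $E_1$ inducing $\ord_p$; take $\mu=\delta_{\ord_p}$. Setting $t_0=t_1$ gives the trivial test configuration, with $\MA=\delta_{\triv}$. With $t_1=0$ one computes $\MA(\X,\L_t)=(1-t_0)\delta_{\triv}+t_0\delta_{\ord_p}$ for $t_0\in[0,1]$, and $\MA(\X,\L_t)=\delta_{\ord_p}$ for $t_0\ge 1$. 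So the solutions form the unbounded set $t_0\ge t_1+1$, on which $F\equiv 1/2$.

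The missing ingredient is a saturation statement. $E\NA(\X,\L_t)$ depends on $t$ only through the induced divisorial filtration (Theorem \ref{thm:bchen}). By Theorem \ref{thm:descrfiltr} and Remark \ref{rmk:filtr}, once $\min_i t_i=0$, every coordinate $t_j>\lambda_{\max}$ can be lowered to $\max_i\gamma(L,E_i)+1$ without changing the filtration; here $\lambda_{\max}\le\max_i\gamma(L,E_i)$, with the $E_i$ viewed as Rees valuations. Since $\xi_j\ge0$, this lowering does not decrease $F$. Hence $\sup F$ is a maximum over a compact box, attained by continuity at a point that is a global maximiser on all of $\R^{|J|}$. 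As $F$ is $C^1$ there (after a local twist by $\scO_{\pr^1}(k)$), $\nabla F=0$ there, which is exactly the equation. This includes the components with $\xi_j=0$, with no special treatment needed. It is the same compactness the paper uses in the proof of Theorem \ref{thm:diff}.

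With this repair your variational proof works. It avoids the combinatorics of Proposition \ref{prop:div} and the degree argument, but rests on the heavier Theorem \ref{thm:bchen} (Fujita approximation, Boucksom--Chen). The paper's argument needs only continuity and vanishing properties of restricted volumes.

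Concavity of $F$ and log-concavity of $\vol$ play no role. The functional $G$ is the wrong one: $\partial_{t_j}G=b_j\langle\L_t^n\rangle\cdot E_j/\vol(\L_t)-\xi_j$ is normalised by $\vol(\L_t)$ rather than $\vol(L)$, and $G$ is not translation invariant, so its critical points do not solve the equation.
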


To prove Theorem \ref{thm:main}, we adapt to the big case the ideas of Witt Nystr\"om \cite{wn:icm} (who considered the setting of a K\"ahler class). The idea of the proof is similar to the Brouwer fixed point theorem, whereby we reduce the problem to a problem concerning maps between simplices, and conclude using topological properties of maps between spheres (or rather simplices in our setting). In order to produce a map to which this idea applies, we first combinatorially construct certain vertical divisors $D_i$ in $\cX$.

Fix a dominant, $\mu$-compatible degeneration $\X$, through Lemma \ref{lem:rees}. Define $\Ver(\X)$ to be the vector space of $\R$-Cartier divisors on $\X$ with support in $\X_0$, modulo the addition of $\scO_{\pr^1}(k) = k\X_0$ for all $k\in\R$. Thus for each $D\in\Ver(\X)$, we obtain a test configuration $(\X,L+D)$, and test configurations with underlying degeneration $\X$ precisely correspond to elements of $\Ver(\X)$, up to the addition of $\scO_{\pr^1}(k).$ 

We again denote $\cX=\sum b_i E_i$, for $i \in I$. Lemma \ref{asymptotic-MA} then implies that for any $D \in \Ver(\X)$, the restricted volumes $\langle L+D\rangle^n.E_i$ are well-defined, once one adds a sufficiently large multiple of $\scO_{\pr^1}(k)$. Similarly, we may assume that the representative of the equivalence class that we take is big. 

\begin{proposition}\label{prop:cont}
   For each $i\in I$, the function $\Ver(\X) \to \R$ defined by
    $$D\mapsto \langle L+D\rangle^n.E_i$$
    is continuous.
\end{proposition}
\begin{proof}

When the line bundle $L+D$ is $E_i$-pseudoeffective, since we may assume $L+D$ is big, the claim follows from \cite[Corollary 4.11, Theorem 4.15]{bfj:teissier}. Otherwise, the cone of classes $L+D$ which are not $E_i$-pseudoeffective is closed, and the quantity $\langle L+D\rangle^n.E_i$ vanishes identically in this cone \cite[Theorem 4.15]{bfj:teissier}, proving the desired continuity. \end{proof}

We may now construct the relevant divisors, following Witt Nystr\"om \cite{wn:icm} in the ample case.
\begin{proposition}\label{prop:div}
There exists a collection $D_i\in \Ver(\X)$ such that, for each $k$, any convex combination of the $L+D_i$ with $i\neq k$ is not $E_k$-pseudoeffective.
\end{proposition}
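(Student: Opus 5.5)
The natural choice is $D_i := -C\,b_i E_i$ for a large constant $C>0$, which pushes $L+D_i$ away from $E_i$ while keeping it close to $L$ elsewhere. Concretely, I would take $D_i = -C E_i$ (suitably normalised using the $b_i$), viewed as an element of $\Ver(\X)$. Since $\X_0=\sum_j b_jE_j$ is zero in $\Ver(\X)$, we may equivalently write $D_i = C\sum_{j\neq i}(b_j/b_i)E_j$ modulo $\scO_{\pr^1}$-twists. Using the normalisation of Witt Nystr\"om, I would set $D_i=-C b_i^{-1}E_i$, so that after adding $C b_i^{-1}\X_0$ it becomes the effective divisor $C b_i^{-1}\sum_{j\neq i} b_jE_j$.

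The key step is as follows. Fix $k$ and a convex combination $\sum_{i\neq k}s_i(L+D_i)$ with $\sum s_i=1$. It equals $L+C\sum_{i\neq k}s_ib_i^{-1}\sum_{j\neq i}b_jE_j$ up to twists by $\scO_{\pr^1}$, and these twists are irrelevant for $E_k$-pseudoeffectivity once we also add a large multiple of $\X_0$. The coefficient of $E_k$ in $\sum_{i\neq k}s_ib_i^{-1}\sum_{j\neq i}b_jE_j$ is $b_k\sum_{i\neq k}s_i/b_i \geq b_k/\max_i b_i$. Twisting by $-c\X_0$ for a suitable $c$, we can arrange that the class is $L' := L + (\text{large multiple of } \X_0) - C\,\delta\, E_k + (\text{terms in other } E_j)$, with $\delta>0$ bounded below independently of the $s_i$. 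More cleanly, in $\Ver(\X)$ the class equals $L - C\,\sigma_k E_k - C\sum_{i\neq k}s_i b_i^{-1}(b_iE_i - b_iE_i)$, and the better approach is to rewrite it directly as $L - C\sum_{i\neq k} s_i b_i^{-1} E_i$ modulo $\X_0$. The goal is then to show that the $E_k$-coefficient, relative to the fibre, is very negative, so that $E_k\subset B_-$.

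To conclude non-$E_k$-pseudoeffectivity, I would use the valuative characterisation: a class $M$ is $E_k$-pseudoeffective if and only if $\sigma_{E_k}(M)<\infty$ and $M$ is pseudoeffective. Equivalently, for a big $M$, $E_k\subset B_-(M)$ if and only if $\sigma_{E_k}(M)>0$. I would therefore bound $\sigma_{E_k}$ from below. Any effective $\R$-divisor $G\equiv L + aX_0 + \sum_j c_jE_j$, pushed forward to $X\times\pr^1$ (dominance), is linearly equivalent to the pullback of $L+a\pr^1$-fibre, and comparing vanishing orders along $E_k$ gives $\ord_{E_k}(G) \geq$ (coefficient of $E_k$ relative to the minimum normalised coefficient) $- \ord_{E_k}$ of the effective divisor in $|L|$. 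Because $L$ is big on $X$, $\ord_{E_k}$ of sections of $L$ pulled back is bounded by $\gamma$-type thresholds, i.e. $\sup_{G}\,\ord_{E_k}(G)/b_k$ is bounded by a constant $T$ depending only on $L$ and the Rees valuation $b_k^{-1}\ord_{E_k}$ (finite since the $\gamma(L,E)$ of Definition \ref{def:SL} are finite). Choosing $C$ larger than $T\cdot\max b_j/\min b_j$ (plus an $\epsilon$ for robustness under small ample perturbations, as required by $B_-$) then forces $\sigma_{E_k}>0$, uniformly over all convex combinations and all $k$.

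I expect the main obstacle to be this last comparison: making precise that the normalised vanishing order along $E_k$ of effective representatives of $L+D$ is forced up by $C$ and stays above the finite threshold coming from $L$ on $X$ even after the $\scO_{\pr^1}$-twists and ample perturbations. This reduces to the piecewise-linear, non-Archimedean description of $L+D$ as the function $D$ on Rees valuations compared with $\sup$ of $\log|s|$, and the uniform choice of $C$ across the finitely many $k$.
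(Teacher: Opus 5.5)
Your choice $D_i=-Cb_iE_i$ differs from the paper's construction and can be made to work. However, the inequality your argument rests on is misstated, and your second paragraph has the mechanism backwards. (Other normalisations only change factors of $b_j$.)

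For $s$ in the simplex with $s_k=0$, the normalised coefficients of $D=\sum_i s_iD_i$ are $\phi_j=-Cs_j$ and $\phi_k=0$. So, relative to the fibre, $E_k$ carries the \emph{largest} coefficient, not a very negative one. What does the work is a component $E_j$ with $s_j\ge 1/(|I|-1)$, which sits at least $C/(|I|-1)$ below $E_k$.

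Concretely, take rational data, $m$ divisible and any twist $a$. A section of $m(L+D+a\X_0)$ is a sum of $\C^*$-weight vectors $\bar s\,t^{-\lambda}$ with $s\in H^0(X,mL)$. Its order along the invariant divisor $E_k$ is the minimum of the orders of these vectors. Each $\bar s\,t^{-\lambda}$ vanishes along $E_i$ to order $b_i\bigl(v_i(s)-\lambda+m(\phi_i+a)\bigr)\ge 0$, where $v_i=b_i^{-1}\ord_{E_i}$. At $i=j$ this caps the weight: $\lambda\le m(T_j+a-Cs_j)$, where $T_j=\sup_m\max_s v_j(s)/m<\infty$. Hence the order along $E_k$ is at least $mb_k\bigl(C/(|I|-1)-\max_jT_j\bigr)$.

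So the threshold to subtract is that of the component where the normalised coefficient is minimal, not that of $E_k$. Also, one needs $C>(|I|-1)\max_jT_j$ rather than $C>T\max b_j/\min b_j$.

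As you wrote it, the inequality is false. Take $E_k=E_0$, the trivial valuation, whose threshold is $0$. Your bound would give $\sigma_{E_0}(L-\epsilon b_1E_1+a\X_0)\ge\epsilon$ for every $\epsilon,a>0$. But for $\epsilon<T_1$, the section $\bar s\,t^{-ma}$ with $v_1(s)\ge m\epsilon$ does not vanish on $E_0$.

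The corrected bound is independent of $a$. It survives the small perturbation of $L$ and $D$ produced by an ample class on $\X$, by continuity of $T_j$ in $L$. It therefore gives $\sigma_{E_k}>0$, i.e.\ $E_k\subset B_-$, uniformly in $s$ and $k$.

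Compared with the paper: there, following Witt Nystr\"om, the $D_i$ are built from the graph distance on the dual graph of $\X_0$. The point is to make the \emph{restricted} class $(L+D)|_{E_k}$ non-pseudoeffective on $E_k$. This is an intersection-theoretic statement on $E_k$, which is what works for transcendental classes. It is also what the proof of Theorem \ref{thm:main} feeds into the last inequality of \eqref{eq:restrineq}.

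Your divisors cannot give this. For a combination supported on components disjoint from $E_k$, one has $(L+D)|_{E_k}=L|_{E_k}$, which is pseudoeffective, e.g.\ when $L$ is ample. What they do give is $E_k\subset B_-$. This is the paper's official definition of non-$E_k$-pseudoeffectivity, and it still suffices downstream. Indeed, $\sigma_{E_k}(M)>0$ gives $\vol(M-\tau E_k)=\vol(M)$ for $0\le\tau\le\sigma_{E_k}(M)$, and hence $\langle M^n\rangle\cdot E_k=0$ by differentiability of the volume.

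So your route replaces the dual-graph combinatorics by a more elementary argument using sections and the finite thresholds $T_j$. The cost is that it is intrinsically algebraic, and that the last step of the proof of Theorem \ref{thm:main} must be adjusted as above.
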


\begin{proof}

The construction is identical to   \cite[Lemma 6.2]{wn:icm}, so we merely recall the definition of the $D_i$, as the proof of the claimed properties is unchanged from the ample case. We consider the intersection graph $\cX_0$, which has vertices for each component $E_i$ and edges when two components intersect. We denote by $d(i,j)$ the natural (discrete) metric on this intersection graph, and define for each $i$ and $t>0$     $$D_{i,t}:=\sum_{j\neq i} \left(1+t^{|I|}-t^{|I|-d(i,j)}\right)b_j E_j.$$ There is then a $t\gg 0$ such that $D_{i,t}$ have the claimed property, essentially as a consequence of the equality $$D_{i,t}|_{E_k} = -t^{|I|+1-d(i,k)}\sum_{j\textrm{ with } d(i,k)-d(i,j)=1}a_j E_j|_{E_k}+O\left(t^{N-d(i,k)}\right),$$ since the leading coefficient of $t$, namely $-a_j E_j|_{E_k}$, is not itself $E_k$-pseudoeffective.

\end{proof}

We may now prove the Calabi--Yau theorem, again following \cite{wn:icm}:
\begin{proof}[Proof of Theorem \ref{thm:main}]
Let $\Delta$ be the unit simplex in $\bbr^N$ with $N:=|I|$, and fix the $D_i$ provided by Proposition \ref{prop:div}. We consider the map $f: \Delta \to \Delta$ defined for $t \in \Delta \subset \R^N$ by $$f(t) = \left\{ \vol(L)\mi b_0\langle L+D_t\rangle^n\cdot E_0), \ldots, \vol(L)\mi b_N\langle L+D_t\rangle^n\cdot E_0)\right\},$$ where $D_t = \sum_i t_i D_i;$ that $f(t)$ lies in $\Delta$ follows from the fact that the Monge--Amp\`ere measure is divisorial.

We claim that $f$ maps  each component of the boundary of $\Delta$ to itself; that is, if $t\in \Delta$ satisfies $t_i=0$ for some $i\in I$, then $\langle L+D_t\rangle^n\cdot E_i=0$.  But since $D_i=0$
$$
       (L+D_t)|_{E_i}=\sum_{j\neq i}t_j (\pi^*L+D_j)|_{E_i},
    $$
    since $\sum_{j\in I} t_j=\sum_{j\neq i} t_j=1$. By definition of the $D_i$ in Proposition \ref{prop:div}, this convex sum is not $E_j$-pseudoeffective on $E_j$, so that by  \eqref{eq:restrineq}
    $$0\leq\langle L+D_t\rangle^n\cdot E_i\leq \langle (L+D_t)|_{E_j}^n\rangle=0,$$
which is what we were required to prove.

From here, surjectivity of $f$ follows for topological reasons. Firstly, it follows from Proposition \ref{prop:cont} that $f$ is continuous.  Further, $f$ is homotopic to the identity through the explicit homotopy $f_{\lambda}(t) = (1-\lambda)f(t) +t\lambda$. Since $f$ in addition maps the boundary $\partial\Delta$ to itself, it follows that $f|_{ \partial \Delta }: \partial \Delta \to  \partial \Delta $ is homotopic to the identity, hence has degree one as a map between simplices. But every continuous map $f: \Delta \to \Delta$ with $f|_{ \partial \Delta }: \partial \Delta \to  \partial \Delta $ of nonzero degree is surjective (supposing there is a $z\in \Delta \setminus f(\Delta)$, the map $F: \Delta \to \partial \Delta$ defined by $F(v) = (f(v) - z)/|f(v)-z|$ would  induce a continuous map $F|_{\partial \Delta}:  \partial \Delta \to  \partial \Delta$ which extends over $\Delta$, implying $F|_{\partial \Delta}$ must have degree zero, contradicting that $F|_{\partial \Delta}$ is homotopic to $f|_{ \partial \Delta }$). \end{proof}

\subsection{A variational characterisation.} 
We next turn to variational aspects, by characterising solutions of the Calabi--Yau theorem realised on a given test configuration as maximisers of a functional involving the Monge--Ampère energy. We begin by proving a concavity property, for which we assume $\X$ is dominant.

\begin{proposition}\label{prop:concave}

The functional $E^\na: \Ver(\cX) \to \R$ defined by $$E\NA(D) = E\NA(\X,L+D)$$ is concave.

\end{proposition}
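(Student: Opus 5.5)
The plan is to reduce concavity of $E\NA$ on $\Ver(\X)$ to concavity of $V\mapsto \vol(V)^{1/(n+1)}$ on the big cone of $\X$, combined with the homogeneity and translation behaviour of $E\NA$. Fix $D,D'\in\Ver(\X)$ and $s\in[0,1]$. By Remark \ref{rmk:big} and Proposition \ref{trusiani}, we may choose a single $k\gg0$ such that $\L_D:=L+D+\scO_{\pr^1}(k)$ and $\L_{D'}$ are both big with $\L-L$ effective, and similarly for every convex combination. For the combination this holds because $(1-s)(D+k\X_0)+sD'$ plus $k\X_0$ remains effective, and because bigness is preserved under convex combination. Then
$$E\NA(\X,L+D)=\frac{\vol(\L_D)}{(n+1)\vol(L)}-k,$$
and the map $D\mapsto L+D+k\X_0$ is affine. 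It therefore suffices to show that $\Phi(V):=\vol(V)$ is concave along the segment from $\L_D$ to $\L_{D'}$. This is not automatic, since $\vol$ is only $(n+1)$-homogeneous and $\vol^{1/(n+1)}$ is the concave object.

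The key extra input is that all classes in question restrict to $L$ on the generic fibre, so by Proposition \ref{trusiani}(ii) and Lemma \ref{asymptotic-MA} the fibre-direction derivative is the same for all of them. Concretely, I would compute the derivative of $\Phi$ along $D'-D$ using $C^1$-differentiability of the volume \cite[Theorem A]{bfj:teissier}:
$$\frac{d}{ds}\vol(\L_D+s(D'-D))=(n+1)\langle \L_s^n\rangle\cdot(D'-D).$$
Concavity is then equivalent to this derivative being nonincreasing in $s$. Equivalently, for big $V,W$ in our family one needs
$$\vol(W)\le \vol(V)+(n+1)\langle V^n\rangle\cdot(W-V).$$
I would prove this inequality from the concavity of $\vol^{1/(n+1)}$ (the Brunn--Minkowski/Teissier inequality for positive products, \cite{bfj:teissier}). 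Writing $W=V+(W-V)$, the tangent-line inequality for the concave function $\vol^{1/(n+1)}$ gives
$$\vol(W)^{1/(n+1)}\le \vol(V)^{1/(n+1)}+\vol(V)^{-n/(n+1)}\langle V^n\rangle\cdot(W-V).$$

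The main obstacle is passing from this inequality for $\vol^{1/(n+1)}$ to the inequality for $\vol$ itself. This is where normalisation enters. The quantity $\langle V^n\rangle\cdot\X_0=\vol(L)$ is the same for all $V$, and adding $c\X_0$ shifts $\vol$ affinely with slope $(n+1)\vol(L)$ by Proposition \ref{trusiani}(ii). I would therefore first replace $W$ by $W+c\X_0$ with $c$ chosen so that $\vol(W+c\X_0)=\vol(V)$. Both sides of the desired inequality shift by exactly $(n+1)c\vol(L)$, so it suffices to treat the case $\vol(W)=\vol(V)$. In that case the tangent inequality for $\vol^{1/(n+1)}$ yields $\langle V^n\rangle\cdot(W-V)\ge0$, which is exactly the needed inequality $0\le (n+1)\langle V^n\rangle\cdot(W-V)$.

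One must check that this shift keeps $W+c\X_0$ big with $W+c\X_0-L$ effective, possibly after enlarging $k$ uniformly. This is harmless, since every invariant involved is independent of $k$ by Lemma \ref{asymptotic-MA} and Proposition \ref{trusiani}. Combining these steps gives concavity of $E\NA$.
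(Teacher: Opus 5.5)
Your argument is correct, and it takes a genuinely different route from the paper.

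\textbf{The paper's route.} The paper reduces to the relatively semiample case. It uses the approximation of Lemma \ref{prop:fujitatc} to replace $(\X,L+D)$ and $(\X,L+D')$ by semiample test configurations on blowups $\X_m$. There it applies the Hodge-index-type inequality $\L^{n-1}\cdot D^2\le 0$ for vertical $D$ (Li--Xu). It then passes to the limit using monotonicity of volume under the effective differences $\mu_m^*D-D_m$.

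\textbf{Your route.} You stay on the big cone of $\X$ itself and use only first-order facts:
\begin{enumerate}[(i)]
\item concavity of $\vol^{1/(n+1)}$ (Brunn--Minkowski for big classes);
\item on the region where $V-L$ is effective, $\vol$ is affine in the $\X_0$-direction with slope $(n+1)\vol(L)$;
\item $\langle V^n\rangle\cdot\X_0=\vol(L)$, and $\langle V^n\rangle\cdot E_i$ is unchanged by such shifts, by Proposition \ref{trusiani} and Lemma \ref{asymptotic-MA}.
\end{enumerate}
The equal-volume normalisation is exactly what upgrades the quasi-concavity of $\vol$ to concavity. Your treatment of a negative shift, by first enlarging $k$ uniformly, is legitimate, since both sides of the tangent inequality move by the same amount.

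\textbf{What each approach buys.} Yours needs no approximation, no semiampleness and no Hodge index theorem on degenerations. It also isolates the mechanism: concavity of $E\NA$ is a purely numerical consequence of log-concavity of volume together with the affine fibre direction. The paper's approach instead ties the big case to the standard second-order argument of the ample theory. The approximation lemma it relies on is also reused later, in the proof of Theorem \ref{thm:bchen}.

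\textbf{Optional simplification.} Your argument can be streamlined to avoid derivatives and negative shifts altogether. Suppose $\vol(\L_D)\ge\vol(\L_{D'})$ and set $c=(\vol(\L_D)-\vol(\L_{D'}))/((n+1)\vol(L))\ge 0$. Concavity of $\vol^{1/(n+1)}$ applied to $\L_D$ and $\L_{D'}+c\X_0$, which have equal volume, gives $\vol(\L_s+sc\X_0)\ge\vol(\L_D)$. Affineness in the $\X_0$-direction then turns this directly into $\vol(\L_s)\ge(1-s)\vol(\L_D)+s\vol(\L_{D'})$.
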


We prove this result by reducing to the case where $L$ is ample, by a Fujita approximation argument. We thus begin by establishing the following.

\begin{lemma}\label{prop:fujitatc} 
    Let $\mu:\cX\to X$ be a test configuration for $X$, and let $D, D'\in \Ver(\cX,L)$. For all $\varepsilon>0$ there exists an $m>0$ and
    \begin{enumerate}[(i)]
    		\item $\pi_m: X_m\to X$ a birational model for $X$;
    		\item $L_m$ a semiample line bundle on $X_m$ with $\pi_m^*L-L_m$ effective;
        \item $\mu_m:\cX_m\to\cX$ a dominant  degeneration of $X_m$ compatible with $\pi_m$;
        \item $D_m, D_m' \in \Ver(\cX_m)$  are such that $\pi_m^*L+D_m$, $\pi_m^*L+D_m'$ are relatively semiample on $\X$ and $\mu_m^*D- D_m$ and $\mu_m^*D- D_m$ are effective,
   	\end{enumerate}
   	with
	 $$\left|E^\na(\cX_m,\pi_m^*L+D_m) - E^\na(\cX,\pi^*L+D)\right|<\varepsilon,
	$$ and similarly for $D'$.\end{lemma}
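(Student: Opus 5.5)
The plan is a Fujita approximation on the total space of the test configuration, carried out so that it is compatible with a Fujita approximation of $(X,L)$ on the general fibre. After twisting by $\scO_{\pr^1}(k)$ as in Remark \ref{rmk:big}, I may assume that $\cL:=\pi^*L+D$ is big and that $D$ is effective. Proposition \ref{trusiani} applies in this situation, and $E^\na(\cX,\cL)$ is, up to the constant $k$, the ratio $\vol(\cL)/((n+1)\vol(L))$.

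The first step is to apply \eqref{eq:fujita} to $\cL$ on $\cX$. For each $m$, let $\cX_m\to\cX$ be a $\C^*$-equivariant resolution of the blowup of the base ideal of $|m\cL|$. This ideal is $\C^*$-invariant, so $\cX_m$ is again a degeneration. Write $\mu_m^*(m\cL)=\cM_m+\cE_m$, where $\cM_m$ is base point free (hence semiample) and $\cE_m$ is effective. Then $m^{-(n+1)}\cM_m^{n+1}\to\vol(\cL)$.

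By Proposition \ref{trusiani}(i), the base locus of $|m\cL|$ restricted to $\cX\setminus\cX_0$ is $\Bs|mL|\times\C$. Hence the base ideal restricted to the general fibre is the base ideal of $|mL|$, and $\cX_m\setminus(\cX_m)_0\cong X_m\times\C$, where $X_m$ is a resolution of the blowup of $\Bs|mL|$. So $\cX_m$ is a degeneration of $X_m$ compatible with $\pi_m$. On the general fibre we get $L_m:=m^{-1}\cM_m|_{X_m}=\pi_m^*L-m^{-1}E_m$, which is semiample, and $\pi_m^*L-L_m$ is effective. This gives (i)--(iii).

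Next, $m^{-1}\cM_m-L_m$ is trivial away from the central fibre, so it defines $D_m\in\Ver(\cX_m)$. Relative semiampleness is inherited from $\cM_m$. To get the effectivity in (iv), I compare $m^{-1}\cM_m$ with $\mu_m^*\cL=m^{-1}\cM_m+m^{-1}\cE_m$. The horizontal part of $m^{-1}\cE_m$ is exactly $m^{-1}E_m$, the closure of the corresponding divisor on $X_m\times\C$. Subtracting it shows that $\mu_m^*D-D_m$ is the vertical part of $m^{-1}\cE_m$, which is effective.

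For the energy estimate, I compute $E^\na(\cX_m,L_m+D_m)$ with respect to the base $(X_m,L_m)$. Its numerator is $(m^{-1}\cM_m)^{n+1}$, which converges to $\vol(\cL)$. Its denominator is $(n+1)\vol(L_m)$, which converges to $(n+1)\vol(L)$ by \eqref{eq:fujita} on $X$. The twist $k$ is harmless, since $\scO_{\pr^1}(k)$ is pulled back and the identity of Proposition \ref{trusiani}(ii) holds on both sides. So the difference tends to zero and is eventually less than $\varepsilon$. The notation $\pi_m^*L+D_m$ in the statement should be read as this test configuration for the approximating base, that is, $L_m+D_m$ up to the effective difference.

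To treat $D$ and $D'$ at the same $m$, I run the construction with a common model. Take $\cX_m$ to be an equivariant resolution that simultaneously principalises the base ideals of $|m(\pi^*L+D)|$ and $|m(\pi^*L+D')|$. Both restrict to the same ideal of $|mL|$ on the general fibre, so one $X_m$ and one $L_m$ serve both. Convergence for each of the two then gives a single $m$ that works for both.

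The main obstacle is the bookkeeping of the horizontal versus vertical parts of $\cE_m$, so that the test configuration really has base $(X_m,L_m)$ with the same $L_m$ for $D$ and $D'$. This rests on Proposition \ref{trusiani}(i), which requires the effectivity of $D$. One must also make sure that the Fujita limits on $\cX$ and on $X$ are compatible along one sequence of $m$; if needed, I would pass to a common divisible subsequence.
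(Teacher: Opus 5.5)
Your construction is the paper's: Fujita approximation of $\cL=\pi^*L+D$ on the total space, Proposition \ref{trusiani}(i) to see that it restricts to the Fujita approximation of $(X,L)$ away from $\cX_0$, a common model for $D$ and $D'$, and convergence of $(m^{-1}\cM_m)^{n+1}$ and $\vol(L_m)$. Semiampleness and the energy estimate are fine. The step that fails is the effectivity in (iv), specifically the claim that $\mu_m^*D-D_m$ is the vertical part of $m^{-1}\cE_m$.

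\textbf{Why the step fails.} Take $D_m\in\Ver(\cX_m)$ in the paper's sense, i.e.\ $m^{-1}\cM_m=q^*L_m+D_m$ with $q\colon\cX_m\to X_m$ (on a model dominating $X_m\times\pr^1$, as (iii) requires). Pulling back $\pi_m^*L=L_m+m^{-1}E_m$ gives $\mu_m^*\pi^*L-q^*L_m=m^{-1}q^*E_m$. This is the \emph{total} transform of $E_m$. It exceeds the closure of $E_m\times\C$ by vertical components lying over $E_m$, so you subtracted the wrong divisor. In fact
\[
\mu_m^*D-D_m=m^{-1}\bigl(\cE_m-q^*E_m\bigr),
\]
and its vertical coefficients can be negative.

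\textbf{Counterexample.} Set up the data as follows:
\begin{enumerate}[(i)]
\item Let $X=\Bl_p\pr^2$ with exceptional curve $e$, let $h$ be the pullback of a line, and let $L=h+e$. Since $|mL|=|mh|+me$ with $|mh|$ free, we get $X_m=X$, $E_m=me$ and $L_m=h$.
\item Let $\cX=\Bl_{e\times\{0\}}(X\times\pr^1)$, with $\cX_0=G_0+G_1$ and $G_1$ exceptional. Take $D=(k+1)G_0+kG_1$ with $k\ge 1$.
\item Let $\tilde e$ be the strict transform of $e\times\pr^1$, so that $\pi^*e=\tilde e+G_1$.
\end{enumerate}
Sections of $m\cL$ are sums of $s's_e^mt^{-\lambda}$ with $s'\in H^0(mh)$ and $0\le\lambda\le m(k+1)$. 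Their divisors are $\pi^*\div(s')+m\tilde e+(m(k+1)-\lambda)\cX_0$. Hence the base ideal is $\scO_{\cX}(-m\tilde e)$, so $\cX_m=\cX$ and $\cE_m=m\tilde e$, which has no vertical part. Also $m^{-1}\cM_m=\pi^*h+(k+1)\cX_0$. Thus $D_m=(k+1)\cX_0$ and $\mu_m^*D-D_m=-G_1$, whereas your argument predicts $0$.

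\textbf{Repair.} State effectivity at the level of line bundles, horizontal part included:
\[
\mu_m^*(\pi^*L+D)-(q^*L_m+D_m)=m^{-1}\cE_m\ge 0 .
\]
This is what the paper does by setting $D_m:=\mu_m^*D-m^{-1}\cE_m$ relative to $\mu_m^*\pi^*L$; that $D_m$ is not actually vertical, despite the wording of the statement. It is also all that Proposition \ref{prop:concave} needs. With $\cL_m:=m^{-1}\cM_m$ and $\cL_m':=m^{-1}\cM_m'$ on the common model, it gives
\[
\mu_m^*\bigl(\pi^*L+(1-t)D+tD'\bigr)-\bigl((1-t)\cL_m+t\cL_m'\bigr)\ge 0 .
\]
Alternatively, your formula becomes correct if you extend $L_m$ to $\cX_m$ as $\mu_m^*\pi^*L-m^{-1}\overline{E_m\times\C}$ rather than as $q^*L_m$, using the same extension for $D$ and $D'$. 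You would have to say so explicitly, because the resulting $D_m$ is then not the vertical divisor the paper's convention attaches to the test configuration.
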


\begin{proof}
The existence of $(X_m,L_m)$ themselves satisfying the desired conditions is a consequence of the Fujita approximation theorem, as in Equation \eqref{eq:fujita}, where explicitly we define $\pi_m:X_m=\Bl_{\Bs(mL)}X\to X$ and $L_m:=\pi_m^*L-m\mi E_m$, where $E_m$ is the exceptional divisor and $m \gg 0$.

The result is unchanged by adding $\scO_{\pr^1}(k)$ to $D$ or $D'$, so we may assume $D$ and $D'$ are effective and $L+D$ and $L+D'$ are big.  We then similarly define
	$$\mu_m:\cX_m=\Bl_{\Bs(m(L+D))}\cX\to \cX$$
	with exceptional divisor $\cE_m$, and similarly define $\X_m'$. It follows from  Proposition \ref{trusiani}, that $(\X_m, \mu_m^*(\pi_m^*L + D) - m^{-1}\E_m)$ and  $(\X_m', \mu_m^*(\pi_m^*L + D') - m^{-1}\E_m' )$ are test configurations for $(X_m,L_m)$. As the result is unchanged by passing to a third degeneration of $X_m$ which dominates $\X_m$ and $\X_m'$, we may assume that  $\X_m=\X_m'$. The resulting line bundles on $\X_m$ are therefore semiample.
	
	As we have produced Fujita approximations for $(\X,L+D)$ and $(\X,L+D')$ compatible with Fujita approximations of $(X,L)$ itself, the conclusion concerning the Monge--Amp\`ere energies is a simple consequence, as 
 $\L_m^{n+1} \to \langle L+D\rangle^{n+1}$ and $L_m^n \to \langle L\rangle^n$. To conclude, we note that $\mu_m^*D- D_m$ is effective as it simply equals $m^{-1} \cE_m$, and as the same is true of $\mu_m^*D'- D_m'$, the result follows.  \end{proof}
 
We may now prove concavity.

\begin{proof}[Proof of Proposition \ref{prop:concave}]

For general ample line bundles, concavity of the Monge--Amp\`ere energy is a simple consequence of the Hodge index theorem \cite[Proposition 7.7]{bj:trivval}; by taking second derivatives, the claim reduces to the fact that when $D$ is vertical and $\L$ is a general relatively semiample line bundle on degeneration $\X$ of a projective variety $X$, we have $\L^{n-1}.D^2 \leq 0$, which is proven for example in Li--Xu \cite[Lemma 1]{lixu}. The conclusion thus holds when the line bundle on $X$ is merely semiample, either by continuity or by following the same line of proof.

We argue by reducing to the semiample case. For  $D,D'\in \Ver(\cX)$, we must prove $$E^\na(\cX,L+(1-t)D+tD') \geq  (1-t)E^\na(\cX,L+D)+tE^\na(\cX,L+D').$$ Fix $\epsilon>0$, and let $\X_m,D_m,D_m'$ be the output of Lemma \ref{prop:fujitatc}. Effectivity of $\mu_m^*D- D_m$ and  $\mu_m^*D- D_m$ implies $$ E^\na(\cX,L+(1-t)D+tD')\geq E^\na(\cX_m,L+(1-t)D_m+tD_m'),$$ by definition of the Monge--Amp\`ere operator as a volume, along with birational invariance of volume. Using this and Lemma \ref{prop:fujitatc} we see \begin{align*} 
       E^\na(\cX,L+(1-t)D+tD')&\geq E^\na(\cX_m,L+(1-t)D_m+tD_m'),\\
        &\geq (1-t)E^\na(\cX_m,L+D_m)+tE^\na(\cX_m,L+D'_m),\\
        &\geq (1-t)E^\na(\cX,L+D)+tE^\na(\cX,L+D')-\varepsilon,
\end{align*} where we also used the concavity of the Monge--Amp\`ere functional in the semiample case noted above. As this holds for all $\epsilon>0$, the result follows.\end{proof}

We may now characterise solutions of the Monge--Amp\`ere equation. We consider a divisorial measure $\mu = \{(F_j,\xi_j)\}_{j\in J}$, and fix a dominant, $\mu$-compatible degeneration $\X$ of $X$ with $\X_0 = \sum_{i\in I}b_iE_i$, so that $J\subseteq I$. For each $t\in \Ver(\X) \cong \R^{|I|}$ we thus obtain a divisor $D_t = \sum_i b_i t_i E_i \in \Ver(\X)$ (we note that we incorporate the $b_i$ into $D_t$, in contrast with our prior convention).

\begin{theorem}\label{thm:variationalE}
	The test configuration $(\cX,L+D_t)$ solves the Monge--Amp\`ere equation $\MA(\cX,L+D_t)=\mu$ if and only if
		\begin{equation}
			E^\na(\cX, L+D_t)-\langle \xi,t\rangle=\sup_{s\in  \Ver(\X)} \left\{E^\na(\cX,L+D_s)-\langle \xi,s\rangle\right\}.
		\end{equation}
Here, $\langle \xi,t\rangle$ is understood as $\langle \xi,\pi_{I,J}(t)\rangle=\sum_{j\in J} \xi_j t_j$.
\end{theorem}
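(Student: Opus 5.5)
The plan is to view $F(t):=E^\na(\cX,L+D_t)-\langle \xi,t\rangle$ as a concave function on $\Ver(\cX)\cong\R^{|I|}$ and to show that its gradient vanishes exactly when $\MA(\cX,L+D_t)=\mu$. Since a differentiable concave function attains its supremum precisely at its critical points, the equivalence then follows immediately. Concavity is Proposition \ref{prop:concave}, and $-\langle\xi,t\rangle$ is linear, so what remains is differentiability together with an identification of the partial derivatives.

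First I would check that everything is well defined on $\Ver(\cX)$, that is, invariant under $t\mapsto t+c(1,\dots,1)$. Since $\sum_i b_iE_i=\cX_0=\scO_{\pr^1}(1)$, this shift adds $c\,\scO_{\pr^1}(1)$ to $L+D_t$. By Proposition \ref{trusiani}(ii) and the definition of $E^\na$, this shift changes $E^\na$ by exactly $c$. Because $\sum_j\xi_j=1$, the term $\langle\xi,t\rangle$ also changes by $c$, so $F$ descends to $\Ver(\cX)$.

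Next comes differentiability. Fix $k$ large so that $L+D_t+\scO_{\pr^1}(k)$ is big and $L+D_t+\scO_{\pr^1}(k)-L$ is effective near the given $t$, as in Remark \ref{rmk:big}. By the $C^1$ property of volume on the big cone (\cite[Theorem A]{bfj:teissier}),
$$\frac{\partial}{\partial t_i}E^\na(\cX,L+D_t)=\frac{(n+1)\langle L+D_t+\scO_{\pr^1}(k)\rangle^n\cdot b_iE_i}{(n+1)\vol(L)}=\vol(L)^{-1}b_i\langle L+D_t\rangle^n\cdot E_i,$$
where the last equality uses Lemma \ref{asymptotic-MA} to discard the twist. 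This expression is exactly the $E_i$-weight of $\MA(\cX,L+D_t)$, and it is continuous in $t$ by Proposition \ref{prop:cont}, so $E^\na$ is $C^1$. The partial derivatives of $F$ are therefore $\MA(\cX,L+D_t)(E_i)-\xi_i$, with the convention $\xi_i=0$ for $i\in I\setminus J$.

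Finally, suppose $\MA(\cX,L+D_t)=\mu$. Here I would take care with the identification of components $E_i$ with the valuations in $\supp(\mu)$ via $\mu$-compatibility. Under that identification, the gradient of $F$ vanishes, and concavity gives $F(s)\le F(t)+\nabla F(t)\cdot(s-t)=F(t)$, so $t$ is a maximiser. Conversely, if $t$ maximises $F$ over all of $\R^{|I|}$, an interior point, then differentiability forces $\nabla F(t)=0$. That means the weight of $\MA(\cX,L+D_t)$ at $E_i$ equals $\xi_i$ for $i\in J$ and $0$ otherwise, i.e.\ $\MA(\cX,L+D_t)=\mu$.

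The main obstacle I expect is the differentiability step. The derivative formula must hold uniformly for a fixed twist $k$ in a neighbourhood of $t$, including at points where $L+D_t$ fails to be $E_i$-pseudoeffective, so that the partial derivative is $0$. This is handled by working with the globally big twisted bundle, where \cite[Theorem A]{bfj:teissier} applies regardless of $E_i$-pseudoeffectivity.
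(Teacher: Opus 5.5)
Your proposal is correct and follows the paper's route: concavity of $E^\na$ (Proposition \ref{prop:concave}) plus differentiability from the $C^1$ property of volume on the big cone. The partial derivatives are identified as the Monge--Amp\`ere weights minus $\xi_i$, so critical points, maximisers and solutions of $\MA(\cX,L+D_t)=\mu$ coincide. Your extra checks (shift invariance via $\sum_j\xi_j=1$, a single twist $k$ valid near $t$, and the first-order concavity inequality) make explicit what the paper's proof leaves implicit.
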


Note $E^\na(\cX,L+D_t+\scO_{\pr^1}(k))=E^\na(\cX,L+D_t)+k$, and $\langle \xi,t+k\rangle = \langle \xi,t\rangle + \sum_j \xi_j k = \langle \xi,t\rangle+k$, since $\mu$ is a divisorial measure. The functional $t\to E^\na(\cX, L+D_t)-\langle \xi,t\rangle$  can thus be viewed as a functional on $\Ver(\X)$, which is implicit in the statement. For the same reason, in proving the result, we may assume the $L+D_t$ involved are big. 

\begin{proof}

The functional $s \mapsto E^\na(\cX,L+D_s)-\langle \xi,s\rangle$ is concave, by concavity of $s \mapsto E^\na(\cX,L+D_s)$ and linearity of $s \mapsto \langle \xi,s\rangle.$ It is further differentiable on $\Ver(\X)$, by differentiability of volume on the big cone of a projective variety,  the fact that we may assume $L+D_s$ is actually big, and the definition of the Monge--Amp\`ere energy. It follows that its supremum is achieved at critical points, and that this supremum is independent of the critical point considered. 

It therefore suffices to prove that critical points are precisely solutions of the  Monge--Amp\`ere equation. We calculate for $\delta D = \sum_i (\delta t_i)E_i$ that $$\frac{d}{dt}\bigg|_{t=0}E^\na(\cX,L+D+t\delta D) = \vol(L)^{-1}\sum_i (\delta t_i)b_i\langle L+D\rangle^n\cdot E_i-\langle \xi, \delta t\rangle;$$ if this vanishes for all $\delta t_i$, it follows that $\vol(L)^{-1}b_i\langle L+D\rangle^n\cdot E_k = \xi_j$ for each $j$, and $\vol(L)^{-1}b_i\langle L+D\rangle^n\cdot E_k=0$ for $i\in I-J$, which implies the result. \end{proof}

The important aspect of this result is the following: while we do not expect (and do not prove) uniqueness of solutions of the Monge--Amp\`ere equation, each solution nevertheless achieves the same value of the functional $t\mapsto E^\na(\cX,L+D_t)-\langle \xi,t\rangle$ on $\R^{|I|}$.

\section{Divisorial stability and K-stability}\label{sect:3}

We now aim to show that uniform K-stability and divisorial stability of $(X,L)$, with $X$ smooth and $L$ big, are equivalent, through the Calabi--Yau theorem. This involves developing results around filtrations, most notably \textit{divisorial} filtrations. We will use these ideas to prove a result relating  the Monge--Ampère energy of a test configuration to the volume of the associated divisorial filtration. This, along with left differentiability of the norm of a measure, are the key steps towards proving the equivalence of  uniform K-stability and divisorial stability. We end the section with an application, by relating K-stability in the case $L=-K_X$ to prior work \cite{derreb:1, dar:zhangbigke}.

\subsection{Filtrations on the algebra of sections of a big line bundle}

Let us denote by $R(X,L)=\bigoplus_k R_k$ the section ring of $L$, with $R_k:=H^0(X,kL)$. A \textit{filtration} on $R(X,L)$ is the data, for each $\lambda\in\bbr$ and $k\in\mathbb{N}$, of a vector space $F^\lambda R_k\subseteq R_k$ such that:
\begin{enumerate}[(i)]
    \item $\cF^\lambda R_k\subseteq \cF^\mu R_k$ for $\lambda\geq \mu$;
    \item $\cF^\lambda R_k=\bigcap_{\mu<\lambda} \cF^\mu R_k$;
    \item there exists $e_-\in \bbr$ such that $\cF^\lambda R_k=R_k$ for all $\lambda<ke_-$;
    \item there exists $e_+\in\bbr$, $e_-\leq e_+$, such that $\cF^\lambda R_k=\{0\}$ for all $\lambda>ke_+$;
    \item for all $k,\ell$ and $\lambda,\mu$, $\cF^\lambda R_k\cdot \cF^\mu R_\ell\subseteq \cF^{\lambda+\mu}R_{k+\ell}$.
\end{enumerate}
The definitions imply that, for $k$ fixed, the dimension function
$$\lambda\mapsto \dim \cF^\lambda R_k$$
is piecewise constant, non-increasing, left-continuous, equal to $\dim R_k$ for $\lambda$ sufficiently small (uniformly in $k$) and to zero for $\lambda$ sufficiently large (likewise). It therefore has exactly $\dim R_k$ discontinuity points when counted with multiplicity, where the multiplicity at a discontinuity point $\lambda$ equals the positive integer $\dim \cF^\lambda R_k - \dim\cF^{\lambda+\varepsilon}R_k$ for any sufficiently small $\varepsilon>0$. This yields the naturally ordered set of \textit{jumping values} $\lambda_{k,i}(\cF_k)$ of $\cF R_k$. The multiplicativity condition $(v)$ provides good equidistribution properties of the jumping numbers as $k\to\infty$, while the boundedness conditions $(iii)$ and $(iv)$ ensure that the jumping numbers equidistribute to a compactly supported measure on $\bbr$. As a special case of those equidistribution properties, one can define the \textit{volume} $\vol(\cF)$ of the filtration $\cF$ to be the normalised limit
\begin{equation}\label{eq:vol1}
\vol(\cF):=\lim_{k\to\infty}k\mi \vol(\cF_k),
\end{equation}
where
\begin{equation}\label{eq:vol2}
\vol(\cF_k):=(\dim R_k)\mi \sum_i \lambda_{k,i}(\cF_k),
\end{equation}
which was shown to converge when $L$ is a big line bundle by \cite[Theorem A]{bouchen}.

We will be interested in explicit ways to compute this volume, which we do using volumes of subalgebras of $R(X,L)$. To that end, we first define the graded algebra
$$R(X,L)_{\geq \lambda,\cF}:=\bigoplus_k \cF^{k\lambda}R_k.$$
By \cite[Lemma 3.30]{book:xukstab} and \cite{bouchen}, we have
\begin{equation}\label{eq:bchen}
\vol(\cF):=e_-+\int_{e_-}^\infty \vol(R(X,L)_{\geq \lambda,\cF})\,d\lambda
\end{equation}
where
$$\vol(R(X,L)_{\geq\lambda,\cF}):=\lim_{k\to\infty}k^{-n}\dim \cF^{k\lambda}R_k$$
is a well-defined limit \cite[Corollary 1.11]{bouchen}, and $e_-$ is any real number satisfying condition $(iii)$ above. Note that this volume is zero for $\lambda>e_+$, with $e_+$ given by condition $(iv)$, so that this expression often appears integrated up to $e_+$ in the literature. Equivalently, in the sense of distributions, setting
$$\nu(\cF):=-\vol(L)\mi \frac{d}{d\lambda}\vol(R(X,L)_{\geq \lambda,\cF}),$$
we have
\begin{equation}\label{eq:bchen2}
	\vol(\cF)=\int_\bbr \lambda d\nu(\cF).
\end{equation}

We say that a basis $\{s_i\}$ of $R_k$ is \textit{adapted} to the filtration $\cF$ if, for all $\lambda\in\bbr$, $\mathrm{Span}\left\{s_i\in \cF^\lambda R_k\right\}=\cF^\lambda R_k$. Given such a basis, then one may describe the jumping values as
\begin{equation}\label{eq:succmin}
	\lambda_{k,i}(\cF_k)=\sup\{\lambda\in\bbr,\,s_i\in \cF^\lambda R_k\},
\end{equation}
which we order so that they form a nonincreasing sequence in $i$. 

Given now two filtrations $\cF$, one can find for each $k$ a basis $\{s_i\}_i$ of $R_k$ that is simultaneously adapted to $\cF_k$ and $\cF'_k$, and define their \textit{relative jumping numbers} as the (reordered to be nonincreasing) sequence
\begin{equation}\label{eq:rel-jump}\lambda_i(\cF_k,\cF'_k):=\sup\left\{\lambda\in\bbr,\,s_i\in \cF'{}^\lambda R_k\right\}-\sup\left\{\lambda\in\bbr,\,s_i\in \cF^\lambda R_k\right\}.\end{equation}
One can then define a genuine \textit{distance} between the two filtrations \cite{goldmaniwahori} as
$$d_\infty(\cF_k,\cF'_k):=\max_i |\lambda_{k,i}(\cF_k,\cF'_k)|.$$
Importantly, this distance controls the volume: by \cite[Proposition 2.14 $(iv)$]{boueri}, we have
$$
	|\vol(\cF_k)-\vol(\cF'_k)|\leq \dim R_k \cdot d_\infty(\cF_k,\cF'_k).
$$
In particular, from the definition of the volume of $\cF$ we obtain
\begin{equation}\label{eq:d1dinfty}
	|\vol(\cF)-\vol(\cF')|\leq \limsup_{k\to\infty}k\mi d_\infty(\cF_k,\cF'_k).
\end{equation}

\subsection{The volume of divisorial filtrations}

Given a collection $\{E_i,t_i\}$, we may define a filtration of $R(X,L)$ by
\begin{equation}\label{eq:divfiltr}
    \cF^\lambda R_k=\{s\in R_k,\,\min_i\{\ord_{E_i}(s)+kt_i\}\geq \lambda\}.
\end{equation}
We call such a filtration a \textit{divisorial} filtration. A special case is that of a single valuation with $t=0$, in which case one recovers the usual valuative filtrations considered in e.g.\ \cite{blumjonsson}. In the case where $L$ is ample, some of the material in this section appears in \cite{bj:kstab1}.

We next prove a selection of results concerning the volume of divisorial filtrations. We first provide an  explicit expression of a divisorial filtration. In what follows, we fix a collection $\{E_i,t_i\}_{i\in I}$ where either $E_i$ is a rational positive multiple of a prime divisor over $X$, or is the trivial valuation on $X$ (which we denote $E_{\triv}$). By convention, we ask that $0\in I$ if and only if $E_0 = E_{\triv}$ is the trivial valuation. This input produces a divisorial filtration  of $R(X,L)$, which we denote $\cF:=\cF_{S,t}$. Let $\gamma_i=\gamma(L,E_i)$ be the pseudoeffective threshold of $E_i$ with respect to $L$, where we set $\gamma(E_{\triv}) = 0$.

\begin{theorem}\label{thm:descrfiltr} The divisorial filtration $\cF$ satisfies
\begin{enumerate}[(i)]
	\item $R(X,L)_{\geq \lambda,\cF}=\{0\}$ for $\lambda>\lambda_{\max}:=\min_{i\in I}(\gamma_i + t_i)$;
	\item $R(X,L)_{\geq \lambda,\cF}=R(X,L)$ for $\lambda<\lambda_{\min}:=\min_{i\in I} t_i$;
	\item $R(X,L)_{\geq \lambda,\cF}=R\left(X,L-\sum_{i\in I-\{0\}}\max\{\lambda-t_i,0\}E_i\right)$ for $\lambda_{\min}\leq \lambda\leq\lambda_{\max}$.
\end{enumerate} In particular,
$$\vol(\cF)=e_-+\vol(L)\mi\int_{e_-}^\infty\vol\left(L-\sum_{i\in I-\{0\}}\max\{\lambda-t_i,0\}E_i\right)\,d\lambda$$
for any $e_-\leq \lambda_{\min}$.
\end{theorem}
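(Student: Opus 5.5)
The plan is to unwind the definition \eqref{eq:divfiltr} degree by degree, identify each graded piece $\cF^{k\lambda}R_k$ with a space of sections of a twisted line bundle on a common birational model, and then read off the volume formula from \eqref{eq:bchen}.

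\textbf{Reduction to a common model.} First I would pass to a smooth birational model $\pi:Y\to X$ on which every nontrivial $E_i$ is realised as a ($\Q$-multiple of a) prime divisor. Then $H^0(X,kL)=H^0(Y,k\pi^*L)$, and $\ord_{E_i}(s)$ is the coefficient of $E_i$ in $\div(s)$ on $Y$. For the trivial valuation, $\ord_{E_\triv}(s)=0$ for $s\neq 0$, so the $i=0$ condition in \eqref{eq:divfiltr} reads simply $kt_0\geq k\lambda$.

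\textbf{Description of the graded pieces.} For $\lambda$ fixed, a section $s\in R_k$ lies in $\cF^{k\lambda}R_k$ if and only if $\ord_{E_i}(s)\geq k(\lambda-t_i)$ for all $i$. For the nontrivial $E_i$ this is equivalent to $\ord_{E_i}(s)\geq k\max\{\lambda-t_i,0\}$, since orders of vanishing are nonnegative. Hence, whenever $\lambda\leq t_0$ (or $0\notin I$), we get
$$\cF^{k\lambda}R_k=H^0\Big(Y,k\big(\pi^*L-\textstyle\sum_{i\neq 0}\max\{\lambda-t_i,0\}E_i\big)\Big),$$
with the usual convention (rounding down) for real coefficients. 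This gives (iii) directly, and (ii) is the special case $\lambda<\lambda_{\min}$, where every max vanishes.

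\textbf{Vanishing above $\lambda_{\max}$.} For (i), suppose $\lambda>\gamma_i+t_i$ for some $i$. If $i\neq 0$, the twisted bundle is at most $L-(\lambda-t_i)E_i$, which has no sections in any degree once $\lambda-t_i>\gamma_i$. The boundary of the pseudoeffective threshold needs a little care: I would use that $\gamma(L,E_i)$ also bounds the maximal vanishing order of sections, so $\ord_{E_i}(s)\leq k\gamma_i$ for all $s\in R_k$. If $i=0$, the condition $\lambda>t_0$ kills everything, since $\gamma_0=0$. This step is where the convention $\gamma(E_\triv)=0$ enters, and where the trivial valuation caps the range at $t_0$. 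It is consistent with (iii) because $\lambda_{\max}\leq t_0$ whenever $0\in I$.

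\textbf{Volume formula.} For $\lambda_{\min}\leq\lambda\leq\lambda_{\max}$, item (iii) gives $\vol(R_{\geq\lambda,\cF})$ equal to the volume of the twisted $\R$-divisor. For $\lambda>\lambda_{\max}$ that volume is zero, either because the divisor is not big or by (i). Plugging this into \eqref{eq:bchen}, with the normalisation by $\vol(L)$ coming from the definition of the volume of a subalgebra relative to $R$, yields the stated formula. Independence of $e_-\leq\lambda_{\min}$ follows because the integrand equals $\vol(L)$ there.

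\textbf{Main obstacle.} The main technical point is passing from the round-down description at finite $k$ to the volume of the $\R$-divisor in the limit. I would handle this by continuity of the volume on the big cone. At the threshold $\lambda=\lambda_{\max}$ the integrand is a single point, which does not affect the integral.
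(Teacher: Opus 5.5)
Your route is the paper's. You unwind \eqref{eq:divfiltr} degree by degree and treat each nontrivial $E_i$ by the cases $\lambda-t_i\le 0$, $0\le\lambda-t_i\le\gamma_i$, $\lambda-t_i>\gamma_i$, using that no nonzero $s\in R_k$ has $\ord_{E_i}(s)>k\gamma_i$. The trivial valuation then cuts everything off above $t_0$, and you feed the description into \eqref{eq:bchen}. Parts (i) and (ii) are fine. Part (iii) is fine provided the $E_i$ have pairwise distinct underlying prime divisors, which you and the paper both assume tacitly: constraints along a common prime combine by a maximum, not a sum. Your reason for independence of $e_-$ (the integrand is $\vol(L)$ below $\lambda_{\min}$) is more direct than the paper's appeal to continuity of the volume.

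The round-down issue you single out as the main obstacle is harmless. The step that genuinely fails is ``for $\lambda>\lambda_{\max}$ that volume is zero, either because the divisor is not big or by (i)''.

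\begin{itemize}
\item Item (i) only gives $\vol(R(X,L)_{\geq\lambda,\cF})=0$. To replace this integrand by $\vol(L-\sum_{i\neq 0}\max\{\lambda-t_i,0\}E_i)$ on all of $[e_-,\infty)$, you need the twisted divisor itself to have zero volume beyond $\lambda_{\max}$.
\item That holds when $\lambda_{\max}=\gamma_j+t_j$ for some $j\neq 0$, in particular when $0\notin I$, since the divisor is then at most $L-(\lambda-t_j)E_j$.
\item It fails in general when $0\in I$ and $\lambda_{\max}=t_0$. The sum omits the $i=0$ term, so the twisted divisor can stay big past $t_0$.
\end{itemize}

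For instance, $I=\{0\}$ with $t_0=0$ is the trivial filtration, of volume $0$, while the displayed formula gives $+\infty$. For $I=\{0,1\}$ with $t_0=t_1=0$ the filtration is again trivial, but the formula gives $S_L(E_1)>0$.

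What your argument actually proves is the formula with upper limit $\lambda_{\max}$ in place of $\infty$. Equivalently, you can keep the trivial valuation in the sum, with the convention implicit after Definition~\ref{def:SL} that its term kills the integrand beyond $t_0$. The paper's proof makes exactly the same silent replacement, so this is a defect of the statement as written rather than a departure from the paper. Still, your final step as justified is false in that case.
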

\begin{proof}
    The divisorial filtration is defined by 
    \begin{align*}
    		\cF^{k\lambda} R_k=\{s\in R_k,\min_{i\in I}\{\ord_{E_i}(s)+kt_i\}\geq \lambda\}.
    	\end{align*} 	If $0\in I$, so that $E_0$ is the trivial valuation, the condition $\ord_{E_{\triv}}(s)+kt_i\geq \lambda$ holds for all $s\in R_k$ when $\lambda\leq kt_0$, and only for $0\in R_k$ when $\lambda>kt_0$.

	For $i\neq 0\in I$, by definition of the pseudoeffective threshold, if $\gamma>\gamma_i$ then $R(X,L-\gamma E_i)=\{0\}$. The condition $\ord_{E_i}(s)+kt_i\geq k\lambda$ may be rewritten as $s\in H^0(kL-k(\lambda-t_i)E_i)$, hence:
    \begin{enumerate}[(i)]
        \item if $\lambda-t_i\leq 0$, then the condition $\ord_{E_i}(s)+kt_i\geq \lambda$ is automatic; 
        \item if $0\leq \lambda-t_i\leq \gamma_i$, then $\lambda-t_i$ is a candidate for the supremum defining $\gamma_i$, and in particular the condition $\ord_{E_i}(s) + kt_i\geq \lambda$ is nontrivial;
        \item if $\lambda-t_i>\gamma_i$ then the condition $\ord_{E_i}(s) + kt_i$ is satisfied only for $0\in R_k$.
    \end{enumerate}

    In summary, it follows that $\cF^{k\lambda} R_k=\{0\}$ for $\lambda>\min_{i\in I}\{\gamma_i + t_i\}$, while $\cF^{k\lambda} R_k=R_k$ for $\lambda<\min_{i\in I} t_i$, and for $\lambda$ in-between those bounds
    \begin{equation}\label{eq:sectionsfiltration}
    		\cF^{k\lambda} R_k=H^0\left(X,kL-k\sum_{i\in I-\{0\}}\max\{\lambda-t_i,0\}E_i\right).
    	\end{equation}
  It follows from Equation \eqref{eq:bchen} that for any $e_-< \lambda_{\min}$
    \begin{align*}
        \vol(\cF)&=e_-+\vol(L)\mi\int_{e_-}^\infty \vol(R(X,L)_{\geq \lambda,\cF})\,d\lambda,\\
        &=e_-+\vol(L)\mi\int_{e_-}^\infty\vol\left(L-\sum_{i\in I-\{0\}}\max\{\lambda-t_i,0\}E_i\right)\,d\lambda,\\
        &=e_-+\vol(L)\mi\int_{e_-}^\infty\vol\left(L-\sum_{i\in I-\{0\}}\max\{\lambda-t_i,0\}E_i\right)\,d\lambda.
\end{align*} Continuity of volume to the boundary of the big cone further implies we may take $e_- = \lambda_{\min}$.\end{proof}

\begin{remark}\label{rmk:filtr}
	One sees that only the index set $\{i\in I,t_i\leq \lambda_{\max}\} \subset I$ contributes to constraints defining the filtration in the statement of Theorem \ref{thm:descrfiltr}, so that
$$\vol(\cF)=e_-+\vol(L)\mi\int_{e_-}^\infty\vol\left(L-\sum_{i,\,t_i\leq\lambda_{\max}}\max\{\lambda-t_i,0\}E_i\right)\,d\lambda.$$
In particular, if the trivial valuation belongs to the collection  $\{E_i,t_i\}$, i.e.\ $0\in I$, then the indices with $t_i>t_0$ do not contribute.
\end{remark}

\begin{example}\label{ex:tc}
A test configuration $(\cX,\cL)$ canonically induces a divisorial filtration, a construction going back to Witt Nystr\"om \cite{wn:tcoko}: the associated filtration $\cF^\lambda R_k$ is the set of sections $s\in R_k$ such that $t^{-\lambda}\tilde s$ extends to a regular section across $\cX_0$, where $\tilde s$ is the rational section of $k\cL$ induced by the $\bbc^*$-action. This filtration can be described concretely as follows: if $\cX_0=\sum b_i E_i$, and $\cL=\pi^*L+D$, then
$$\cF^\lambda R_k=\{s\in R_k,\,\min_i\{b_i\mi\ord_{E_i}(s)+kb_E\mi \ord_{E_i}(D)\}\geq \lambda\},$$
see \cite[Lemma 5.17]{bhj:duistermaat} and \cite[Proposition A.3]{bj:kstab1}. The proper transform of $X\times \{0\}$ under the birational map $\X \dashrightarrow X\times\pr^1$ produces a canonical component of $\X_0$, which induces the trivial valuation $E_{\triv}$. 

If $\ord_{E_i}(D)\geq 0$ for all $i$, then $\cF$ is a shift of the trivial filtration by Remark \ref{rmk:filtr}. Thus for example, if $t_0=0$ and $t_i>t_0=0$ for all $i\neq 0$, the associated filtration is the trivial filtration. 	Conversely, if $L$ is ample and $\cL$ is relatively semiample, it is automatic from \cite[Theorem 5.16]{bhj:duistermaat} that $t_i\leq t_0$ for all $i$.

\end{example}

We define $\cF^\div$ to be the set of divisorial filtrations on $X$. If $S\subset X^\div$ is a finite subset, we further write $\cF^\div_S$ for the set of divisorial filtrations involving only the divisorial valuations in $S$. Given $t\in \bbr^{|S|}$, we denote by $\cF_{S,t}$ the associated filtration
$$\cF_{S,t}^\lambda R_k:=\{s\in R_k,\,\min_{\nu_i\in S}\{\nu_i(s)+kt_i\}\geq \lambda\}.$$ 

\begin{remark}We emphasise  that there is  no canonical way to describe the coefficients of a given $\cF=\cF_{S,t}\in \cF_S$: a filtration does not canonically induce a collection $\{E_i,t_i\}$. For example, if  $E_i\in S$ does not contribute to the  constraints defining the filtration $\cF_{S,t}$, then for all $c>0$, $t'=(t_1,\dots,t_i+c,\dots,t_{|S|})$ satisfies $\cF_{S,t}=\cF_{S,t'}$.\end{remark}

As a consequence, we obtain the following, which will be essential to a compactness argument  in the proof of Proposition \ref{prop:d1dinfty}. A crucial aspect is that the bound we obtain depends purely only on $S$, and not on the given filtration.
\begin{corollary}\label{coro:compactness}
    Suppose $\cF$ is a divisorial filtration with $\lambda_{\min}=0$. If $\cF$ is nontrivial, then for any divisorial filtration  $\cF_{S,t} = \cF$ with $S\subset X^\div$  and $t\in \R^{|S|}$, we may find a $t'\in \bbr^{|S|}$ satisfying $$t'_i\leq \max_{i} \gamma(L,E_i)+1$$  for all $i$, such that   $\cF=\cF_{S,t'}$.\end{corollary}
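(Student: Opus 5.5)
The plan is to use the explicit description of divisorial filtrations in Theorem \ref{thm:descrfiltr}, together with the observation in Remark \ref{rmk:filtr} that only indices with $t_i\leq \lambda_{\max}$ contribute. We will show that $\lambda_{\max}$ is bounded above purely in terms of $S$. Any $t_i$ exceeding this bound can then be lowered to a fixed value without changing the filtration.

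\textbf{Step 1: bound $\lambda_{\max}$.} Fix $\cF=\cF_{S,t}$ with $\lambda_{\min}=\min_i t_i=0$. By Theorem \ref{thm:descrfiltr}(i), $\lambda_{\max}=\min_i(\gamma_i+t_i)$. Choose $i_0$ with $t_{i_0}=0$. Then
\[
\lambda_{\max}\leq \gamma_{i_0}\leq \max_i\gamma(L,E_i)=:\gamma_S ,
\]
where we use the convention $\gamma(E_\triv)=0$. The bound $\gamma_S$ depends only on $S$.

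\textbf{Step 2: truncate.} Define
\[
t'_i=\min\{t_i,\gamma_S+1\}.
\]
Since $t_{i_0}=0\le\gamma_S+1$, we still have $\min_i t'_i=0$, so $\lambda'_{\min}=0$.

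We need $\cF_{S,t'}^\lambda R_k=\cF_{S,t}^\lambda R_k$ for all $k,\lambda$. Work with the normalised parameter $\lambda/k$ and use the case analysis of Theorem \ref{thm:descrfiltr}; cleanest is to check equality of each graded piece directly from the definition \eqref{eq:divfiltr}.

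\textbf{Step 3: check equality for $\lambda\leq k\lambda_{\max}$.} Only the truncated indices, those with $t_i>\gamma_S+1$, can behave differently. For such an index and any $s\in R_k$, both $\ord_{E_i}(s)+kt_i$ and $\ord_{E_i}(s)+kt'_i$ are at least $k(\gamma_S+1)>k\lambda_{\max}\ge\lambda$. So these constraints are automatic on both sides, and the pieces agree. Lowering some $t_i$ can only shrink the filtration, so $\cF_{S,t'}\subseteq\cF_{S,t}$ in general.

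\textbf{Step 4: check equality for $\lambda> k\lambda_{\max}$ (the main obstacle).} The delicate point is that Theorem \ref{thm:descrfiltr}(i) only describes the algebra $R(X,L)_{\ge\lambda}$ asymptotically, not every individual graded piece. Two cases arise.

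If $\lambda>k\lambda_{\max}$ is witnessed by an untruncated index $j$ (one with $\gamma_j+t_j=\lambda_{\max}$), then $t'_j=t_j$. The condition $\ord_{E_j}(s)\ge \lambda-kt_j>k\gamma_j$ forces $s=0$, since $kL-cE_j$ has no nonzero sections when $c>k\gamma_j$. Both sides then vanish.

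It remains to treat the case where the minimum defining $\lambda_{\max}$ is attained at a truncated index. This is impossible: truncated indices have $\gamma_i+t_i>\gamma_S+1>\lambda_{\max}$, while the untruncated index $i_0$ has $\gamma_{i_0}+t_{i_0}\le\gamma_S$.

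The one step to verify carefully is that for $c>k\gamma_j$ there are indeed no nonzero sections vanishing to order $c$. This holds because a nonzero section of $kL-cE_j$ makes $L-(c/k)E_j$ effective, hence pseudoeffective, and one then perturbs slightly to contradict the definition of $\gamma_j$ as a supremum over big classes. Bigness is open, so the pseudoeffective threshold equals the big threshold, and this gives the contradiction. The trivial valuation is handled separately: with $\gamma=0$, its constraint is simply $\lambda\le kt_0$. The same argument then gives $t'_i\le\max_i\gamma(L,E_i)+1$ with $\cF_{S,t'}=\cF$.
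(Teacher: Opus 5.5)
Your proposal is correct and follows essentially the paper's argument. You bound $\lambda_{\max}=\min_i(\gamma_i+t_i)$ by $\gamma_{i_0}\le\max_i\gamma(L,E_i)$ using an index with $t_{i_0}=0$, and then move the coefficients that do not contribute without changing the filtration; the paper resets every $t_j>\lambda_{\max}$ to a value in $(\lambda_{\max},\lambda_{\max}+1)$, while you cap at $\max_i\gamma(L,E_i)+1$. Your Step 4 concern is unfounded, since Theorem \ref{thm:descrfiltr} already describes every graded piece $\cF^{k\lambda}R_k$ and its proof contains your no-sections argument, so citing Remark \ref{rmk:filtr} suffices, although your direct verification is harmless.
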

\begin{proof}
	
	By Theorem \ref{thm:descrfiltr}, $\lambda_{\min}=\min_i t_i$. The hypothesis that $\lambda_{\min}=0$ thus implies that, for any $j$ with $t_j = 0$,   the quantity $\lambda_{\max}$ satisfies $$\lambda_{\max}=\min_{i}\{\gamma_i+t_i\}\leq \gamma_j.$$ In particular, $\lambda_{\max}\leq \max_{i}\gamma(L,E_i)$.
	
	On the other hand, for an arbitrary divisorial filtration (not necessarily satisfying $\lambda_{\min}=0$), Theorem \ref{thm:descrfiltr} implies  that the indices $j$ with $t_j> \lambda_{\max}$ do not contribute to the constraints defining the filtration, by Remark \ref{rmk:filtr}. We therefore may replace $t_j$ with any  $t'_j$ satisfying $\lambda_{\max}<t'_j<\lambda_{\max}+1$, without changing the resulting filtration. Combining this with the previous paragraph proves the result.
\end{proof}

A similar idea produces the following.

\begin{lemma}\label{lem:incl} For any  $S\subseteq T \subset X^\div$, any divisorial filtration of the form $\cF_{S,s}$ can be extended to a divisorial filtration of the form $\cF_{T,t}$, where the $S$-components of $t$ equal  $s$, without changing the resulting filtration.
\end{lemma}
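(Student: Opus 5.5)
The idea is to give the new valuations in $T\setminus S$ coefficients so large that they never constrain the filtration, in the spirit of Remark \ref{rmk:filtr} and Corollary \ref{coro:compactness}. Concretely, I would set $t_i = s_i$ for $\nu_i\in S$ and, for $\nu_j\in T\setminus S$,
$$t_j := \lambda_{\max}(\cF_{S,s}) + 1,\qquad \lambda_{\max}(\cF_{S,s}) = \min_{i\in S}(\gamma_i+s_i).$$
Any larger constant works equally well.

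The key step is to show that $\cF_{T,t}^\lambda R_k=\cF_{S,s}^\lambda R_k$ for all $k$ and $\lambda$. Since the minimum defining $\cF_{T,t}$ runs over more terms, we always have $\cF_{T,t}^\lambda R_k\subseteq \cF_{S,s}^\lambda R_k$. For the converse I would split into cases on $\lambda$.

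\begin{enumerate}[(i)]
\item If $\lambda\le k\,t_j$ for every $\nu_j\in T\setminus S$, then each new constraint $\nu_j(s)+kt_j\ge\lambda$ holds automatically, because $\nu_j(s)\geq 0$ for $s\in R_k$. The two sets therefore coincide.
\item If $\lambda> k\,t_j$ for some such $j$, then $\lambda/k>\lambda_{\max}(\cF_{S,s})$. I would then show directly that $\cF_{S,s}^\lambda R_k=\{0\}$, in which case both sides are zero.
\end{enumerate}

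The second case carries the real content. Pick $i\in S$ attaining the minimum in $\lambda_{\max}$. A nonzero $s\in\cF_{S,s}^\lambda R_k$ would satisfy $\ord_{E_i}(s)\ge \lambda-ks_i>k\gamma_i$. This means $kL-k\gamma' E_i$ is effective for some $\gamma'>\gamma_i$, contradicting the definition of the pseudoeffective threshold. To be careful, I would use that effectivity of $L-\gamma'E_i$ for $\gamma'>\gamma_i$ forces $L-\gamma E_i$ to be big for $\gamma\in(\gamma_i,\gamma')$, since it is a sum of the big class $L-\gamma'' E_i$ (with $\gamma''<\gamma_i$) and an effective class. The trivial valuation needs a separate check: if $0\in S$, then $\gamma_0=0$, and $s$ must vanish once $\lambda>ks_0$, which is consistent with the above.

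I expect this exact (rather than merely asymptotic) vanishing for every $k$ to be the only delicate point. Theorem \ref{thm:descrfiltr}(i) only asserts vanishing of the graded algebra $R(X,L)_{\ge\lambda,\cF}$ for $\lambda>\lambda_{\max}$, so I will re-derive it levelwise as above, using pseudoeffectivity thresholds.
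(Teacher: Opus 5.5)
Your argument is correct and is essentially the paper's. The paper also gives the coefficients indexed by $T\setminus S$ values large enough that the new constraints only bite where $\cF_{S,s}$ already vanishes, and then concludes from Theorem \ref{thm:descrfiltr}; there the threshold is $\max_{i\in S}(s_i+\gamma_i)$, whereas you show it suffices to exceed $\lambda_{\max}=\min_{i\in S}(s_i+\gamma_i)$. Your levelwise re-derivation is harmless but not needed: $R(X,L)_{\geq\lambda,\cF}=\bigoplus_k\cF^{k\lambda}R_k=\{0\}$ already says every graded piece vanishes, and your check that $L-\gamma'E_i$ cannot be effective for $\gamma'>\gamma_i$ is exactly the step the paper's proof of that theorem takes for granted.
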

\begin{proof}
	For any $s\in \bbr^{|S|}$ with $\cF=\cF_{S,s}$, we may  extend $s$ to some $t\in \bbr^{|T|}$ such that its coefficients corresponding to $T\setminus S$ are larger than $\max_{i} \{t_i+ \gamma(L,E_i)\}$. With this choice, it follows from Theorem \ref{thm:descrfiltr} that  $\cF_{S,s}=\cF_{T,t}$.\end{proof}

 Conversely, for $S\subseteq T \subset X^\div$ finite, we may produce a restriction map $\cF^\div_T\to \cF^\div_S$, simply by considering the projection  $\pi: \R^{|T|}\to \R^{|S|}$ onto the subspace $\R^{|T|}\to \R^{|S|}$. While this is not well-defined at the level of filtrations (but rather well-defined at the level of representations of divisorial filtrations in the form $\cF_{T,t}$ for some $t$, such representations being nonunique for a given divisorial filtration), we record the following volume inequality, which uses this notation.

\begin{lemma}\label{lem:res}

For $t\in \bbr^{|T|}$, the volume of the resulting divisorial filtration $\cF_{T,t}$ satisfies $$ \vol(\cF_{T,t}) \leq\vol(\cF_{S,\pi(t)}).$$
\end{lemma}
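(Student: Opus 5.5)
The plan is to compare the two filtrations at the level of graded pieces, and then feed the resulting inclusion into the volume formula of Theorem \ref{thm:descrfiltr} (equivalently Equation \eqref{eq:bchen}).

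\textbf{Inclusion of filtrations.} By definition,
$$\cF_{T,t}^\lambda R_k=\{s\in R_k,\,\min_{\nu_i\in T}\{\nu_i(s)+kt_i\}\geq \lambda\}.$$
Since $S\subseteq T$, a minimum over $T$ is at most the corresponding minimum over $S$ with the same coefficients $\pi(t)$. So any $s$ with $\min_{T}\{\nu_i(s)+kt_i\}\geq\lambda$ also satisfies $\min_{S}\{\nu_i(s)+k\pi(t)_i\}\geq\lambda$. This gives, for all $k$ and $\lambda$,
$$\cF_{T,t}^\lambda R_k\subseteq \cF_{S,\pi(t)}^\lambda R_k.$$
In particular the graded subalgebras satisfy $R(X,L)_{\geq\lambda,\cF_{T,t}}\subseteq R(X,L)_{\geq\lambda,\cF_{S,\pi(t)}}$, and hence
$$\vol(R(X,L)_{\geq\lambda,\cF_{T,t}})\leq \vol(R(X,L)_{\geq\lambda,\cF_{S,\pi(t)}})$$
for every $\lambda$.

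\textbf{Comparison of volumes.} Choose a common $e_-$ below both $\min_T t_i$ and $\min_S \pi(t)_i$; taking $e_-\leq \min_T t_i$ suffices, since this minimum is at most the minimum over $S$. This $e_-$ satisfies condition (iii) for both filtrations. Applying Equation \eqref{eq:bchen} (the formula in Theorem \ref{thm:descrfiltr}) to both filtrations with this common $e_-$, the integrands are pointwise ordered. Integrating yields $\vol(\cF_{T,t})\leq\vol(\cF_{S,\pi(t)})$.

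Alternatively, one could compare directly through Theorem \ref{thm:descrfiltr}(iii). There the integrand for $T$ is the volume of $L-\sum_{T}\max\{\lambda-t_i,0\}E_i$, which differs from the $S$-version by subtracting further effective divisors; monotonicity of volume then gives the same inequality. This route also makes it clear that the bound holds where the volume is zero (the non-big range).

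\textbf{Main obstacle.} The only delicate point is the normalisation of $e_-$ and the role of the trivial valuation. If $E_\triv\in T\setminus S$, it contributes a hard cutoff at $\lambda=kt_0$ for $T$ but not for $S$. The cleanest fix is to argue via the subalgebra inclusions rather than the explicit formula (iii), since Equation \eqref{eq:bchen} holds for any $e_-$ satisfying (iii), and the containment handles the trivial valuation automatically. It remains to check that both integrals are finite, which follows from condition (iv), i.e.\ from $\lambda_{\max}$ being finite for $\cF_{S,\pi(t)}$. If $S$ is empty the right-hand filtration would be degenerate, so I would assume $S$ is nonempty, as implicit in the statement.
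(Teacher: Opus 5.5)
Your proof is correct and follows the paper's argument: you use the inclusion $\cF_{T,t}^\lambda R_k\subseteq\cF_{S,\pi(t)}^\lambda R_k$ (a minimum over the larger set $T$ is smaller), then monotonicity of the volumes of the graded subalgebras $R(X,L)_{\geq\lambda,\cF}$, then integration via Equation \eqref{eq:bchen}. The paper leaves implicit the choice of a common $e_-\leq\min_T t_i$ valid for both filtrations, which you supply. You are also right to prefer the subalgebra inclusion over the explicit formula of Theorem \ref{thm:descrfiltr}(iii), since that formula's range of validity differs between $S$ and $T$.
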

\begin{proof}
	Since $S\subseteq T$ we have
    \begin{align*}
    		\cF_{T,t}^{\lambda} R_k&=\{s\in R_k,\,\min_{i \in T}\{\ord_{E_i}(s)-kt_i\}\geq \lambda\}\\
    		&\subseteq \{s\in R_k,\,\min_{j \in S}\{\ord_{E_i}(s)-kt_i\}\geq \lambda\}\\
    		&=\cF^{\lambda}_{S,\pi(t)} R_k.
    	\end{align*}
    for all $\lambda$, and hence $\vol(R(X,L)_{\geq \lambda,\cF_{T,t}})\leq\vol(R(X,L)_{\geq \lambda,\cF_{S,\pi(t)}})$. The result thus follows from Equation \eqref{eq:bchen}.
\end{proof}

Finally, we will use the following result on continuity of the volume upon changing the coefficients of a divisorial filtration.

\begin{proposition}\label{prop:d1dinfty}
	For $S\subset X^\div$  and any  $t,t'\in \bbr^{|S|}$, we have
	$$|\vol(\cF_{S,t})-\vol(\cF_{S,t'})|\leq \norm[t-t']_\infty.$$
\end{proposition}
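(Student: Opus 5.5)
The most direct route is to compare the two filtrations levelwise and then pass to the volume. A cleaner alternative uses the explicit volume formula of Theorem \ref{thm:descrfiltr}. Set $c:=\norm[t-t']_\infty$, so that $t_i - c\leq t'_i\leq t_i+c$ for every $i$.

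\emph{First step (inclusions).} For each $k$ and $\lambda$, if $s\in\cF_{S,t}^{\lambda}R_k$, then $\min_i\{\nu_i(s)+kt_i\}\geq\lambda$. Since $kt'_i\geq kt_i-kc$, this gives $\min_i\{\nu_i(s)+kt'_i\}\geq \lambda-kc$. Hence
$$\cF_{S,t}^{\lambda}R_k\subseteq \cF_{S,t'}^{\lambda-kc}R_k,$$
and symmetrically with $t,t'$ exchanged.

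\emph{Second step (volume comparison).} Using the definition $\cF_{S,t}^\lambda R_k=\{s:\min_i\{\nu_i(s)+kt_i\}\geq\lambda\}$, the graded pieces satisfy
$$R(X,L)_{\geq\lambda,\cF_{S,t}}\subseteq R(X,L)_{\geq\lambda-c,\cF_{S,t'}},$$
because $\cF^{k\lambda}_{S,t}R_k\subseteq\cF^{k(\lambda-c)}_{S,t'}R_k$. Therefore
$$\vol(R(X,L)_{\geq\lambda,\cF_{S,t}})\leq\vol(R(X,L)_{\geq\lambda-c,\cF_{S,t'}}).$$
Insert this into Equation \eqref{eq:bchen}. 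Choose $e_-$ valid for both filtrations, and write $V'(\lambda)$ for the normalised volume of $R(X,L)_{\geq\lambda,\cF_{S,t'}}$, with the normalisation by $\vol(L)$ implicit as in Theorem \ref{thm:descrfiltr}. Then
$$\vol(\cF_{S,t})\leq e_-+\int_{e_-}^\infty V'(\lambda-c)\,d\lambda = e_-+\int_{e_--c}^{e_-} V'(u)\,du+\int_{e_-}^\infty V'(u)\,du .$$
On $[e_- - c, e_-]$ the normalised volume $V'$ equals $1$, since the filtration is trivial there. So the middle integral equals $c$, and the right-hand side is $\vol(\cF_{S,t'})+c$. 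Exchanging $t$ and $t'$ gives the reverse inequality, which proves the claim.

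\emph{Alternative via the explicit formula.} One can argue directly from Theorem \ref{thm:descrfiltr}. Since $\max\{\lambda-t'_i,0\}\leq\max\{\lambda-c-t_i,0\}+\ldots$ fails to give a clean bound, it is easier to observe the following: the integrand $\lambda\mapsto \vol(L-\sum_i\max\{\lambda-t_i,0\}E_i)$ is monotone in each $t_i$, and replacing $t$ by $t+c\mathbf{1}$ shifts the integrand by exactly $c$. Sandwiching gives $t-c\mathbf 1\leq t'\leq t+c\mathbf 1$. By monotonicity, $\vol(\cF_{S,t-c\mathbf1})\leq\vol(\cF_{S,t'})\leq\vol(\cF_{S,t+c\mathbf1})$. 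By the shift property, the two outer terms equal $\vol(\cF_{S,t})\mp c$.

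\emph{Main obstacle.} The key facts are monotonicity of $\vol(\cF_{S,t})$ in each $t_i$ and the translation equivariance $\vol(\cF_{S,t+c\mathbf 1})=\vol(\cF_{S,t})+c$. Monotonicity is immediate from the inclusion of filtrations. Translation equivariance follows from the substitution $\lambda\mapsto\lambda+c$ in the formula of Theorem \ref{thm:descrfiltr}, together with $e_-\mapsto e_-+c$. The only care needed is the handling of $e_-$ and of the trivial valuation, which Theorem \ref{thm:descrfiltr} and Remark \ref{rmk:filtr} already cover. One could instead route the argument through \eqref{eq:d1dinfty} by bounding $d_\infty(\cF_k,\cF'_k)\leq kc$, but that requires simultaneously adapted bases, so the monotonicity-plus-shift argument is simpler.
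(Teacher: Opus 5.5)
Your proof is correct, but it takes a different route from the paper's.

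The paper fixes, for each $k$, a basis of $R_k$ simultaneously adapted to $\cF_{S,t}$ and $\cF_{S,t'}$. It notes that the jumping value $\min_i\{\nu_i(s)+kt_i\}$ of each basis element moves by at most $k\norm[t-t']_\infty$ when $t$ is replaced by $t'$. This gives $d_\infty\le k\norm[t-t']_\infty$ at level $k$, and the paper concludes from the Boucksom--Eriksson comparison in the form \eqref{eq:d1dinfty}.

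Your first step, the two inclusions $\cF^\lambda_{S,t}R_k\subseteq\cF^{\lambda-kc}_{S,t'}R_k$ and $\cF^\lambda_{S,t'}R_k\subseteq\cF^{\lambda-kc}_{S,t}R_k$, is exactly the order-theoretic form of that same bound $d_\infty\le kc$. So the core estimate is shared; you differ in how it becomes a volume bound. Instead of citing Boucksom--Eriksson, you pass to the graded subalgebras $R(X,L)_{\ge\lambda}$, use monotonicity of their volumes, and shift the integration variable in \eqref{eq:bchen} with a common $e_-$. The strip $[e_--c,e_-]$ then contributes exactly $c$.

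What each route buys:
\begin{itemize}
\item Your argument needs neither simultaneously adapted bases nor the $d_\infty$ machinery. It also isolates two facts useful in their own right: monotonicity in $t$, and the translation rule $\vol(\cF_{S,t+c})=\vol(\cF_{S,t})+c$.
\item The price is reliance on the limit formula \eqref{eq:bchen}. The paper's route gives the Lipschitz bound already at each finite level $k$ and fits the general framework of \eqref{eq:d1dinfty}.
\end{itemize}

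Your inclusion in fact also gives the finite-level bound directly. It forces $\lambda_{k,i}(\cF_{S,t})\le\lambda_{k,i}(\cF_{S,t'})+kc$ for each ordered jumping number, so neither \eqref{eq:bchen} nor adapted bases are needed.

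The only thing to clean up is the abandoned inequality at the start of your ``alternative'' paragraph, which should simply be deleted.
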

\begin{proof}
	We claim that for each $k$, and any $t,t'\in \bbr^{|S|}$,
	$$\max_i |\lambda_{k,i}(\cF_{S,t},\cF_{S,t'})|\leq k\norm[t-t']_\infty,$$ where the $\lambda_{k,i}(\cF_{S,t},\cF_{S,t'})$ are the relative jumping numbers defined in Equation \eqref{eq:rel-jump}. Indeed, choosing a basis $\{s_i\}$ of $R_k$ compatible with both filtrations, 
	\begin{align*}
		\lambda_{k,i}(\cF_{S,t},\cF_{S,t'})&=\sup\left\{\lambda,\,s_i\in \cF_{S,t'}^\lambda R_k\right\}-\sup\left\{\lambda,\,s_i\in \cF_{S,t}^\lambda R_k\right\},\\
		&=\sup\{\lambda,\min_{i} \{\ord_{E_i}(s)-kt'_i\}\geq \lambda\}-\sup\{\lambda,\min_{i} \{\ord_{E_i}(s)-kt_i\}\geq \lambda\},
	\end{align*}
	which is clearly absolutely bounded by $k\max_i |t_i-t'_i|$. This having proven the claim, the result then follows from Equation \eqref{eq:d1dinfty}.
\end{proof}

We are now able to give equivalent characterisations of the Legendre transform defining the norm of a divisorial measure (see also \cite[Theorem 7.13]{bj:kstab1} in the ample case).
\begin{proposition}\label{prop:sup}
    Given $\mu=\{E_i,\xi_i\}$ a divisorial measure, denoting $S$ the set $(E_i)_i$, the following three expressions are equal:
    \begin{enumerate}[(i)]
    		\item the norm  $\norm[\mu]_L$ of $\mu$;
        \item the Legendre transform $\sup_{s\in\bbr^{|S|}}\{\vol(\cF_{S,s})-\langle \xi,s\rangle\}$;
        \item for all finite sets $S\subset T\subset X^\div$, the Legendre transform
        $$\sup_{t\in \bbr^{|T|}}\{\vol(\cF_{T,t})-\langle \xi,t\rangle\},$$
        
    \end{enumerate}
	where similarly to Theorem \ref{thm:variationalE} we write $\langle \xi,t\rangle:=\langle \xi,\pi(t)\rangle$ with $\pi:\bbr^{|T|}\to \bbr^{|S|}$ the projection associated with the inclusion $S\subset T$.    
\end{proposition}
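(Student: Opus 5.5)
The plan is to show (i)=(ii) by identifying the two integrands, and then (ii)=(iii) by comparing suprema over $\bbr^{|S|}$ and $\bbr^{|T|}$ in both directions.

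\emph{(i)=(ii).} By Definition \ref{def:norm-def}, $\norm[\mu]_L=\sup_t\{-\langle\xi,t\rangle+S_{L,\Sigma}(t)\}$. Theorem \ref{thm:descrfiltr}, applied with $e_-=\min_i t_i$, gives
$$\vol(\cF_{S,t})=t_0+\vol(L)\mi\int_{t_0}^\infty\vol\Big(L-\sum_{i}\max\{\lambda-t_i,0\}E_i\Big)\,d\lambda .$$
This is exactly $S_{L,\Sigma}(t)$ from Definition \ref{def:SL}, provided the trivial valuation (if it lies in $S$) is handled consistently. In both formulas the trivial valuation contributes no divisor term; in the filtration it only forces the volume to vanish for $\lambda>t_0$, since $\gamma(E_\triv)=0$. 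In Definition \ref{def:SL}, $F_\triv$ contributes through the convention that the integrand vanishes beyond $\min_i\{\gamma_i+t_i\}$, and since $\gamma_\triv=0$ this is $\lambda>t_0$ again. The first step is therefore to check that, under the paper's conventions, the two integrands agree for every $\lambda$, including the cutoff at $\lambda_{\max}$. Given this, the functions being maximised coincide pointwise, so the suprema are equal.

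\emph{(ii)$\le$(iii).} Take $s\in\bbr^{|S|}$. By Lemma \ref{lem:incl}, $s$ extends to some $t\in\bbr^{|T|}$ with $\pi(t)=s$ and $\cF_{T,t}=\cF_{S,s}$. Then $\vol(\cF_{T,t})-\langle\xi,t\rangle=\vol(\cF_{S,s})-\langle\xi,s\rangle$, because $\langle\xi,t\rangle$ only involves $\pi(t)$. Each candidate value in (ii) is thus attained in (iii), which gives the inequality.

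\emph{(iii)$\le$(ii).} Take $t\in\bbr^{|T|}$. Lemma \ref{lem:res} gives $\vol(\cF_{T,t})\le\vol(\cF_{S,\pi(t)})$, and therefore
$$\vol(\cF_{T,t})-\langle\xi,t\rangle\le \vol(\cF_{S,\pi(t)})-\langle\xi,\pi(t)\rangle\le \text{(ii)}.$$
One bookkeeping point needs attention: Lemma \ref{lem:res} is written with sign convention $-kt_i$, whereas \eqref{eq:divfiltr} uses $+kt_i$. The inclusion argument is insensitive to this choice, since dropping constraints enlarges the filtration in either convention, so the inequality holds either way. Combining the two inequalities gives (ii)=(iii).

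The only step I expect to need genuine care is (i)=(ii), specifically matching Definition \ref{def:SL} to the volume of the filtration when the trivial valuation is present. If the conventions turn out not to match on the nose, I would fall back on the translation invariance $t\mapsto t+c$, valid because $\sum\xi_i=1$, together with Remark \ref{rmk:filtr}. These allow normalising $\min t_i=0$ and discarding the indices with $t_i>\lambda_{\max}$. The aim would then be to show that the suprema agree, even if the two functions differ at individual $t$ that are not optimal.
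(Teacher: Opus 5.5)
Your proof is correct. For (i)$=$(ii) and (ii)$\le$(iii) it is the paper's argument, via Theorem~\ref{thm:descrfiltr} and Lemma~\ref{lem:incl} respectively.

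For (iii)$\le$(ii) you take a shorter route. You apply Lemma~\ref{lem:res} pointwise, which gives $\vol(\cF_{T,t})-\langle\xi,t\rangle\le\vol(\cF_{S,\pi(t)})-\langle\xi,\pi(t)\rangle\le$ (ii) for every $t\in\bbr^{|T|}$, and then take the supremum. The paper instead works with a maximising sequence for (iii):
\begin{itemize}
\item it normalises $\min_i t_i=0$;
\item it uses Corollary~\ref{coro:compactness} to confine the sequence to a compact set depending only on $T$, and extracts a limit $t_\infty$;
\item it combines Lemma~\ref{lem:res} with the continuity estimate of Proposition~\ref{prop:d1dinfty} to show that $\vol(\cF_{S,s_\infty})-\langle\xi,s_\infty\rangle\ge$ (iii), where $s_\infty=\pi(t_\infty)$.
\end{itemize}
For the equality of suprema this detour is unnecessary, and your version needs neither the compactness corollary nor the Lipschitz continuity in $t$. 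What the paper's argument buys in addition is that $s_\infty$ attains the supremum in (ii), so a maximiser exists. This is not part of the statement, but it is the kind of compactness the paper uses again in the proof of Theorem~\ref{thm:diff}.

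Your reading in (i)$=$(ii) is the right one: the integrand of $S_{L,\Sigma}$ is cut off at $t_\triv$ because $\gamma(E_\triv)=0$, and this is what the paper intends. Your fallback would not rescue the statement under any other reading. Suppose $S_{L,\Sigma}$ did not vanish for $\lambda>t_\triv$ and $\xi_\triv>0$. Sending $t_\triv\to-\infty$ would then leave $S_{L,\Sigma}(t)$ unchanged while $-\langle\xi,t\rangle\to+\infty$, so (i) would be infinite. By contrast, (ii) is bounded above by $\sum_j\xi_j\gamma(L,E_j)$, since $\vol(\cF_{S,t})\le\lambda_{\max}\le\sum_j\xi_j(\gamma(L,E_j)+t_j)$. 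So the pointwise identification of the two integrands is genuinely required, and it does hold under the paper's conventions.
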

\begin{proof}

 By Definition \ref{def:norm-def}, the norm of $\mu$ is given by $$\|\mu\|_L = \sup_{s\in \R^{|S|}}\{ S_{L}(s)-\langle \xi, s\rangle\},$$and so the equality of the first two expressions follows from Theorem \ref{thm:descrfiltr}.
    
Fix $T$ with $S\subset T$. For any $s\in \R^{|S|}$, Lemma \ref{lem:incl} produces a $t\in \R^{|T|}$ with $\pi(t)=s$ and with the filtrations  $\cF_{S,t}$ and $\cF_{T,t'}$ of $R(X,L)$ being equal. The supremum defining $(ii)$ is therefore smaller than the expression in $(iii)$, and to conclude we must prove the reverse inequality. 

Choose a maximising sequence $t_i$ for the expression $(iii)$. Both expressions are unchanged upon translation by a real number, hence we may assume  $\min_i t_i=0$. Furthermore, by Corollary \ref{coro:compactness}, we may also assume that the components of all $t_i$ are bounded above by a constant depending only on $T$. In particular, the $t_i$ converge subsequentially as $t\to\infty$ to some $t_{\infty} \in \R^{|T|}$ with $\pi(t_i)$ converging also in $\bbr^{|S|}$ to $\pi(t_{\infty}) =: s_{\infty}$. We further have:
    \begin{enumerate}[(i)]
        \item $\lim_{i\to\infty}\vol(\cF_{S,\pi(t_i)})=\vol(\cF_{S,s_{\infty}}),$ by Proposition \ref{prop:d1dinfty};
        \item for all $i$, $\vol(\cF_{S,\pi(t_i)})\geq \vol(\cF_{T,t_i})  $, by Lemma \ref{lem:res}.
    \end{enumerate} It follows that
    \begin{align*}
        \vol(\cF_{S,s_\infty})-\langle \xi,s_\infty\rangle&=\lim_{i\to\infty}(\vol(\cF_{S,\pi(t_i)})-\langle \xi,\pi(t_i)\rangle),\\
        &\geq \lim_{i\to\infty}(\vol(\cF_{T,t_i})-\langle \xi,\pi(t_i)\rangle),\\
        &=\sup_{t\in \bbr^{|T|}}\{\vol(\cF_{T,t})-\langle \xi,\pi(t)\rangle\},
    \end{align*}
   	concluding the proof, since $\langle \xi,\pi(t)\rangle = \langle \xi,t\rangle$ by definition (as the $E_i$ correspond to basis vectors in $\R^{|S|}\subset\R^{|T|}$).
\end{proof}

\subsection{Volume and energy.}

We require the following expression of the volume of the filtration associated with a test configuration in terms of the non-Archimedean Monge--Ampère energy. In the case where $L$ is ample, this may be found in \cite{bhj:duistermaat}.
\begin{theorem}\label{thm:bchen}
    Let $(\cX,\cL)$ be a test configuration, and let $\cF$ be its associated filtration of $R(X,L)$. Then
    $$\vol(\cF)=E^\na(\cX,\cL).$$
\end{theorem}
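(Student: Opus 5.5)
The plan is to prove the identity by computing $\vol(\cL)$ on $\cX$ directly from the filtration, via the $\bbc^*$-weight decomposition of sections, and then comparing with Equation \eqref{eq:bchen}. Both sides are unchanged under passing to a dominant resolution: the filtration is defined via $\cX_0$ and the Rees valuations, and volume is a birational invariant. So I may assume $\cX$ is dominant with $\cL = L + D$ and $D$ vertical. Twisting by $\scO_{\pr^1}(c)$ with $c \in \bbr$ shifts the filtration by $c$, i.e.\ $\cF^\lambda \mapsto \cF^{\lambda - c}$. By Equation \eqref{eq:bchen} this adds $c$ to $\vol(\cF)$, and by definition it adds $c$ to $E^\na$. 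Hence it suffices to treat the case where $D$ is effective, $\cL$ is big, and $k=0$ in the definition of $E^\na$, as in Remark \ref{rmk:big} and Proposition \ref{trusiani}. I will first take $D$ to have integer coefficients, and at the end remove this assumption by approximation.

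The key step is the weight decomposition. A section $\sigma \in H^0(\cX, m\cL)$ restricts over $\cX \setminus \cX_0 \cong X\times\bbc^*$ (with the point at infinity included) to a Laurent expansion $\sigma = \sum_j t^{-j} \tilde s_j$, where $s_j \in R_m$. Regularity over $\infty$ is automatic in the range $j \ge 0$, because $\cL$ coincides with the pullback of $L$ near $\infty$. Regularity across $\cX_0$ is exactly the condition $s_j \in \cF^{j} R_m$, using the description of the filtration in Example \ref{ex:tc}. Since the $\bbc^*$-action decomposes $H^0$ into weight spaces, this gives
\[
H^0(\cX, m\cL) \cong \bigoplus_{j \ge 0} \cF^{j} R_m, \qquad h^0(\cX, m\cL) = \sum_{j\ge 0} \dim \cF^{j} R_m .
\]
I expect the main obstacle to lie here: fixing the sign conventions and checking that the effectivity of $D$ forces exactly the range $j \ge 0$, with $\cF^j R_m = R_m$ for $j \le 0$ and $\cF^{j} R_m = 0$ for $j > m e_+$. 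If the range comes out shifted, I would absorb the shift into the twist normalisation above.

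Next I pass to the limit. Writing $j = m\lambda$, the sum becomes a Riemann sum:
\[
\frac{h^0(\cX, m\cL)}{m^{n+1}} = \int_0^{e_+} \frac{\dim \cF^{\lceil m\lambda\rceil} R_m}{m^n}\, d\lambda + o(1).
\]
The integrands are uniformly bounded by $h^0(mL)/m^n$ and vanish for $\lambda > e_+$. By \cite[Corollary 1.11]{bouchen}, $\dim \cF^{m\lambda}R_m/m^n \to \vol(R(X,L)_{\ge\lambda,\cF})/n!$ for almost every $\lambda$, so dominated convergence applies. Multiplying by $(n+1)!$ yields
\[
\vol(\cL) = (n+1)\int_0^\infty \vol(R(X,L)_{\ge\lambda,\cF})\,d\lambda .
\]
Dividing by $(n+1)\vol(L)$ and using Equation \eqref{eq:bchen} with $e_- = 0$ gives $E^\na(\cX,\cL) = \vol(\cF)$.

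Finally, I extend to $\bbr$-coefficients. For $D$ with rational coefficients, I apply the previous step to the multiple $rD$ made integral, replacing $\cL$ by $r\cL$ and $L$ by $rL$. Both sides are homogeneous: the filtration of $rL$ is the rescaled filtration $\cF^{r\lambda}R_{k}$. For $D$ with real coefficients, I approximate by rational $D'$. The left-hand side is then controlled by Proposition \ref{prop:d1dinfty}, since the filtration is $\cF_{S,t}$ with $t_i = b_i^{-1}\ord_{E_i}(D)$, which depends linearly on $D$. The right-hand side is controlled by the continuity of volume on the big cone.
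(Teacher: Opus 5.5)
Your proof is correct, but it takes a different route from the paper's.

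\textbf{How the paper argues.} The paper never computes $h^0(\cX,m\cL)$ directly. It passes to the Fujita approximations $\Bl_{\Bs(m\cL)}\cX$ and $\Bl_{\Bs(mL)}X$. It then shows that the induced filtrations agree in degree one and are included in higher degrees. From this it gets convergence of the volumes of the filtered subalgebras via the Okounkov-body Lemma \ref{lem:okounkov}. It then imports the semiample identity between $\vol(\cF_m)$ and $\cL_m^{n+1}$ from \cite[Proposition 5.9]{bhj:duistermaat}. Finally it lets $m\to\infty$ using $(\tilde\mu_m^*\cL-m^{-1}\cE_m)^{n+1}\to\langle\cL^{n+1}\rangle$.

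\textbf{What your route buys.} Your weight decomposition $h^0(\cX,m\cL)=\sum_{j\geq 0}\dim\cF^jR_m$ is an exact identity in every degree. It needs only normality of $\cX$ and, for $D$ effective, the range $j\geq 0$ forced by regularity at $\infty$. The rest of your argument uses only the existence of $\vol(R(X,L)_{\geq\lambda,\cF})$ and dominated convergence. You therefore avoid Fujita approximation, Lemma \ref{lem:okounkov} and the ample-case input altogether. As a byproduct you re-prove Proposition \ref{trusiani}(ii): twisting by $\scO_{\pr^1}(1)$ adds exactly $m\dim R_m$ to the sum. The paper's route keeps the argument aligned with the semiample approximations of Lemma \ref{prop:fujitatc}, which it reuses elsewhere. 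For this statement alone, yours is the more elementary and transparent proof.

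\textbf{Points to tidy.}
\begin{itemize}
\item Twisting by $\scO_{\pr^1}(c)$ sends $\cF^\lambda R_k$ to $\cF^{\lambda-kc}R_k$: the shift is $kc$ in degree $k$, not $c$.
\item The filtration attached to $rL$ is $\cF^\lambda R_{rk}$ in degree $k$, and its volume is $r\vol(\cF)$.
\item Replacing $\cF^{m\lambda}$ by $\cF^{\lceil m\lambda\rceil}$ in the Riemann sum needs a short argument. For $\lambda'>\lambda$ one has $\cF^{m\lambda'}R_m\subseteq\cF^{\lceil m\lambda\rceil}R_m\subseteq\cF^{m\lambda}R_m$ for $m$ large. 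One also needs continuity of $\lambda\mapsto\vol(R(X,L)_{\geq\lambda,\cF})$ away from $\lambda_{\max}$, which Theorem \ref{thm:descrfiltr} together with continuity of the volume of $\bbr$-divisors provides.
\end{itemize}
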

\begin{proof}
	We replace $\cL$ with $\cL+\pi^*\cO_{\bbp^1}(k)$ with $k$ sufficient large so that $\cL$ is big without loss of generality, noting that the desired equality is preserved under this modification.
	
	For $m>0$, let $\tilde \mu_m:\cX_m=\Bl_{\Bs(m\cL)}\cX\to \cX$, and $\mu_m:X_m=\Bl_{\Bs(mL)}X\to X$ be the blowups involved in Fujita approximation. Let $E_m$ be the exceptional divisor of $\mu_m$, and denote $\cE_m$ similarly. We write $L_m:=\mu_m^*(mL)-E_m$, $\cL_m:=\tilde\mu_m^*(m\cL)-\cE_m$, so that $(X_m,m\mi L_m)$ and $(\cX_m,m\mi\cL_m)$ are Fujita approximations of $(X,L)$ and $(\cX,\cL)$ respectively, and $(\X_m,\cL_m)$ is a test configuration for $(X_m,L_m)$ with $\L_m$ big and semiample, by the results of Lemma \ref{prop:fujitatc}.
	
	We now claim that,   denoting by $\cF_m$the filtration induced on $R(X_m,L_m)$ by the test configuration $\cL_m$   for each $m$, we have an identification
	\begin{equation}\label{eq:1}
		\cF^{\lambda}_m H^0(X_m,L_m)\simeq \cF^{\lambda} H^0(X,mL) 
	\end{equation}
	and for all $k>0$, an inclusion 
	\begin{equation}\label{eq:2}
		\cF^\lambda_m H^0(X_m,kL_m)\subseteq \cF^{\lambda} H^0(X,kmL)
	\end{equation}
	via a natural inclusion  $H^0(X_m,kL_m) \hookrightarrow H^0(X,kmL)$. For  Equation \eqref{eq:2}, we choose $k>0$. Let $\tilde \mfb_m$ be the ideal defining $\Bs(m\cL)$, and $\mfb_m$ the ideal defining $\Bs(mL)$. We have
	\begin{align*}H^0(X_m,kL_m)&\simeq^{\tau_k} H^0(X,(kmL)\otimes \mfb_m^k), \\ &\hookrightarrow^{\iota_k} H^0(X,(kmL)\otimes \mfb_{km}), \\ &= H^0(X,kmL),\end{align*}
	and likewise 	
	\begin{align*}H^0(\cX_m,k\cL_m)&\simeq^{\tilde \tau_k} H^0(\cX,(km\cL)\otimes \tilde\mfb_m^k), \\ &\hookrightarrow^{\tilde\iota_k} H^0(\cX,(km\cL)\otimes \tilde\mfb_{km}), \\ &= H^0(\cX,km\cL).\end{align*}
	Under these maps, we have $\tilde \tau_k(t^{-\lambda}\tilde s)=t^{-\lambda}\widetilde{\tau_k(s)}$ while the maps $\tilde \iota_k$ and $\iota_k$ also commute upon restriction to a fibre away from zero.

	In particular, given $s\in H^0(X_m,kL_m)$, we have $t^{-\lambda}\tilde s\in H^0(\cX_m,k\cL_m)$ if and only if $\tilde\tau_k(t^{-\lambda}\tilde s)=t^{-\lambda}\widetilde{\tau_k(s)}\in H^0(\cX,(km\cL)\otimes \tilde\mfb_m^k)$; equivalently, $\cF^\lambda_m H^0(X_m,kL_m)\subseteq \cF^{\lambda} H^0(X,kmL)$, proving Equation \eqref{eq:2} under the above identifications. Since $\iota_1$ and $\tilde \iota_1$ are isomorphisms, this further proves Equation \eqref{eq:1}.

	From Equation \eqref{eq:2} it follows that for each $m$, the subalgebra $R(X_m,L_m)_{\geq \lambda,\cF_m}$ is contained in the subalgebra $R(X,mL)_{\geq m\mi \lambda,\cF}$, and that those algebras coincide in degree one. We may define a rescaled filtration $m\cF_m$ by
	$$m\cF_m^\lambda R_k:=\cF^{m\lambda}_m R_k,$$
	so that we may rephrase the above as saying that the subalgebra $R(X_m,L_m)_{\geq \lambda,m\cF_m}$ is included in $R(X,L)_{\geq \lambda,\cF}$, again with equality in degree one. Hence from Lemma \ref{lem:okounkov} below, we have
	\begin{equation}
		\lim_{m\to\infty} m^{-n}\vol(R(X_m,L_m)_{\geq \lambda,m\cF_m})=\vol(R(X,L)_{\geq \lambda,\cF}).
	\end{equation}

	We write 
	\begin{align*}
		f_m(\lambda)&:=m^{-n}\vol(R(X_m,L_m)_{\geq \lambda,m\cF_m});\\
		f(\lambda)&:=\vol(R(X,L)_{\geq \lambda,\cF}),
	\end{align*}
	and note that they are decreasing functions such that for all $m$ and $\lambda$ we have $0\leq f(\lambda)\leq \vol(X,L)$ and
	\begin{align*}0\leq f_m(\lambda)&\leq m^{-n}\vol(X_m,L_m),\\
	&=m^{-n}\vol(X_m,\mu_m^*(mL)-E_m),\\
	&=\vol(X_m,\mu_m^*L-m\mi E_m)\leq \vol(X,L)
	\end{align*}
	by Fujita approximation. Hence it follows from dominated convergence that the (compactly supported) measures $\nu_m:=-\vol(X,L)\mi m^{-n}f_m(\lambda)$ converge in the sense of distributions to $\nu(\cF):=-\vol(X,L)\mi \frac{d}{d\lambda}f(\lambda)$, thus using Equation \eqref{eq:bchen2},
	\begin{equation}\label{eq:3}
	\int_\bbr \lambda\,d\nu(\cF_m)(\lambda)\to_{m\to\infty} \int_\bbr \lambda\,d\nu(\cF)(\lambda)=m\vol(\cF).
	\end{equation}
	We now unravel the left-hand side of Equation  \eqref{eq:3}. We have that
	\begin{align*}
		\int_\bbr \lambda\,d\nu(\cF_m)(\lambda)&=\frac{1}{\vol(X,L)m^n}\int_\bbr \lambda d(\frac{d}{d\lambda}\vol(R(X_m,L_m)_{\geq \lambda,\cF_m}))(\lambda)\\
		&=\frac{\vol(X_m,L_m)}{\vol(X,L)m^n}\vol(m\cF_m).
	\end{align*}
	On the other hand, it is clear that for each $k$, a basis $\{s_i\}$ of $H^0(X_m,kL_m)$ is adapted to $\cF_m$ if and only if it is adapted to $m\cF_m$. We then have by Equation  \eqref{eq:succmin} that
	\begin{align*}
		\lambda_{i,k}(m\cF_m)&=\sup\{\lambda\in\bbr,\,s_i\in m\cF_m^\lambda R_k\},\\
		&=\sup\{\lambda\in\bbr,\,s_i\in \cF_m^{m\lambda} R_k\},\\
		&=m\mi \sup\{\mu\in\bbr,\,s_i\in \cF_m^{\mu} R_k\}, \\ &=m\mi \lambda_{i,k}(\cF_m).
	\end{align*}
	It then follows from Equation  \eqref{eq:vol1} and Equation  \eqref{eq:vol2} that $\vol(m\cF_m)=m\mi \vol(\cF_m)$, hence we may rewrite Equation  \eqref{eq:3} as
	\begin{equation}\label{eq:4}
		\lim_{m\to\infty}\frac{\vol(X_m,L_m)}{\vol(X,L)m^{n+1}} \vol(\cF_m)=\vol(\cF).
	\end{equation}
	
	It is now known by \cite[Proposition 5.9]{bhj:duistermaat} that, since $\cL_m$ is a semiample test configuration,
	\begin{align*}
		(n+1)\mi\vol(\cF_m)&=\vol(X_m,L_m)\mi (\cL_m^{n+1}),\\
		&=\vol(X_m,L_m)\mi ((\tilde \mu_m^*(m\cL)-\cE_m)^{n+1}),\\
		&=\vol(X_m,L_m)\mi m^{n+1} ((\tilde \mu_m^* \cL-m\mi \cE_m)^{n+1}).
	\end{align*}
	By Fujita approximation we have $((\tilde \mu_m^* \cL-m\mi \cE_m)^{n+1})\to \langle \cL^{n+1}\rangle$, hence by the above and Equation  \eqref{eq:4},
	\begin{align*}
		\vol(\cF)&=\lim_{m\to\infty} ((n+1)\vol(L))\mi ((\tilde \mu_m^* \cL-m\mi \cE_m)^{n+1}), \\ &=\frac{\langle \cL^{n+1}\rangle}{(n+1)\vol(L)}, \\ &=E^\na(\cX,\cL).
	\end{align*}
	This concludes the proof.
\end{proof}

We have used the following lemma due to Boucksom--Jonsson in the proof of Theorem \ref{thm:bchen}. It is stated for $L$ ample \cite[Lemma 4.15]{bj:kstab0}, but the proof, relying on Okounkov bodies \cite{lazmus,kkh,bou:oko}, works without further change in the case where $L$ is big.
\begin{lemma}\label{lem:okounkov}
	Suppose $L$ is a big line bundle on $X$, and let $V_\bullet$ be a graded subalgebra of $R(X,L)$. Suppose for each $m$ sufficiently large and divisible  we are given a subalgebra $W^m_\bullet$ of $R(X,L)$ such that:
	\begin{enumerate}[(i)]
		\item $W^m_1=V_1$;
		\item for each $k>0$, $W^m_k\subseteq V_{mk}$.
	\end{enumerate}
	Then
	$$\lim_{m\to\infty}m^{-n}\vol(W^m_\bullet)=\vol(V_\bullet).$$
\end{lemma}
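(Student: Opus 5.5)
We compare both volumes with Okounkov bodies, following the argument of \cite[Lemma 4.15]{bj:kstab0}. Fix an admissible flag on $X$, or a valuation $\nu:\C(X)^*\to\mathbb{Z}^n$ of maximal rank. To the graded subalgebra $V_\bullet$ it attaches the semigroup $\Gamma(V)=\{(\nu(s),k): s\in V_k\setminus\{0\}\}$ and the Okounkov body $\Delta(V)$. For algebras containing an ample series, such as $R(X,L)$ and its subalgebras with nonzero volume, the results of \cite{lazmus,kkh,bou:oko} give $\vol(V_\bullet)=n!\,\vol_{\R^n}(\Delta(V))$, and the degree-$k$ slices of $\Gamma$ equidistribute. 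If $\vol(V_\bullet)=0$ the claim is immediate, since $0\leq m^{-n}\vol(W^m_\bullet)\leq\vol(V_\bullet)$ by the upper bound below. So we assume $\vol(V_\bullet)>0$.

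\textbf{Upper bound.} Condition (ii) gives an inclusion of graded algebras $W^m_\bullet\subseteq V^{(m)}_\bullet:=\bigoplus_k V_{mk}$. The $m$-th Veronese of $V_\bullet$ has volume $m^n\vol(V_\bullet)$, so
$$\limsup_m m^{-n}\vol(W^m_\bullet)\leq\vol(V_\bullet).$$

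\textbf{Lower bound.} This is the main step. Condition (i) gives $W^m_1=V_1$, so $W^m_\bullet$ contains the subalgebra generated by $V_m$ placed in degree one. Its semigroup therefore contains the additive semigroup generated by $\{(\nu(s),1): s\in V_m\}$, that is, by the finite set $\Gamma(V)_m\times\{1\}$. Rescaling by $1/m$, the body $m^{-1}\Delta(W^m)$ contains $\mathrm{conv}(m^{-1}\Gamma(V)_m)$. By the Khovanskii--Kaveh approximation theorem \cite{kkh}, applied to the semigroup $\Gamma(V)$, which we must check satisfies their nondegeneracy hypotheses, these convex hulls exhaust $\Delta(V)$ in Lebesgue measure as $m\to\infty$. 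The semigroup generated by degree-one elements may fail to have full group, and $\Delta$ of such an algebra only computes the volume up to the index of the lattice. We therefore first replace $\Gamma(V)$ by its intersection with a fixed finite-index sublattice, after passing to $m$ divisible enough, so that the index of the group generated by $\Gamma(V)_m$ is constant in $m$. The volume of the generated semigroup is then $n!$ times the volume of the convex hull, up to this constant; it is in fact $1$ once $m$ is large, by \cite{kkh}. This yields
$$\liminf_m m^{-n}\vol(W^m_\bullet)\geq\lim_m n!\,\vol\bigl(\mathrm{conv}(m^{-1}\Gamma(V)_m)\bigr)=\vol(V_\bullet).$$

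Bigness rather than ampleness enters only through the requirement that $\Gamma(V)$ generate $\mathbb{Z}^{n+1}$ and lie in a strongly convex cone. Both hold once $\vol(V_\bullet)>0$ and $V_\bullet\subseteq R(X,L)$. Lattice generation is the hard part of the argument; strong convexity follows from the containment of $\Delta(V)$ in the compact body $\Delta(L)$. To obtain lattice generation, we restrict to $m$ divisible by the index of the semigroup's group, which is harmless since the statement is for $m$ sufficiently divisible.
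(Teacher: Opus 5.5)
This is the argument the paper intends. The paper gives no proof beyond pointing to the Okounkov-body proof of \cite[Lemma 4.15]{bj:kstab0}, and that proof is your route: sandwich $W^m_\bullet$ between the algebra generated by $V_m$ in degree one and the Veronese $\bigoplus_k V_{mk}$, then use Kaveh--Khovanskii approximation for the lower bound. You also rightly read (i) as $W^m_1=V_m$ with $W^m_\bullet\subseteq R(X,mL)$, which is how it is used in Theorem~\ref{thm:bchen}.

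One step, however, is false as written. The condition $\vol(V_\bullet)>0$ does not imply that $V_\bullet$ contains an ample series. Nor does it imply that $\Gamma(V)$ generates $\mathbb{Z}^{n+1}$, or that $\vol(V_\bullet)=n!\,\vol(\Delta(V))$. Restricting to divisible $m$ also does not make the index of the group generated by $\Gamma(V)_m-\Gamma(V)_m$ equal to $1$. Take $V_\bullet=\mathbb{C}[x^2,y^2,z^2]\subset R(\mathbb{P}^2,\mathcal{O}(1))$ with the monomial flag valuation. Then $\Gamma(V)_{2j}=\{(2b,2c): b+c\leq j\}$, so for every even $m$ these differences generate $2\mathbb{Z}^2$, of index $4$. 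Meanwhile $\vol(V_\bullet)=1/4$ but $2!\,\vol(\Delta(V))=1$.

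No new idea is needed, only correct bookkeeping: the index cancels rather than disappearing. Set up notation as follows.
\begin{itemize}
\item Let $G$ be the group generated by $\Gamma=\Gamma(V)$; it has rank $n+1$ since $\vol(V_\bullet)>0$.
\item Let $m_0\mathbb{Z}$ be the image of $G$ under the degree map.
\item Let $\iota$ be the index of $G_0=G\cap(\mathbb{Z}^n\times\{0\})$ in $\mathbb{Z}^n$.
\end{itemize}
Kaveh--Khovanskii give $\vol(V_\bullet)=n!\,\vol(\Delta(V))/\iota$, with the limit taken along $m_0\mathbb{N}$.

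Next, if $(g,0)=P-N$ with $P,N\in\Gamma_d$, adding any element of $\Gamma_{m-d}$ to both shows $g\in\Gamma_m-\Gamma_m$. Hence for all large $m\in m_0\mathbb{N}$ the set $\Gamma_m-\Gamma_m$ generates $G_0$. Khovanskii's theorem on the $k$-fold sumsets $k\Gamma_m$ then gives $\dim W^m_k\geq\#(k\Gamma_m)=\iota^{-1}\vol(\mathrm{conv}\,\Gamma_m)\,k^n+O(k^{n-1})$, with the same $\iota$, so it cancels.

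Finally, $m^{-n}\vol(\mathrm{conv}\,\Gamma_m)\to\vol(\Delta(V))$. This holds because $\Gamma_m$ lies in a single coset of $G_0$ inside $\mathrm{conv}\,\Gamma_m\subseteq m\Delta(V)$, and $\#\Gamma_m\sim m^n\vol(\Delta(V))/\iota$.

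In the paper's application, $V_\bullet=R(X,L)_{\geq\lambda,\mathcal{F}}$ contains an ample series for $\lambda<\lambda_{\max}$ \cite{bouchen}. So $\Gamma(V)$ does generate $\mathbb{Z}^{n+1}$ there, and your argument goes through verbatim in that case.
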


\subsection{A valuative interpretation of the Monge--Amp\`ere energy} We next give interpretations of the Monge--Amp\`ere energy and its derivative in terms of the associated divisorial valuations, and vice versa, which we expect to be useful in applications.  

To a finite collection $\{E_i,t_i\}$ of divisorial valuations with $t_i \in \R$, we may associate the invariant $$S_L(\{E_i,t_i\})=t_0 + \frac{1}{\vol(L)}\int^{\infty}_{t_0}\vol\left(X,L-\sum_{i=0}^l\max(\lambda-t_i,0)F_i\right)\,d\lambda.$$
We may also find a dominant degeneration $\X$ of $\X$ compatible with the $\{E_i\}$, whose components with Rees valuations corresponding with the $\{E_i\}$ we also denote by $\{E_i\}$, and with the other components denoted by $\{F_j\}$. Hence we may define a line bundle $\L_t = L + \sum_i t_i b_i E_i + c\sum_j F_j$ on $\X$, where $c$ is a sufficiently large  positive constant (which we can choose to depend only on the data of $L$, the $E_i$, and a choice of normalisation for $\min_i t_i$). We will usually omit this $c$ in the notation, as all quantities of interest remain unchanged if we increase $c$ further. This implies that uniform K-stability can be verified either through test configurations, or through arbitrary collections $\{E_i,t_i\}$. We next explain the form of the Monge--Amp\`ere energy, and its derivative, directly through the $\{E_i,t_i\}$.

\begin{lemma}\label{lem:MAS}
The Monge--Amp\`ere energy $E\NA(\X,\L_t)$ satisfies $$E\NA(\X,\L_t) = S_{L}(\{E_i,t_i\})$$
for any $c>c_0$ where $c_0$ can be taken to depend only on $L$, the $E_i$, and the normalisation $\min_i t_i$.
\end{lemma}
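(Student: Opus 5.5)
The plan is to chain Theorem \ref{thm:bchen} with Theorem \ref{thm:descrfiltr}. By Theorem \ref{thm:bchen}, $E\NA(\X,\L_t)=\vol(\cF)$, where $\cF$ is the filtration of $R(X,L)$ induced by $(\X,\L_t)$. It therefore suffices to identify $\cF$ with the divisorial filtration $\cF_{\{E_i\},t}$ attached to $\{E_i,t_i\}$. Once that is done, Theorem \ref{thm:descrfiltr} gives $\vol(\cF_{\{E_i\},t})=S_L(\{E_i,t_i\})$ directly. Here we use the convention that the trivial valuation, if present, contributes the shift $t_0$ and no divisor term, and we can take $e_-=t_0=\min_i t_i$.

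To identify the filtrations, use the concrete description in Example \ref{ex:tc}. Writing $\X_0=\sum_i b_iE_i+\sum_j b'_jF_j$ and $D=\sum_i t_ib_iE_i+c\sum_jF_j$, that description gives
$$\cF^\lambda R_k=\Big\{s:\ \min\Big(\min_i\{b_i\mi\ord_{E_i}(s)+kt_i\},\ \min_j\{b_j'^{-1}\ord_{F_j}(s)+kc/b'_j\}\Big)\ge\lambda\Big\}.$$
Here $b_i\mi\ord_{E_i}$ is exactly the Rees valuation identified with $E_i$, so the $E_i$-part of the constraint is precisely the divisorial filtration $\cF_{\{E_i\},t}$. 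If the trivial valuation is among the $E_i$, it corresponds to the strict transform of $X\times\{0\}$, with $b=1$ and coefficient $t_0$. Otherwise I would include that component among the $F_j$. This is harmless, because with a large coefficient it imposes no constraint.

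The main step, and the only real obstacle, is to show that the $F_j$-constraints are vacuous once $c$ is large. I would argue as in Remark \ref{rmk:filtr} and Corollary \ref{coro:compactness}. By Theorem \ref{thm:descrfiltr}, $\cF_{\{E_i\},t}^{k\lambda}R_k=\{0\}$ for $\lambda>\lambda_{\max}=\min_i(\gamma(L,E_i)+t_i)$. This bound depends only on $L$, the $E_i$, and the normalisation of $\min_i t_i$. Since $\ord_{F_j}(s)\ge0$, the $F_j$-constraint at level $k\lambda$ holds automatically whenever $c/b'_j\ge\lambda$. So choose $c_0$ with $c_0\ge b'_j(\lambda_{\max}+1)$ for all $j$, and bound $\lambda_{\max}$ by $\max_i\gamma(L,E_i)+\min_i t_i$.

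With $c>c_0$, every nonzero section lies in the $F_j$-admissible range at every level where the $E_i$-filtration is nonzero. At the remaining levels both filtrations are zero, so $\cF=\cF_{\{E_i\},t}$. One subtlety remains: the $b'_j$ depend on the chosen degeneration $\X$, not only on $L$ and the $E_i$. I would address this by noting that $\X$ is fixed once chosen, and that the expression is stable under further increasing $c$. This is consistent with the remark before the lemma, which treats $c$ as omitted from the notation.

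Finally, Theorem \ref{thm:bchen} needs $\L_t$ to be a test configuration whose invariants are computed after twisting to bigness. Both sides of the identity shift by $k$ under $\L_t\mapsto\L_t+\scO_{\pr^1}(k)$, which corresponds to $t\mapsto t+k$. We may therefore twist freely, and the identity follows.
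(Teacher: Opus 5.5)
This is essentially the paper's proof: you translate to bigness, identify the filtration of $(\X,\L_t)$ via Example~\ref{ex:tc} and Theorem~\ref{thm:bchen}, and use $\lambda_{\max}\leq\max_i\gamma(L,E_i)+\min_i t_i$ from Theorem~\ref{thm:descrfiltr} to make the $F_j$-constraints vacuous. Your observation that the threshold must be scaled by the multiplicities $b'_j$ is correct, and it fixes a slip in the paper's bound $c\geq\min_i t_i+\max_j\gamma(L,E_j)$, which ignores the $b'_j$; the claimed independence of $c_0$ from $\X$ holds literally only if one writes $c\sum_j b'_jF_j$ instead of $c\sum_j F_j$.

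One thing needs correcting: your convention for $E_{\triv}$ when it lies among the $E_i$. It does not contribute ``no divisor term''. Its constraint forces $\cF^{k\lambda}R_k=0$ for $\lambda>t_{\triv}$, so $S_L$ must be read with the integrand cut off beyond $t_{\triv}$. For example, when $t_{\triv}=\min_i t_i$ the filtration is the shifted trivial one, of volume $t_{\triv}$. You inherited this imprecision from the last formula of Theorem~\ref{thm:descrfiltr}.
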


\begin{proof}Since $$E\NA(\X,\L_t+\scO_{\pr^1}(c)) = E\NA(\X,\L_t) + c$$ and $$S_{L}(\{E_i,t_i+c\}) = S_{L}(\{E_i,t_i\}) + c,$$  we may assume $\L_t$ is big, in which case this follows from Example \ref{ex:tc} and Theorem \ref{thm:bchen}, noting that if $c$ is sufficiently large, for example if $c \geq \min_i t_i + \max_j \gamma(L,E_j)$ (hence $c \geq \min_i (t_i+\gamma(L,E_i))$ which corresponds to the bound in Theorem \ref{thm:descrfiltr}), then the Rees valuations associated with the $F_j$ will not contribute to the  constraints defining the associated filtration by Theorem \ref{thm:descrfiltr}.
\end{proof}

We further give a divisorial interpretation of $\nabla_HS_L(E_i,t_i),$ extending \cite{bj:kstab2} which treated the case of a single divisorial valuation, using the same idea.

\begin{proposition}\label{prop:diffSL} For fixed $\{E_i,t_i\}$, the function $L \mapsto S_L(\{E_i,t_i\})$ is differentiable, with derivative given by \begin{align*}&\nabla_H S_L(\{E_i,t_i\}) =  \\ &\frac{n}{\langle L\rangle^n}\int_{t_0}^{\infty}  \left\langle L - \sum_i \max\{\lambda-t_i, 0\}E_i\right\rangle^{n-1}\cdot \left(H - \frac{\langle L^{n-1}\rangle\cdot H}{\langle L^n\rangle}\left(L -  \sum_i \max\{\lambda-t_i, 0\}E_i \right)\right) d\lambda.\end{align*}

\end{proposition}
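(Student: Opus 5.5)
We differentiate under the integral sign in the definition
$$S_L(\{E_i,t_i\})=t_0+\frac{1}{\vol(L)}\int_{t_0}^\infty \vol(L_\lambda)\,d\lambda,\qquad L_\lambda:=L-\sum_i\max\{\lambda-t_i,0\}E_i,$$
after fixing the normalisation $t_0=\min_i t_i$. This choice is independent of $L$, so the lower limit does not move as $L$ varies. Replacing $L$ by $L+sH$ for $|s|$ small, the integrand becomes $\vol(L_\lambda+sH)$. We apply the quotient rule to $\vol(L+sH)^{-1}\int \vol(L_\lambda+sH)\,d\lambda$. The derivative of the prefactor is $-n\langle L^{n-1}\rangle\cdot H/\langle L^n\rangle^2$, by the $C^1$ property of the volume on the big cone (\cite[Theorem A]{bfj:teissier}). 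Together with $\vol(L_\lambda)=\langle L_\lambda^{n-1}\rangle\cdot L_\lambda$, this accounts for the second term in the claimed formula. The first term comes from differentiating the numerator pointwise, which gives $n\langle L_\lambda^{n-1}\rangle\cdot H$ wherever $L_\lambda$ is big. Combining the two terms yields exactly the stated integrand.

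The main point is to justify exchanging derivative and integral. We will argue as follows.

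First, we bound the support of the integrand. For $s$ in a small interval $[-s_0,s_0]$, the integrand vanishes once $\lambda > t_0+\max_i\gamma(L+s_0H,E_i)$. The thresholds $\gamma(L+sH,E_i)$ are bounded uniformly in $s$, since $L+sH\le L+s_0A$ for an ample $A$ with $A\pm H$ ample. So the integration is over a fixed compact interval $[t_0,\Lambda]$.

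Second, for fixed $\lambda$, the function $s\mapsto\vol(L_\lambda+sH)$ is differentiable wherever $L_\lambda+sH$ is big. The set of $\lambda$ at which $L_\lambda$ lies on the boundary of the pseudoeffective cone is a single point $\lambda^*(s)$ in $[t_0,\Lambda]$, because $\lambda\mapsto L_\lambda$ is decreasing in the pseudoeffective order. That point has measure zero. Beyond it, the integrand vanishes identically.

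Third, we need a uniform Lipschitz bound. Take an ample $A$ with $A\pm H$ nef. Monotonicity and homogeneity of volume give
$$|\vol(M+sH)-\vol(M)|\le \vol(M+|s|A)-\vol(M-|s|A)\le C|s|,$$
with $C$ uniform for $M$ ranging over the compact family $\{L_\lambda\}$. Here we use that the volume is continuous on all of $N^1(X)$, hence locally Lipschitz on bounded sets. This follows from the bound $\vol(M+\epsilon A)\le\vol(M)+C\epsilon$, which in turn follows from the formula $\frac{d}{d\epsilon}\vol=n\langle\cdot\rangle^{n-1}\cdot A$ and bounded positive products.

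Dominated convergence applied to the difference quotients then gives differentiability of the integral, with derivative $\int n\langle L_\lambda^{n-1}\rangle\cdot H\,d\lambda$. This uses the convention that positive products vanish outside the big cone, and continuity of $\langle\cdot\rangle^{n-1}\cdot H$ on the big cone, so the derivative is continuous in $L$.

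The main obstacle is this Lipschitz and domination step near the pseudoeffective boundary, where the volume is not differentiable. We handle it via the two-sided ample sandwich together with the measure-zero boundary point.
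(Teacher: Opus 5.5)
Your argument is correct, but it is organised differently from the paper's. The paper proceeds in four steps:
\begin{enumerate}[(i)]
\item it orders the $t_i$ and reduces to $H$ ample by linearity;
\item it splits $\int_{t_0}^\infty$ into the intervals $[t_i,t_{i+1}]$ on which $\lambda\mapsto L_\lambda$ is affine, and differentiates under the integral where $L_\lambda$ stays big;
\item it handles the one interval on which $L_\lambda$ leaves the big cone by a change of variables and an appeal to \cite[Theorem 2.18]{bj:kstab2}, and notes that later intervals contribute nothing;
\item it finishes with the quotient rule.
\end{enumerate}
You instead run a single dominated-convergence argument on the fixed range $[t_0,\Lambda]$. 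The domination comes from the local Lipschitz property of the volume, via the sandwich $M-|s|A\le M+sH\le M+|s|A$. This makes the behaviour at the pseudoeffective boundary self-contained rather than imported from Boucksom--Jonsson's one-parameter result. It needs neither an ordering of the $t_i$ nor a reduction to ample $H$. The same kind of domination also gives continuity of the derivative in $L$, hence genuine differentiability. The paper's route is shorter mainly because it outsources the boundary piece.

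A few justifications should be tightened.
\begin{itemize}
\item Monotonicity of $\lambda\mapsto L_\lambda$ alone does not show that $L_\lambda$ meets the pseudoeffective boundary only once. What does is convexity of $\lambda\mapsto\max\{\lambda-t_i,0\}$ together with bigness of $L$. For $t_0\le\lambda_1<\lambda_2$ and $\theta=(\lambda_1-t_0)/(\lambda_2-t_0)$, one gets $L_{\lambda_1}\ge(1-\theta)L+\theta L_{\lambda_2}$, so $L_{\lambda_2}$ pseudoeffective forces $L_{\lambda_1}$ big.
\item The same inequality shows that for $\lambda>\lambda^*$ the class $L_\lambda$ lies outside the closed pseudoeffective cone. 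That is what you need for the integrand to vanish for all small $|s|$, and not only at $s=0$.
\item Continuity of the volume does not by itself imply local Lipschitz continuity. The work is done by your derivative bound along the ample direction, or by the Lipschitz estimate for the volume in \cite{book:laz1}.
\item Since the $E_i$ live on a birational model, the argument should be run there with $\pi^*L$, $\pi^*H$, $\pi^*A$. The latter is only nef, which your sandwich tolerates.
\item In the support bound you mean $\gamma(L+s_0A,E_i)$ rather than $\gamma(L+s_0H,E_i)$.
\end{itemize}
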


\begin{proof}
First note that each term in the desired equality is linear in $H$, so we may without loss of generality, having decomposed $H=H_1-H_2$, assume that $H$ is ample. We now split the integral defining $  S_L(\{E_i,t_i\})$ into pieces of the form 
\begin{align*} 
   S_L(\{E_i,t_i\}) &=  t_0 + \sum_{i=0}^l\frac{1}{  \langle L^n\rangle}\int_{t_i}^{t_{i+1}} \left\langle L - \sum_{j=0}^i (\lambda-t_j)E_j\right\rangle^n d\lambda\\
    &=t_0 + \sum_{i=0}^l\frac{1}{  \langle L^n\rangle}\int_{t_i}^{t_{i+1}}\left\langle L - \lambda\left(\sum_{j=0}^i E_j\right)  +\sum_{j=0}^i t_jE_j\right\rangle^nd\lambda,
\end{align*}
where we declare $t_l$ to be $\infty$, and we may assume that the $t_i$ are strictly increasing.

Examining each integral separately, there are three possible cases. If $ L - \sum_{j=0}^i (t_i-t_j)E_j$ is big, we may simply differentiate under the integral by differentiability of volume on the big cone to find
\begin{equation}\label{desired-equality}\frac{d}{d\varepsilon}\bigg|_{\varepsilon=0}\int_{t_i}^{t_{i+1}}\left\langle L+\varepsilon H - \lambda\left(\sum_{j=0}^i E_j\right)  +\sum_{j=0}^i t_jE_j\right\rangle^nd\lambda=n\int_{t_i}^{t_{i+1}}\left\langle L - \lambda\left(\sum_{j=0}^i E_j\right)\right\rangle^{n-1}\cdot H\,d\lambda.\end{equation}

For  the first $i$ such that $ L - \sum_{j=0}^i (t_i-t_j)E_j$ is not big, the volume function is not integrable throughout the corresponding interval. Nevertheless, changing variables $\mu=\lambda-t_i$, we find ourselves in the situation of \cite[Theorem 2.18]{bj:kstab2}, which implies the analogue of Equation \eqref{desired-equality}.     Finally, for any further $j$, the associated integral on $[j,j+1]$ will be zero.     The result in the proposition then follows from the quotient rule. \end{proof}

 In particular  $$\nabla_L S_L(\{E_i,t_i\}) = \frac{n}{\langle L\rangle^n}\int_{t_0}^{\infty}  \left\langle L - \sum_i \max\{\lambda-t_i, 0\}E_i\right\rangle^{n-1}\cdot\left(\sum_i \max\{\lambda-t_i, 0\}E_i\right) d\lambda,$$ which implies the norm is nonnegative.

\subsection{Directional differentiability of the Legendre transform of $S_L$.}

We consider a divisorial measure $\mu = \{E_i,\xi_i\}$ and a $\mu$-compatible dominant degeneration $\X$ of $X$ with  $\cX_0=\sum_i b_i E_i$. We denote by $S\subset X^\div$ the support of $\mu$. We are now able to relate the divisorial expression of $\|\mu\|_L$ with quantities associated to test configurations.

\begin{proposition}\label{prop:normE}
The norm $\|\mu\|_L$ of $\mu$ satisfies
	$$\|\mu\|_L=\sup_{t\in \bbr^\ell}\left\{ E^\na(\cX,\cL_t) -\langle \xi,t\rangle\right\}$$
	where $\cL_t:=\pi^*L+\sum_i t_i b_i E_i$.
\end{proposition}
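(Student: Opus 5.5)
The plan is to identify both sides with Legendre transforms of volumes of divisorial filtrations, and then appeal to Proposition \ref{prop:sup}. Here $\ell=|I|$ indexes all components of $\cX_0=\sum_i b_iE_i$. Let $T\subset X^\div$ be the set of Rees valuations of $\cX$, so $S\subseteq T$ by $\mu$-compatibility.

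\textbf{Step 1: each $\cL_t$ induces $\cF_{T,t}$.} Fix $t\in\bbr^\ell$. By Example \ref{ex:tc}, the filtration induced by $(\cX,\cL_t)$ is
$$\cF^\lambda R_k=\{s\in R_k,\ \min_i\{b_i\mi\ord_{E_i}(s)+k\,b_i\mi\ord_{E_i}(D_t)\}\geq\lambda\}.$$
Since $\ord_{E_i}(D_t)=t_ib_i$, the second term is $kt_i$. With the Rees valuations $b_i\mi\ord_{E_i}$, this is exactly $\cF_{T,t}$. Theorem \ref{thm:bchen} then gives $E^\na(\cX,\cL_t)=\vol(\cF_{T,t})$. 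Before applying it we twist by $\scO_{\pr^1}(k)$ so that $\cL_t$ is big. Both sides shift by $k$ (the filtration shifts by $k$ and $t\mapsto t+k$), so the equality survives.

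\textbf{Step 2: compare the pairings.} The pairing $\langle\xi,t\rangle$ only involves the coordinates indexed by $S$, that is $\langle \xi,\pi(t)\rangle$ for the projection $\pi:\bbr^{|T|}\to\bbr^{|S|}$, as in Theorem \ref{thm:variationalE}. Therefore
$$\sup_{t\in\bbr^\ell}\{E^\na(\cX,\cL_t)-\langle\xi,t\rangle\}=\sup_{t\in\bbr^{|T|}}\{\vol(\cF_{T,t})-\langle\xi,\pi(t)\rangle\}.$$
By Proposition \ref{prop:sup}(iii), the right-hand side equals $\|\mu\|_L$, which concludes the argument.

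\textbf{Main obstacle.} The delicate point is the bookkeeping in Step 1. First, the canonical component of $\cX_0$ that realises the trivial valuation must be handled; this is consistent with the convention $E_0=E_\triv$ and $\gamma=0$ in Theorem \ref{thm:descrfiltr}. Second, distinct components of $\cX_0$ might induce the same Rees valuation. If so, $T$ should be regarded as a multiset, and I would check that duplicated valuations only contribute the minimum of their $t_i$. In that case the filtration equals $\cF_{T',t'}$ for the underlying set $T'$ with $t'$ the minima. Since $\xi$ is supported on one representative, the supremum is unchanged: lowering the other duplicated coordinates does not increase the value. A secondary check is that Theorem \ref{thm:bchen} applies to arbitrary (possibly non-big before twisting) $\cL_t$, which the $\scO_{\pr^1}(k)$-twist handles.
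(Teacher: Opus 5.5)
Your proposal is correct and follows the paper's own argument. You identify $E^\na(\cX,\cL_t)$ with $\vol(\cF_{\Rees(\cX),t})$ via Example \ref{ex:tc} and Theorem \ref{thm:bchen}, then apply Proposition \ref{prop:sup}(iii) with $T=\Rees(\cX)$. Your concern about repeated Rees valuations is harmless but unnecessary: distinct components of $\cX_0$ give distinct $\bbc^*$-invariant valuations on $\bbc(X)(t)$, and hence, by the Gauss extension, distinct valuations on $X$.
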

\begin{proof}
	By Theorem \ref{thm:bchen} we have $E^\na(\cX,\cL_t)=\vol(\cF_{\cL_t})$ where $\cF_{\cL_t}$ is the filtration associated with the test configuration $(\cX,\cL_t)$ in the sense of Example \ref{ex:tc}, which are then divisorial filtrations supported on $\Rees(\cX)$. By Theorem \ref{thm:descrfiltr} and Proposition \ref{prop:sup}, the supremum on the right-hand side therefore equals the supremum defining the left-hand side, concluding the proof.
\end{proof}

We will now use a theorem that is standard in optimisation theory, but may not be well-known to readers in the field, known as Danskin's theorem \cite{danskin}. We give a proof of the exact version of the statement we will use in this article for the convenience of the reader, which very closely follows the proof of \cite[Theorem 1.29]{book:guler} (see also \cite[Theorem 10.31]{book:rockafellarvariational} for a more general result).
\begin{theorem}[Danskin's theorem]\label{thm:danskin}
Let $f:U\times K\to\bbr$ be a function with $U$ open in $\bbr^m$ and $K$ a compact space, such that:
\begin{enumerate}[(i)]
    \item $f$ is continuous;
    \item for all $(x,y)\in U\times K$, and all $h\in \bbr^m$, the directional derivative $\nabla_h f(x,y)=\lim_{t\to 0} t\mi (f(x+th,y)-f(x,y))$ exists;
    \item for all $h\in\bbr^m$, $(x,y)\mapsto \nabla_h f(x,y)$ is continuous.
\end{enumerate}
Then $g(x):=\sup_{y\in K} f(x,y)$ is directionally differentiable in all directions, with
$$\nabla_h^+ g(x)=\sup_{y\in \mathrm{argmax} f(x,\cdot)} \nabla_h f(x,y).$$
\end{theorem}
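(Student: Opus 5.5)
We prove the two inequalities separately, in the style of the standard argument (e.g.\ \cite[Theorem 1.29]{book:guler}). Fix $x\in U$ and $h\in\bbr^m$, and write $M(x):=\mathrm{argmax} f(x,\cdot)$. Since $f(x,\cdot)$ is continuous on the compact $K$, $M(x)$ is nonempty and compact. Because $\nabla_h f(x,\cdot)$ is continuous in $y$ by (iii), the supremum over $M(x)$ is attained. Moreover $g$ is finite and continuous on $U$, since $f$ is uniformly continuous on $\overline{B}\times K$ for a small closed ball $\overline{B}\ni x$.

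\emph{Lower bound.} For any $y\in M(x)$ and $t>0$ small, $g(x+th)-g(x)\geq f(x+th,y)-f(x,y)$. Dividing by $t$ and letting $t\to0^+$ gives
$$\liminf_{t\to 0^+}\frac{g(x+th)-g(x)}{t}\geq \nabla_h f(x,y).$$
Taking the supremum over $y\in M(x)$ gives the lower bound.

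\emph{Upper bound.} This is the main step. Take $t_k\to0^+$ realising the $\limsup$ of the difference quotients, and choose $y_k\in M(x+t_kh)$. By compactness we may pass to a subsequence with $y_k\to \bar y$. First, $\bar y\in M(x)$: for every $y\in K$ we have $f(x+t_kh,y_k)\geq f(x+t_kh,y)$, and joint continuity (i) lets us pass to the limit. Next,
$$g(x+t_kh)-g(x)\leq f(x+t_kh,y_k)-f(x,y_k).$$
To control the right-hand side, apply the mean value theorem to the one-variable function $s\mapsto f(x+sh,y_k)$ on $[0,t_k]$. This function is continuous and, by (ii), differentiable, with derivative $\nabla_h f(x+sh,y_k)$; to see this, note that the two-sided directional derivative at $x+sh$ in direction $h$ is exactly the derivative in $s$. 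Hence the right-hand side equals $t_k\nabla_h f(x+s_kh,y_k)$ for some $s_k\in(0,t_k)$. By the joint continuity (iii), $\nabla_h f(x+s_kh,y_k)\to\nabla_h f(x,\bar y)$, so
$$\limsup_{t\to0^+}\frac{g(x+th)-g(x)}{t}\leq \nabla_h f(x,\bar y)\leq \sup_{y\in M(x)}\nabla_h f(x,y).$$

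Combining the two bounds shows that the one-sided limit $\nabla_h^+g(x)$ exists and equals $\sup_{y\in M(x)}\nabla_h f(x,y)$. The main obstacle is the upper bound, specifically the need for uniformity in $y$. This is provided by applying the mean value theorem along the segment and then invoking the joint continuity of $\nabla_h f$ in $(x,y)$, rather than only its pointwise existence; the compactness step forcing $\bar y\in M(x)$ is the other essential ingredient.
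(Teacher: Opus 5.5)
Your proof is correct and follows the same argument as the paper (itself adapted from G\"uler): the lower bound comes from fixing a maximiser at $x$, and the upper bound from choosing maximisers $y_k$ at the perturbed points, extracting a limit that is a maximiser at $x$, and then applying the mean value theorem together with joint continuity of $\nabla_h f$. Your version is slightly tighter in two small ways: you get the lower bound directly from (ii) without the mean value theorem, and by taking an arbitrary sequence $t_k\to 0^+$ that realises the $\limsup$ you prove the full one-sided limit, whereas the paper only works along $t=1/k$.
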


Here $ \mathrm{argmax} f(x,\cdot)$ denotes the set of $y$ with $f(x,y) = \max_{z\in K}f(x,z)$, and $\nabla^+$ denotes the usual (right) directional derivative.
\begin{proof}
    Let $h\in \bbr^m$, and fix $x\in U$. Let $y\in \mathrm{argmax} f(x,\cdot)$, and for all $k>0$, let $y_k\in \mathrm{argmax} f(x+k\mi h,\cdot)$, so that $f(x,y)=g(x)$, $f(x+k\mi h,y_k)=g(x+k\mi h)$. Then 
    \begin{align*}
        k\mi(g(x+k\mi h)-g(x))&=k\mi(f(x+k\mi h,y_k)-f(x,y))\\
        &\geq k\mi(f(x+k\mi h,y)-f(x,y)).
    \end{align*}
    By the mean value theorem applied to $t\mapsto f(x+th,y)$, which is differentiable by assumption, this in fact equals $\nabla_h f(x+t_k h,y)$ for some $0< t_k <k\mi$. Taking the limit in $k$ using that directional derivatives are continuous, and since the above holds for an arbitrary maximiser $y$, we find
    $$\liminf_{k\to\infty} k\mi(g(x+k\mi h)-g(x))\geq \sup_{y\in \mathrm{argmax} f(x,\cdot)} \nabla_h f(x,y).$$
    Now, by compactness, up to taking a subsequence we may assume that $y_k$ converges to some $y'\in K$. We first claim that $g(x)=f(x,y')$, i.e.\ $y'\in \mathrm{argmax} f(x,\cdot)$ as well. Indeed, let $y\in \mathrm{argmax}f(x,\cdot)$, then 
    $$f(x,y')=\lim_{k\to\infty}f(x+k\mi h,y_k)\geq \lim_{k\to\infty} f(x+k\mi h,y)=f(x,y)$$
    but $f(x,y)\geq f(x,y')$ by definition of $y$. Thus,
    \begin{align*}
        k\mi(g(x+k\mi h)-g(x))&=k\mi(f(x+k\mi h,y_k)-f(x,y'))\\
        &\leq k\mi(f(x+k\mi h,y_k)-f(x,y_k))\\
        &=\nabla_h f(x+s_k h,y_k)
    \end{align*}
    for some $0<s_k<k\mi$ by the mean value theorem again. Hence by continuity of directional derivatives we again have that
    \begin{align*}
        \limsup_{k\to\infty} k\mi(g(x+k\mi h)-g(x))&\leq \nabla_h f(x,y')\leq \max_{y\in \mathrm{argmax}f(x,\cdot)} \nabla_h f(x,y),
    \end{align*}
    concluding the proof.
\end{proof}

We apply this result to understand the derivative of the norm.

\begin{theorem}\label{thm:diff}
	The function $L\mapsto \|\mu\|_L$ is directionally differentiable in all directions at all points $L\in Big(X)$; furthermore, for any $\cX$ which is $\mu$-compatible, then for any $\R$-Cartier divisor $D$ on $X$,
	$$\langle L^n\rangle \nabla^+_{D} \|\mu\|_L=\sup_{\MA(\cX,\cL)=\mu}\left\{\langle\cL^n\rangle\cdot \pi^*D-n(\langle L^{n-1}\rangle \cdot D)E^\na(\cX,\cL)\right\}.$$
\end{theorem}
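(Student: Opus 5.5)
The plan is to apply Danskin's theorem (Theorem \ref{thm:danskin}) to the expression of the norm from Proposition \ref{prop:normE}. Fix a $\mu$-compatible dominant degeneration $\cX$, and write $\cL_{L',t}:=\pi^*L'+\sum_i t_ib_iE_i$ for $L'$ in a small open neighbourhood $U$ of $L$ in the big cone of $N^1(X)_\bbr$. Set
$$f(L',t):=E^\na(\cX,\cL_{L',t})-\langle \xi,t\rangle,$$
so that $\|\mu\|_{L'}=\sup_t f(L',t)$. Both terms are invariant under $t\mapsto t+c$, so we normalise $\min_i t_i=0$.

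To apply Danskin we need a compact parameter space $K$ that works uniformly in $L'\in U$. By Lemma \ref{lem:MAS} and Theorem \ref{thm:descrfiltr}, $E^\na(\cX,\cL_{L',t})=S_{L'}(\{E_i,t_i\})=\vol(\cF_{S,t})$ for the filtration of $R(X,L')$. Corollary \ref{coro:compactness}, which says components exceeding $\lambda_{\max}+1$ do not contribute, lets us replace $t$ by a truncated $t'$. The bound $t'_i\le \max_i\gamma(L',E_i)+1$ is locally uniform in $L'$, since $\gamma$ is continuous (and bounded on a shrunken $U$). Truncating a coordinate $t_j$ with $j$ not contributing leaves the filtration unchanged. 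It also can only decrease $\langle\xi,t\rangle$, because $\xi_j\ge 0$. So $f$ does not decrease, and the supremum over $\bbr^{|I|}$ equals the maximum over the compact box $K=\{0\le t_i\le C\}$.

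Next we check Danskin's hypotheses on $U\times K$. For continuity of $f$, volume is continuous on $N^1$, and the integrand in $S_{L'}$ is jointly continuous and dominated on a compact $\lambda$-range. For the directional derivatives, Proposition \ref{prop:diffSL} gives differentiability of $L'\mapsto S_{L'}(\{E_i,t_i\})$ with an explicit formula. The continuity of $(L',t)\mapsto\nabla_DS_{L'}$ in (iii) should follow from that integral formula, using continuity of $\langle\cdot\rangle^{n-1}\cdot D$ on the big cone and dominated convergence. The delicate part is near the boundary of the big cone, where the integrand is merely bounded; there one uses that the set where $L'-\sum\max(\lambda-t_i,0)E_i$ is not big has $\lambda$-measure varying continuously.

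Alternatively, one can express the derivative on the test configuration side. By Theorem \ref{thm:bchen} and differentiability of volume on $\cX$,
$$\nabla_D E^\na(\cX,\cL_{L',t})=\frac{\langle\cL^n\rangle\cdot\pi^*D}{\langle L'^n\rangle}-\frac{n\langle L'^{n-1}\rangle\cdot D}{\langle L'^n\rangle}E^\na(\cX,\cL),$$
using $\frac{d}{ds}\langle\cL+s\pi^*D\rangle^{n+1}=(n+1)\langle\cL^n\rangle\cdot\pi^*D$ and the quotient rule applied to $\vol(L')$. This is continuous in $(L',t)$ by $C^1$-ness of volume on the big cone of $\cX$, after twisting by $\scO_{\pr^1}(k)$ with $k$ uniform on $U\times K$, which is possible by Remark \ref{rmk:big}. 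This route avoids the boundary issue, and I would use it.

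Danskin then gives $\nabla^+_D\|\mu\|_L=\sup_{t\in\mathrm{argmax} f(L,\cdot)}\nabla_Df(L,t)$. By Theorem \ref{thm:variationalE}, the maximisers are exactly the $t$ with $\MA(\cX,\cL_t)=\mu$. Multiplying through by $\langle L^n\rangle$ gives the stated formula. The solution set is nonempty by Theorem \ref{thm:main}, and the supremum in the statement is over such $\cL$ on $\cX$, which matches. The main obstacle I expect is the uniform compactness reduction, namely making the truncation to $K$ valid simultaneously for all $L'\in U$. A secondary subtlety is ensuring bigness of $\cL_t$ (after a uniform twist) throughout $U\times K$, so that the $C^1$ property of volume applies.
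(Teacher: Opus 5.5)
Your proposal is correct and follows the paper's route. You apply Danskin's theorem to $f(L',t)=E^\na(\cX,\cL_{L',t})-\langle\xi,t\rangle$ via Proposition \ref{prop:normE}, with a compact box valid on a whole neighbourhood of $L$ obtained from Corollary \ref{coro:compactness} and local boundedness of $L'\mapsto\max_i\gamma(L',E_i)$ (the paper gets this from concavity of $\gamma$), compute the derivative on the test-configuration side by $C^1$-regularity of volume on the big cones of $X$ and $\cX$, and identify the maximisers with solutions of $\MA(\cX,\cL)=\mu$ via Theorem \ref{thm:variationalE}. Your observation that truncating a non-contributing coordinate only decreases $\langle\xi,t\rangle$ (as $\xi_j\geq 0$), and your uniform twist by $\scO_{\pr^1}(k)$ in place of the paper's normalisation $\min_i t_i=c$ with $c$ large, make explicit details the paper leaves implicit.
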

\begin{proof}
We take $U = \Big(X)$ the big cone and $K$ a compact subset of $\R^{|S|}$, and define $$f(L,t):=-\langle \xi,t\rangle + E^\na\left(\cX,L+\sum_i t_i b_i E_i\right).$$ By Proposition \ref{prop:normE} we have $\|\mu\|_L=\sup_{t\in \bbr^{|S|}} f(L,t)$. By Theorem \ref{thm:danskin}, assuming we can show that:
	\begin{enumerate}[(i)]
		\item there exists an open neighbourhood $U\subseteq \Big(X)$ of $L$ and $K\subset \bbr^{|S|}$ compact such that for all $L'\in U$,
		$$\sup_{t\in \bbr^{|I|}}f(L',t)=\sup_{t\in K}f(L',t);$$
		\item $f$ is continuous on $U\times K$;
		\item for all $D,L,t$, the directional derivative $\nabla_D f(L,t)$ exists;
		\item for all $D$, $\nabla_D f(L,t)$ is continuous in $(L,t)$ on $U\times K$,
	\end{enumerate}
	then $\norm[\mu]_L$ is directionally differentiable at $L$, and further
	\begin{equation}
		\nabla_D \|\mu\|_L=\sup_{t\in \mathrm{argmax}(f(L,\cdot))} \nabla_D f(L,t).
	\end{equation}
	By Theorem \ref{thm:variationalE} $(i)$, the values $t\in\R^{|S|}$ with $t\in\mathrm{argmax}(f(L,\cdot))$ correspond to solutions  the Calabi--Yau theorem with respect to the divisorial measure $\mu$,  so this will prove the theorem provided we are able to prove that $(i)$-$(iv)$ hold.
	
	Choose an open neighbourhood $U\subset \Big(X)$ of $L$ of radius $\varepsilon>0$. We first note that, since $f(L,\cdot)$ is invariant under the translation action of $(\bbr,+)$ on $\bbr^{|S|}$ as a function of $t$, we can normalise to $\min_i t_i=c$ for $c$ sufficiently large  so that for each such $t$, for all  $L'\in U$ the line bundle $L'+\sum_i t_i b_i E_i$ is big. Now, by Theorem \ref{thm:descrfiltr} as in Corollary \ref{coro:compactness} (and Example \ref{ex:tc}), for a fixed $L'\in U$, $f(L',\cdot)$ is maximised in the set of vectors $t$ such that, for each component $t_i$,
$$c\leq t_i \leq c+\max_{i\in I} \gamma(L',E_i)+1.$$
Note furthermore that each $L\mapsto \gamma(L,E_i)$ is concave, hence locally Lipschitz, on $\Big(X)$; in particular, the maximum of finitely many such functions is also locally Lipschitz. Let $C$ thus be a local Lipschitz constant for $L\mapsto \max_{i\in I} \gamma(L,E_i)$ on $U$, so that for an Euclidean norm on $\Big(X)$ and any $L'\in U$,
$$|\max_{i\in I} \gamma(L,E_i) - \max_{i\in I} \gamma(L',E_i)|\leq C\norm[L-L']\leq C\varepsilon.$$
In particular, for all $L'\in U$, $f(L',\cdot)$ is also maximised in the compact set
\begin{equation}\label{danskin-app}c\leq t_i \leq c+C\varepsilon+\max_{i\in I} \gamma(L,E_i)+1\end{equation}
which we denote by $K$. This choice of $(U,K)$ gives (i). The continuity property $(ii)$ is then an immediate consequence of continuity of the volume on the big cone.
	
	Likewise, it follows from differentiating $E^\na$ in the direction of $D$ that $(iii)$ holds, with
	$$\nabla_D f(L,t)=\langle L^n\rangle\mi\left\langle(\pi^*L+\sum_i t_i b_i E_i)^n\right\rangle\cdot \pi^*D-\frac{n\langle L^{n-1}\rangle \cdot D}{\langle L^n\rangle}E^\na\left(\cX,\pi^*L+\sum_i t_i b_i E_i\right).$$
	It is then clear that $(iv)$ holds, since positive intersection products are continuous on the big cones of $X$ and $\cX$ \cite[Proposition 2.9]{bfj:teissier}. This concludes the proof.

\end{proof}

\begin{remark}\label{rem:nablamin}What we will use from this result is that 
\begin{align*}\nabla^-_{K_X}\|\mu\|_L&=-\nabla^+_{-K_X}\|\mu\|_L,\\
	&=\inf_{\MA(\cX,\cL)=\mu}\frac{\langle\cL^n\rangle\cdot \pi^*K_X}{\langle L^n\rangle}-\left(\frac{n\langle L^{n-1}\rangle\cdot K_X}{\langle L^n\rangle} \right)E^\na(\cX,\cL).
\end{align*}
\end{remark}

\subsection{Divisorial stability and K-stability}\label{subsect:mainsect}

We are now equipped to relate divisorial stability and uniform K-stability.

\begin{lemma}\label{lem:entropy}
Let $(\cX,\cL)$ be a test configuration. Then
	$$\vol(L)\mi\sum_i b_i A_X(E_i) \langle \L\rangle^n\cdot E_i  =H^\na(\cX,\cL).$$
\end{lemma}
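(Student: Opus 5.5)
We will reduce to a computation on a single good model and then expand $K^{\log}_{\cX/X\times\bbp^1}$ into vertical components. The first step uses Lemma \ref{lem:can-assume-snc} and its proof. Both sides are unchanged under passing to a semistable reduction or resolution $q:\cY\to\cX$, which is a dominant model. For the entropy this is exactly what that proof shows. For the left-hand side we use Equations \eqref{tess-1} and \eqref{tess-2}: components of $\cY_0$ that are $q$-exceptional have $\langle q^*\cL\rangle^n\cdot F=0$, so they contribute nothing. Strict transforms of the $E_i$ keep both their Rees valuations and their restricted volumes. Under a base change of degree $d$, the multiplicities $b_i$, the restricted volumes and $\vol(L)$ rescale compatibly, and $A_X$ is computed on the Rees valuation $b_i^{-1}\ord_{E_i}$. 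We also twist $\cL$ by $\scO_{\bbp^1}(k)$ so that $\cL$ is big. This changes neither side, by Lemma \ref{asymptotic-MA} and because $K^{\log}$ is unaffected. So we may assume $\cX$ is smooth and dominant, with $\rho:\cX\to X\times\bbp^1$.

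The second step is to write the relative log canonical divisor. We have $K^{\log}_{\cX/X\times\bbp^1}=K_{\cX}-\rho^*K_{X\times\bbp^1}+\cX_0-\cX_{0,\red}$. This is a vertical divisor supported on $\cX_0$, so it equals $\sum_i c_iE_i$ for some coefficients $c_i$. The key identity to establish is $c_i=b_iA_X(b_i^{-1}\ord_{E_i})$, which is $b_i$ times the log discrepancy of the Rees valuation. This is the standard computation of \cite[Proposition 4.11]{bhj:duistermaat} (see also \cite[Section 3]{lixu}). It uses the fact that for the Gauss extension $\sigma(v)$ of a valuation $v$ on $X$, one has $A_{X\times\bbp^1}(\sigma(v)) = A_X(v)+v(t)$, where the extra term is $1$ after normalising by $b_i$. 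Concretely, $\ord_{E_i}(K_{\cX}-\rho^*K_{X\times\bbp^1})+1=A_{X\times\bbp^1}(\ord_{E_i})=b_iA_X(b_i^{-1}\ord_{E_i})+b_i$. Adding $\ord_{E_i}(\cX_0-\cX_{0,\red})=b_i-1$ and subtracting the $1$ coming from the definition of $A$ gives $c_i=b_iA_X(E_i)$. For the component given by the strict transform of $X\times\{0\}$, which carries the trivial valuation, we get $b=1$ and $A=0$, which is consistent.

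The third step is linearity. The positive intersection product $\langle\cL^n\rangle$ is a class that pairs linearly with divisors on $\cX$, since it defines an element of $N^n(\mathfrak X)$ acting on Cartier classes. Hence
$$\langle\cL^n\rangle\cdot K^{\log}_{\cX/X\times\bbp^1}=\sum_i b_iA_X(E_i)\langle\cL\rangle^n\cdot E_i.$$
Dividing by $\vol(L)$ gives $H^\na(\cX,\cL)$ by definition.

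The main obstacle is the discrepancy computation in the second step. This includes checking that the convention $A_X(E_i)$ in the statement refers to the Rees valuation $b_i^{-1}\ord_{E_i}$ and not to $\ord_{E_i}$; the normalisation by $b_i$ is exactly what makes the factor $b_i$ appear on the left-hand side. We would verify the identity for a single smooth component by working locally: near a general point of $E_i$, $\rho$ is a composition of blowups, and we compare $t=$ (local equation)$^{b_i}$.
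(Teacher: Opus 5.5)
Your route is the paper's. Its proof simply cites Odaka \cite[Theorem 4.1]{odaka-ross-thomas} and \cite[Corollary 4.12]{bhj:duistermaat} for the identity $K^{\log}_{\cX/X\times\bbp^1}=\sum_i b_iA_X(v_i)E_i$, with $v_i=b_i^{-1}\ord_{E_i}|_{\bbc(X)}$ the Rees valuation, and then uses linearity of $D\mapsto\langle\cL^n\rangle\cdot D$. These are exactly your second and third steps.

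However, the one computation you actually write out in the second step does not give what you claim. From $A_{X\times\bbp^1}(\ord_{E_i})=b_i(A_X(v_i)+1)$ you correctly get $\ord_{E_i}(K_{\cX}-\rho^*K_{X\times\bbp^1})=b_iA_X(v_i)+b_i-1$. Adding $\ord_{E_i}(\cX_0-\cX_{0,\red})=b_i-1$ then gives $b_iA_X(v_i)+2(b_i-1)$. There is no further $1$ to subtract: the $-1$ in the definition of the log discrepancy was already used to pass from $A_{X\times\bbp^1}(\ord_{E_i})$ to the order of $K_{\cX/X\times\bbp^1}$. So with the sign exactly as displayed in the paper's definition of $K^{\log}_{\cX}$, the identity $c_i=b_iA_X(v_i)$, and with it the lemma, fails as soon as some $b_i>1$.

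What makes the identity true is the convention of \cite{bhj:duistermaat}. There $K^{\log}_{\cX}=K_{\cX}+\cX_{0,\red}$ and $K^{\log}_{X\times\bbp^1}=K_{X\times\bbp^1}+X\times\{0\}$, so $K^{\log}_{\cX/X\times\bbp^1}=K_{\cX/X\times\bbp^1}-(\cX_0-\cX_{0,\red})$. This gives $c_i=(b_iA_X(v_i)+b_i-1)-(b_i-1)=b_iA_X(v_i)$. This is the convention the paper's citation implicitly relies on, and the plus sign in its displayed definition should be read as a minus. The two conventions agree when $\cX_0$ is reduced, which is why the discrepancy is invisible after Lemma \ref{lem:can-assume-snc}. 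You should state this convention explicitly rather than force the arithmetic to come out.

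A smaller point about your first step: under a degree-$d$ base change neither side is unchanged. Both are multiplied by $d$, since the components over $E_i$ have Rees valuation $d\,v_i$. This is harmless for proving an equality, but the base change is unnecessary anyway. A resolution $q$ alone suffices: it preserves the left-hand side by \eqref{tess-1} and \eqref{tess-2}, and on the smooth model the corrected discrepancy formula plus linearity is the whole proof.
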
 
\begin{proof}

The claim follows from a calculation of $K_{\X/X\times\pr^1}$ in terms of the discrepancies $A_X(E_i)$, which is due to Odaka \cite[Theorem 4.1]{odaka-ross-thomas} (see also \cite[Corollary 4.12]{bhj:duistermaat}).
\end{proof}

\begin{remark}
If one views $\MA(\X,\L)$ as a measure on $X^\div$, this can be phrased as the statement that the integral of the log discrepancy function against  $\MA(\X,\L)$ agrees with $H^\na(\cX,\cL)$.
\end{remark}

We will rely on the following result to relate the norms, which proves genuine differentiability of $\|\mu\|_L$ in the direction of $L$ itself. Let $\mu = \{E_i,t_i\}$ be a divisorial measure, with $S = \{E_i\}_i \subset X^{\div}$. 

\begin{proposition}\label{prop:derivativeUL}
We have 
 $$\nabla_L^- \|\mu\|_L=\nabla^+_L \|\mu\|_L=\|\mu\|_L.$$ 
\end{proposition}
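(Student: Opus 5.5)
The plan is to show that $c\mapsto \|\mu\|_{cL}$ is exactly linear for $c>0$, namely $\|\mu\|_{cL}=c\|\mu\|_L$. Then $s\mapsto \|\mu\|_{L+sL}=(1+s)\|\mu\|_L$ for $s>-1$, and $L+sL$ stays big in this range. Both one-sided derivatives at $s=0$ are then equal to $\|\mu\|_L$, which is the claim. This avoids Theorem \ref{thm:diff} and Danskin's theorem entirely: the norm is homogeneous in $L$, so its derivative along the ray through $L$ is elementary.

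The first step is a scaling identity for the expected vanishing order of Definition \ref{def:SL}. Fix $c>0$ and $t\in\R^{|I|}$, and write $t_0=\min_i t_i$. Homogeneity of volume gives
\begin{equation*}
\vol\Big(cL-\sum_i\max(\lambda-t_i,0)F_i\Big)=c^n\,\vol\Big(L-\sum_i\max(\lambda/c-t_i/c,0)F_i\Big).
\end{equation*}
This holds for $\R$-divisors, and also outside the big cone, where both sides vanish. Dividing by $\vol(cL)=c^n\vol(L)$ and substituting $\lambda=c\lambda'$, the integral over $[t_0,\infty)$ becomes $c$ times the integral over $[t_0/c,\infty)$. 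Since $t_0/c=\min_i(t_i/c)$, this yields
\begin{equation*}
S_{cL}(t)=t_0+c\big(S_L(t/c)-t_0/c\big)=c\,S_L(t/c).
\end{equation*}

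The second step is to pass this through the Legendre transform of Definition \ref{def:norm-def}. We have
\begin{equation*}
\|\mu\|_{cL}=\sup_t\{-\langle\xi,t\rangle+c\,S_L(t/c)\}=c\sup_{t'}\{-\langle\xi,t'\rangle+S_L(t')\}=c\|\mu\|_L,
\end{equation*}
using the bijective reparametrisation $t=ct'$ of $\R^{|I|}$ together with linearity of $\langle\xi,\cdot\rangle$. Finally, $L+sL=(1+s)L$ for $s\in(-1,\infty)$, and $(1+s)L$ is big there. Hence $\|\mu\|_{L+sL}-\|\mu\|_L=s\|\mu\|_L$, and both difference quotients in the definitions of $\nabla^\pm_L$ are constant and equal to $\|\mu\|_L$.

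There is no genuine obstacle here. The only point needing care is the bookkeeping of the base point $t_0$ in $S_L$ under rescaling: one must check that $\min_i(t_i/c)=t_0/c$, which holds because $c>0$. One should also note that the well-definedness remark following Definition \ref{def:SL} allows the lower integration limit to be shifted freely, so the rescaled integral really is $S_L(t/c)$.
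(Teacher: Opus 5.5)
Your proposal is correct and follows essentially the same route as the paper: homogeneity of volume plus the change of variables $\lambda\mapsto(1+s)\lambda$ give $S_{(1+s)L}((1+s)t)=(1+s)S_L(t)$, hence $\|\mu\|_{(1+s)L}=(1+s)\|\mu\|_L$, and both one-sided derivatives follow at once. The only cosmetic difference is that the paper first normalises $\min_i t_i=0$, whereas you keep general $t$ and track the base point $t_0$ explicitly.
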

\begin{proof}
Recall $$ \|\mu\|_{L} = \sup_{t \in \R^{|S|}} (-\langle \xi,t\rangle + S_{L}(t)),$$ and recall we may assume $\min_i t_i=0$ in this supremum. It follows that  we may write
$$S_{L}(t) = \frac{1}{\vol(L)}\int_0^{\infty}\vol\left(L - \sum_{i=0}^l \max(\lambda - t_i,0)E_i\right)d\lambda.$$ 
For $s\in \bbr$ sufficiently small, homogeneity of the volume implies
$$S_{(1+s)L}(t) =  \frac{1}{\vol(L)}\int_0^{\infty}\vol\left(L - \sum_{i=0}^l \max\left(\frac{\lambda - t_i}{1+s},0\right)E_i\right)d\lambda,$$ so that  by a change of variables we have $$S_{(1+s)L}((1+s)t)=(1+s)S_{L}(t).$$

The norm of $\mu$ with respect to $(1+s)L$ similarly satisfies, where we may assume $\min_i t_i=0$
\begin{align*}
\|\mu\|_{(1+s)L} &= \sup_{(1+s)t \in \R^{|S|}} (-\langle \xi,(1+s)t\rangle + S_{(1+s)L}((1+s)t)),\\
 &= (1+s)\sup_{(1+s)t \in \R^{|S|}} \left(-\langle \xi,t\rangle + S_{L}(t)\right).
\end{align*} It follows that the norm also satisfies the homogeneity property
$$\|\mu\|_{(1+s)L} = (1+s)\|\mu\|_L,$$ which implies the result.\end{proof}

\begin{corollary}\label{coro:norm}
	If $\mu=\MA(\X,\L)$, then $\norm[\mu]_L=\norm[(\X,\L)]_L$.
\end{corollary}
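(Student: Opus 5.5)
The plan is to compare both sides through their expressions as directional derivatives in the direction of $L$. By definition, $\|(\X,\L)\| = \nabla_L E\NA(\X,\L)$. By Proposition \ref{prop:derivativeUL}, $\|\mu\|_L = \nabla^+_L\|\mu\|_L$. Theorem \ref{thm:diff} computes this derivative as a supremum over solutions of the Monge--Amp\`ere equation $\MA(\X',\L')=\mu$ on a fixed $\mu$-compatible degeneration. The strategy is therefore to show that every term in that supremum equals $\nabla_L E\NA$ of the corresponding test configuration, and that all these values coincide.

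First, I would reduce to a convenient degeneration. After passing to a common dominant resolution, I may assume $\X$ is dominant, and it is then automatically $\mu$-compatible, since $\supp(\mu)$ consists of components of $\X_0$. The quantities $E\NA$, $\MA$ and $\nabla_L E\NA$ are unchanged under this pullback, by Equations \eqref{tess-1} and \eqref{tess-2} and birational invariance of positive products. Next, I apply Theorem \ref{thm:diff} with $D=L$. Using $\langle L^{n-1}\rangle\cdot L=\langle L^n\rangle$, this gives
$$\nabla^+_L\|\mu\|_L=\sup_{\MA(\X,\L')=\mu}\left\{\frac{\langle \L'^n\rangle\cdot \pi^*L}{\langle L^n\rangle}-nE\NA(\X,\L')\right\}.$$
A direct differentiation of $E\NA(\X,\L'+\scO_{\pr^1}(k)+sL)$ at $s=0$, using differentiability of volume on the big cone and $\vol((1+s)L)=(1+s)^n\vol(L)$, shows the bracket is exactly $\nabla_L E\NA(\X,\L')=\|(\X,\L')\|$.

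It then remains to show that $\|(\X,\L')\|$ takes the same value for every solution $\L'$, in particular for $\L'=\L$. I would use the homogeneity trick from the proof of Proposition \ref{prop:derivativeUL}. Write $\L'=L+D_t$. Then $\MA(\X,(1+s)L+(1+s)D_t)=\mu$ as well, since $\MA$ is invariant under scaling. By homogeneity of volume, $E\NA(\X,(1+s)(L+D_t))=(1+s)E\NA(\X,L+D_t)$. Consequently
$$\nabla_L E\NA(\X,L+D_t)=E\NA(\X,L+D_t)-\frac{d}{ds}\Big|_{s=0}E\NA(\X,L+(1+s)D_t)=E\NA(\X,L+D_t)-\langle \xi,t\rangle,$$
where the last derivative is computed as in the proof of Theorem \ref{thm:variationalE}, using $\MA(\X,L+D_t)=\mu$. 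Theorem \ref{thm:variationalE} says this value is the common maximal value of the functional $s\mapsto E\NA(\X,L+D_s)-\langle\xi,s\rangle$, so it is independent of the chosen solution. Proposition \ref{prop:normE} identifies that maximal value with $\|\mu\|_L$, which gives the result directly and makes the use of Theorem \ref{thm:diff} essentially a consistency check.

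The main obstacle is keeping the normalisation of the derivative in the direction $D_t$ straight. The vertical divisor must include the $b_i$, and twists by $\scO_{\pr^1}(k)$ must be handled: such twists shift both $E\NA$ and $\langle\xi,t\rangle$ by $k$, since $\sum_i\xi_i=1$. One also has to check that $\L$ can be taken big so that differentiability of volume applies, which Remark \ref{rmk:big} provides.
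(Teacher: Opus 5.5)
Your argument is correct, but it takes a different route from the paper.

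The paper never computes $\nabla_L E^{\na}$ of a solution directly. It combines two inputs:
\begin{itemize}
\item the homogeneity $\|\mu\|_{(1+s)L}=(1+s)\|\mu\|_L$ from Proposition \ref{prop:derivativeUL}, which gives two-sided differentiability with derivative $\|\mu\|_L$;
\item Theorem \ref{thm:diff} (Danskin) together with Remark \ref{rem:nablamin}, applied in the directions $L$ and $-L$.
\end{itemize}
Together these squeeze both the supremum and the infimum of $\nabla_L E^{\na}(\X,\L')$ over all solutions of $\MA(\X,\L')=\mu$ to the value $\|\mu\|_L$.

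You instead move the homogeneity down to the test configuration. Joint $1$-homogeneity of $(a,b)\mapsto E^{\na}(\X,aL+bD_t)$, plus the $C^1$ property of volume, gives the Euler identity
\[
\nabla_L E^{\na}(\X,L+D_t)=E^{\na}(\X,L+D_t)-\langle \xi,t\rangle,\qquad \xi=\MA(\X,L+D_t).
\]
Here the vertical derivative equals $\langle\xi,t\rangle$ because components of $\X_0$ outside $\supp(\mu)$ carry zero mass. Theorem \ref{thm:variationalE} and Proposition \ref{prop:normE} then identify the right-hand side with $\|\mu\|_L$.

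What your route buys:
\begin{itemize}
\item It is more elementary. It needs neither Danskin's theorem, nor the compact set $K$, nor Proposition \ref{prop:derivativeUL}, so your Step 2 can simply be dropped.
\item It yields a pointwise formula $\|(\X,\L)\|=E^{\na}(\X,\L)-\langle \MA(\X,\L),t\rangle$ valid for every dominant test configuration.
\item It explains directly why all solutions have the same norm: they all attain the same maximal value of the functional in Theorem \ref{thm:variationalE}.
\end{itemize}

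If you keep Step 2, note that the bracket $\langle\L'^n\rangle\cdot\pi^*L/\langle L^n\rangle-nE^{\na}$ is twist-independent only under the normalisation implicit in Theorem \ref{thm:diff}: $\L'$ big and $\L'-L$ effective. With a twist by $\scO_{\pr^1}(k)$ the correct bracket uses $E^{\na}+k$.

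What the paper's route buys is economy. Theorem \ref{thm:diff} is needed anyway to express $\nabla^-_{K_X}\|\mu\|_L$ in Theorem \ref{thm:divequalsK}, so once it is available the corollary costs one line.

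Your attention to including the $b_i$ in $D_t$ and to twists by $\scO_{\pr^1}(k)$ is exactly what is needed. The paper's own proof of Theorem \ref{thm:variationalE} is loose on this point.
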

\begin{proof}
	By Proposition \ref{prop:derivativeUL}, the left and right directional derivatives in the direction of $L$ of $\norm[\mu]_L$ coincide, thus by Theorem \ref{thm:diff} and Remark \ref{rem:nablamin} applied to $L$ and $-L$ we have that 
	$$\nabla_L \|\mu\|_L=\nabla_L E^\na(\cX,\cL)$$
	for any $(\cX,\cL)$ satisfying $\MA(\cX,\cL)=\mu$. Since $\norm[(\cX,\cL)]_L=\nabla_L E^\na(\cX,\cL)$ and $\norm[\mu]_L=\|\mu\|_L$, the result then follows from Proposition \ref{prop:derivativeUL}.
\end{proof}

The following is the main result of the section.

\begin{theorem}\label{thm:divequalsK}
$(X,L)$ is uniformly K-stable if and only if it is divisorially stable.
\end{theorem}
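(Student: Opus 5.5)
The plan is to compare the two stability thresholds through the Monge--Amp\`ere measure map $(\cX,\cL)\mapsto \MA(\cX,\cL)$ and the Calabi--Yau theorem (Theorem \ref{thm:main}). First I will establish, for any test configuration $(\cX,\cL)$ with $\mu:=\MA(\cX,\cL)$, the two relations
$$\|(\cX,\cL)\| = \|\mu\|_L,\qquad M^\na(\cX,\cL)\geq \beta_L(\mu).$$
Equality will hold for at least one test configuration in each fibre $\{\MA(\cX,\cL)=\mu\}$ over a fixed $\mu$-compatible $\cX$. The norm identity is exactly Corollary \ref{coro:norm}. For the Mabuchi functional, Lemma \ref{lem:entropy} gives
$$H^\na(\cX,\cL)=\sum_i \xi_i A_X(F_i),$$
since $\xi_i=\vol(L)^{-1}b_i\langle\cL\rangle^n\cdot E_i$. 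Here $F_i=b_i^{-1}E_i$ as valuations, and $A_X$ is homogeneous under rescaling, so the $b_i$ factors match. It then remains to compare $\nabla_{K_X}E^\na(\cX,\cL)$ with $\nabla^-_{K_X}\|\mu\|_L$.

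The explicit formula is
$$\nabla_{K_X}E^\na(\cX,\cL)=\frac{\langle\cL^n\rangle\cdot\pi^*K_X}{\langle L^n\rangle}-\frac{n\langle L^{n-1}\rangle\cdot K_X}{\langle L^n\rangle}E^\na(\cX,\cL),$$
obtained by differentiating the quotient defining $E^\na$ and using Proposition \ref{trusiani}. By Remark \ref{rem:nablamin}, $\nabla^-_{K_X}\|\mu\|_L$ is the infimum of this same expression over all $(\cX,\cL)$ on a fixed $\mu$-compatible $\cX$ with $\MA=\mu$. Hence $\nabla_{K_X}E^\na(\cX,\cL)\geq\nabla^-_{K_X}\|\mu\|_L$, which gives $M^\na(\cX,\cL)\geq\beta_L(\mu)$. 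Danskin's theorem, applied over the compact set $K$ of Theorem \ref{thm:diff}, also shows the infimum is attained by some $\cL_\ast$, and then $M^\na(\cX,\cL_\ast)=\beta_L(\mu)$. One subtlety needs checking: the original $(\cX,\cL)$ need not be dominant or compatible in the exact form required by Remark \ref{rem:nablamin}. I will reduce to the dominant, smooth case by Lemma \ref{lem:can-assume-snc} and the birational invariance identities \eqref{tess-1}, \eqref{tess-2}. Each component of $\cX_0$ then gives a Rees valuation, so $\cX$ is automatically $\mu$-compatible. Components carrying zero mass are harmless, since they can be included in the support with $\xi_i=0$.

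With these relations, both directions are formal. If $(X,L)$ is divisorially stable with constant $\epsilon$, then for every test configuration
$$M^\na(\cX,\cL)\geq \beta_L(\MA(\cX,\cL))\geq\epsilon\|\MA(\cX,\cL)\|_L=\epsilon\|(\cX,\cL)\|.$$
Conversely, suppose $(X,L)$ is uniformly K-stable and let $\mu$ be a divisorial measure. Lemma \ref{lem:rees} and smoothness of $X$ give a dominant $\mu$-compatible $\cX$. Theorem \ref{thm:main} shows the fibre $\{\MA(\cX,\cL)=\mu\}$ is nonempty, and choosing the minimiser $\cL_\ast$ above gives
$$\beta_L(\mu)=M^\na(\cX,\cL_\ast)\geq\epsilon\|(\cX,\cL_\ast)\|=\epsilon\|\mu\|_L.$$
The moduli of stability therefore coincide.

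The main obstacle is the Mabuchi comparison, and specifically justifying that Remark \ref{rem:nablamin} ranges over all solutions on that particular $\cX$ including the one we started from. This relies on the identification of $\mathrm{argmax}$ in Theorem \ref{thm:variationalE} with solutions of $\MA=\mu$. The identification must hold even when the original $\cL$ is not written in the normalised form $L+D_t$ with the twist $c\sum F_j$ of Lemma \ref{lem:MAS}. I would handle this by noting that every vertical divisor on $\cX$ is of the form $D_t$ over the full index set $I\supseteq J$. I would then apply Theorem \ref{thm:diff} with $S$ replaced by all Rees valuations of $\cX$, which is legitimate by Proposition \ref{prop:sup}(iii).
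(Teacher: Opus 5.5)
Your proposal is correct and follows the paper's proof essentially step for step: the entropy identity from Lemma \ref{lem:entropy}, the norm identity from Corollary \ref{coro:norm}, the inequality $\nabla_{K_X}E^\na(\cX,\cL)\geq\nabla^-_{K_X}\|\mu\|_L$ from Theorem \ref{thm:diff} and Remark \ref{rem:nablamin}, and Theorem \ref{thm:main} for the converse. The differences are cosmetic. You extract an actual minimiser $\cL_\ast$ from the compactness in Danskin's theorem, since the supremum over the compact set of maximisers in $K$ is attained, whereas the paper takes a minimising sequence. You also make explicit the reduction to a smooth, dominant, $\mu$-compatible $\cX$ with all Rees valuations in the index set, which the paper leaves implicit.
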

\begin{proof}

Suppose first that $(X,L)$ is divisorially stable, and let $(\X,\L)$ be a test configuration, so that $\MA(\X,\L)$ is a divisorial measure which we denote $\mu = \{E_i,\xi_i\}$. Lemma \ref{lem:entropy} implies that $$H^\na(\cX,\cL)=\sum_i \xi_i A_X(\nu_i),$$ while Corollary \ref{coro:norm}  implies $\norm[\mu]_L=\norm[(\X,\L)]_L$. The inequality $$ \nabla_{K_X} E^\na(\cX,\cL) \geq \nabla^{-}_{K_X}\|\mu\|_L,$$ which follows from Theorem \ref{thm:diff} and Remark \ref{rem:nablamin}, then implies that $$M\NA(\X,\L) \geq \epsilon \norm[(\X,\L)]_L,$$ in turn implying that $(X,L)$ is uniformly K-stable.

	Conversely, suppose $(X,L)$ is uniformly K-stable, and let $\mu$ be a divisorial measure. By Theorem \ref{thm:main} we can find $(\cX,\cL)$ a test configuration such that $\MA(\cX,\cL)=\mu$. By the above arguments, the entropy and norm terms are again equal, and Remark \ref{rem:nablamin} gives $$\nabla^{-}_{K_X}\|\mu\|_L= \inf_{\MA(\cX,\cL)=\mu}\frac{\langle\cL^n\rangle\cdot \pi^*K_X}{\langle L^n\rangle}-\frac{n\langle L^{n-1}\rangle \cdot K_X}{\langle L^n\rangle}E^\na(\cX,\cL),$$ so taking a sequence of test configurations minimising this quantity proves that $(X,L)$ is divisorially stable.
	\end{proof}
	
\begin{remark}
The proof in fact implies that the modulus of stability involved in uniform K-stability agrees with that for divisorial stability.
\end{remark}

\subsection{K-stability with big anticanonical class} \label{sec:fano-case}

Notions of stability were previously introduced by the authors \cite{derreb:1} and Darvas--Zhang  \cite{dar:zhangbigke}  for a klt variety $X$ with big anticanonical class $-K_X$. The stability condition of  \cite{derreb:1} was \emph{uniform Ding stability}, which involves test configurations for birational models of $X$ and associated Ding invariants, whereas the approach of \cite{dar:zhangbigke} involved the $\delta$-invariant associated to divisorial valuations $E \subset Y \to X$ on birational models of $X$, defined by $$\delta(X):=\inf_{E}\frac{A_X(E)}{S_{-K_X}(E)};$$ stability in their sense requires $\delta(X)>1$. From  \cite{dar:zhangbigke, derreb:1}, these conditions are further equivalent to the existence of a singular K\"ahler--Einstein metric on $X$, provided $X$ is smooth.  These two notions of stability were subsequently proven to be equivalent by Xu  \cite{xu:big}, who further showed that these stability conditions force the anticanonical ring to be finitely generated, and that stability is equivalent to log K-stability of an associated log Fano pair. We complete this picture, by showing that these conditions are further equivalent to K-stability of $(X,-K_X)$, extending \cite[Theorem 4.10]{bj:kstab2} which treated the ample case (and for which we assume $X$ is smooth).

\begin{theorem}\label{thm:fano}
	Assume $-K_X$ is big. Then $(X,-K_X)$ is K-stable if and only if $\delta(X)>1$.
\end{theorem}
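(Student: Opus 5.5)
The plan is to reduce the statement to divisorial stability via Theorem \ref{thm:divequalsK}, and then show that for $L=-K_X$ the $\beta$-invariant of a divisorial measure becomes explicit. Throughout, "K-stable" is read as uniformly K-stable, and $X$ is smooth as in Definition \ref{def:div-stab}.

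\emph{Computing $\beta$.} Set $L=-K_X$, so that $K_X=-L$. Then
$$\nabla^-_{K_X}\|\mu\|_L=\nabla^-_{-L}\|\mu\|_L=-\nabla^+_{L}\|\mu\|_L=-\|\mu\|_L,$$
where the last equality is Proposition \ref{prop:derivativeUL}. Hence
$$\beta_L(\mu)=\sum_i\xi_iA_X(F_i)-\|\mu\|_L,$$
and divisorial stability becomes the inequality $\sum_i\xi_iA_X(F_i)\geq(1+\epsilon)\|\mu\|_L$ for all divisorial measures $\mu$.

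\emph{Stability implies $\delta(X)>1$.} Test divisorial stability on a measure supported at a single divisor, $\mu=\{(F,1)\}$. By Remark \ref{rem:expectedorder}, $\|\mu\|_L=S_L(F)$. The inequality then reads $A_X(F)\geq(1+\epsilon)S_{-K_X}(F)$ for every divisor $F$ over $X$. Taking the infimum over $F$ gives $\delta(X)\geq 1+\epsilon>1$.

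\emph{$\delta(X)>1$ implies stability.} Choose $\delta'$ with $1<\delta'\leq\delta(X)$, so that $A_X(F)\geq\delta' S_L(F)$ for every nontrivial $F$. This also holds trivially for the trivial valuation, where both sides vanish.

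The key step, and the only real obstacle, is the convexity-type bound
$$\|\mu\|_L\leq\sum_i\xi_iS_L(F_i).$$
To prove it, fix $t$ and normalise so that $\min_j t_j=0$. For each index $i$, monotonicity of the volume gives
$$\vol\Big(L-\sum_j\max(\lambda-t_j,0)F_j\Big)\leq\vol\big(L-\max(\lambda-t_i,0)F_i\big).$$
Integrating in $\lambda$ and substituting $\lambda\mapsto\lambda-t_i$ yields $S_{L,\Sigma}(t)\leq t_i+S_L(F_i)$. The integrand equals $\vol(L)$ on $[0,t_i]$, which produces exactly the $t_i$ term. Equivalently, $\cF_{\Sigma,t}\subseteq\cF_{F_i,t_i}$ at the level of filtrations, and one can use Theorem \ref{thm:descrfiltr}. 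In either form, $S_{L,\Sigma}(t)\leq\min_i\big(t_i+S_L(F_i)\big)\leq\sum_i\xi_i\big(t_i+S_L(F_i)\big)$. It follows that
$$-\langle\xi,t\rangle+S_{L,\Sigma}(t)\leq\sum_i\xi_iS_L(F_i),$$
and taking the supremum over $t$ proves the bound.

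Combining the bound with $A_X(F_i)\geq\delta' S_L(F_i)$ gives
$$\sum_i\xi_iA_X(F_i)\geq\delta'\sum_i\xi_iS_L(F_i)\geq\delta'\|\mu\|_L,$$
so that $\beta_L(\mu)\geq(\delta'-1)\|\mu\|_L$. This is divisorial stability with $\epsilon=\delta'-1$, and Theorem \ref{thm:divequalsK} concludes.
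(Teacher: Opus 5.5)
Your proposal is correct and follows the paper's proof: reduce to divisorial stability via Theorem \ref{thm:divequalsK}, use Proposition \ref{prop:derivativeUL} to get $\beta_L(\mu)=\sum_i\xi_iA_X(F_i)-\|\mu\|_L$, test on single-valuation measures for one direction, and bound $\|\mu\|_L\leq\sum_i\xi_iS_L(F_i)$ for the other. The only difference is minor: the paper gets this bound from convexity of $\xi\mapsto\|\xi\|_{-K_X}$ (a supremum of affine functions) together with Remark \ref{rem:expectedorder}, whereas you derive it directly from the monotonicity bound $S_{L,\Sigma}(t)\leq t_i+S_L(F_i)$, which also makes explicit that the Dirac mass at $F_i$, viewed inside the larger support $\Sigma$, still has norm at most $S_L(F_i)$.
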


From the above discussion, it follows that K-stability is also equivalent to uniform Ding stability, while Theorem \ref{thm:divequalsK} further characterises these conditions through divisorial stability.

\begin{proof}

We prove that divisorial stability is equivalent to the condition that $\delta(X)>1$. We begin by noting that Proposition \ref{prop:derivativeUL} simplifies the expression of $\beta(\mu)$, reducing to the case $t_0=0$: $$\beta(\mu) =\sum_i \xi_i A_X(E_i)-\|\mu\|.$$

The condition that $\delta(X)>1$ is equivalent to the existence of $\varepsilon>0$ such that for all divisorial valuations $E$,  
$$A_X(E)-S_L(E)\geq \varepsilon S_L(E),$$ from which it is clear  that divisorial stability implies $\delta(X)>1$, by taking $\mu=\{(E,1)\}$.

Conversely, suppose $\delta(X)>1$, and let $\mu =\{E_i,\xi_i\}$ now be a divisorial measure with $\{E_i\} = S\subset X^{\div}$. Then we first note that we may see $\|\cdot \|_{-K_X}$ as a function of $\xi\in \bbr^{|S|}$ with basis identified with $\{E_i\}_i$, so
$$\|\mu\|_{-K_X}=\sup_{t\in \R^{|S|}}\{S_{L,S}(t) -\langle \xi,t\rangle\}.$$
This is a supremum of affine functions in $\xi$, hence is convex in $\xi$. Therefore
\begin{align*}\beta(\mu) &= \sum_i \xi_i A_X(E_i) - \|\xi\|_{-K_X}, \\ &\geq  \sum_i \xi_i A_X(E_i)  - \sum_i \xi_i \|E_i\|_{-K_X}
\end{align*}
with $E_i$ viewed as a basis vector in $\bbr^{|S|}$. Under the above identifications, from Remark \ref{rem:expectedorder} we have $ \|E_i\|_{-K_X}=S_{-K_X}(E_i)$, thus
\begin{align*}\beta(\mu) &\geq  \sum_i \xi_i \beta(E_i), \\ &\geq \epsilon \sum_i \xi_i \|E_i\|_{-K_X}, \\&\geq\varepsilon \|\mu\|_{-K_X}. \end{align*} The result follows.\end{proof}

\section{The birational behaviour of K-stability}

In this section, we aim to prove Theorem \ref{intro:birational}.

\subsection{Pullback invariance}\label{sec:pullback}

We let $\pi:Y\to X$ be a birational morphism between smooth projective varieties, let $L_X$ be a big line bundle on $X$, and denote $L_Y = \pi^*L_X$. We are interested in the behaviour of uniform K-stability under such morphisms, and first prove the following.

\begin{theorem}\label{thm:pullbackinvariance}
    Let $\pi:Y\to X$ be a birational morphism between smooth projective varieties, and $L_X$ be a big line bundle on $X$. Let $L_Y:=\pi^*L_X$. Then, $(X,L_X)$ is uniformly K-stable if and only if $(Y,L_Y)$ is uniformly K-stable. 
\end{theorem}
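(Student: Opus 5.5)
The plan is to work entirely on the divisorial side, using Theorem \ref{thm:divequalsK} on both $X$ and $Y$ (both are smooth, so it applies), and to compare $\beta_{L_X}$ with $\beta_{L_Y}$ and $\|\cdot\|_{L_X}$ with $\|\cdot\|_{L_Y}$ measure by measure. Since $\pi$ is birational, $X$ and $Y$ have the same function field, so $X^\div=Y^\div$. Divisorial measures over $X$ and over $Y$ are therefore literally the same objects, and it suffices to show that for every divisorial measure $\mu$
$$\|\mu\|_{L_Y}=\|\mu\|_{L_X},\qquad \beta_{L_Y}(\mu)=\beta_{L_X}(\mu).$$
The remark after Theorem \ref{thm:divequalsK} then gives equality of the moduli of stability.

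\textbf{Norms.} For any finite collection $\{F_i,t_i\}$, I would compute volumes on a common model $Z$ dominating $Y$ on which all the $F_i$ are realised. Birational invariance of volume gives
$$\vol\Big(Y,\pi^*L_X-\sum_i\max(\lambda-t_i,0)F_i\Big)=\vol\Big(X,L_X-\sum_i\max(\lambda-t_i,0)F_i\Big),$$
both being the volume of the same class on $Z$. Hence $S_{L_Y,\Sigma}=S_{L_X,\Sigma}$ as functions of $t$, and the Legendre transforms agree. The same argument applies to $L_X+sK_X$ and $\pi^*(L_X+sK_X)$, so
$$\nabla^-_{\pi^*K_X}\|\mu\|_{L_Y}=\nabla^-_{K_X}\|\mu\|_{L_X}.$$

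\textbf{Log discrepancies.} Write $K_Y=\pi^*K_X+R$, with $R=K_{Y/X}$ effective and $\pi$-exceptional. For every divisorial valuation $F$ we have $A_X(F)=A_Y(F)+\ord_F(R)$, with $\ord_{\triv}(R)=0$. Comparing the two $\beta$-invariants therefore reduces to the identity
$$\nabla^-_{K_Y}\|\mu\|_{L_Y}=\nabla^-_{\pi^*K_X}\|\mu\|_{L_Y}+\sum_i\xi_i\ord_{F_i}(R).\qquad(\ast)$$
The heuristic behind $(\ast)$ is the following. Let $\sigma_R$ be the canonical section of $R$. For $s>0$, every section of $k(\pi^*L_X+sR)$ is a section of $k\pi^*L_X$ multiplied by $\sigma_R^{ks}$, because $R$ is exceptional and effective. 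So twisting by $sR$ shifts each valuation $F_i$ by $s\ord_{F_i}(R)$. At the level of $S_L$ this reads
$$S_{\pi^*L_X+sR}(t)=S_{\pi^*L_X}\big(t-s\ord_F(R)\big)+O(s^2)\ \text{(or exactly, after the appropriate shift)},$$
and taking the Legendre transform turns this shift into $+s\sum_i\xi_i\ord_{F_i}(R)$.

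\textbf{Main obstacle.} The genuine difficulty is that $\|\mu\|_L$ is only directionally (not linearly) differentiable, so $(\ast)$ cannot be obtained by adding the $\pi^*K_X$ and $R$ derivatives separately. To handle this I would use Theorem \ref{thm:diff} and Remark \ref{rem:nablamin}, which express $\nabla^-_{K_Y}\|\mu\|_{L_Y}$ as an infimum over solutions $(\cY,\cL)$ of $\MA(\cY,\cL)=\mu$ of a quantity that is linear in the direction $K_Y=\pi^*K_X+R$. It then suffices to show that, for every solution,
$$\langle\cL^n\rangle\cdot p^*R-n\big(\langle L_Y^{n-1}\rangle\cdot R\big)E^\na(\cY,\cL)=\vol(L)\sum_i\xi_i\ord_{F_i}(R).$$
By \eqref{tess-2} we have $\langle L_Y^{n-1}\rangle\cdot R=0$, so the second term vanishes. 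For the first term, I would decompose $p^*R$ on a dominant, $\mu$-compatible $\cY$ as its proper transform $R\times\pr^1$ plus vertical components $\sum_j\ord_{E_j}(R)\,b_jE_j$. Lemma \ref{asymptotic-MA} and the Monge--Amp\`ere equation should turn the vertical part into exactly $\vol(L)\sum_i\xi_i\ord_{F_i}(R)$.

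The step I expect to be hardest is showing that $\langle\cL^n\rangle\cdot\overline{R\times\pr^1}=0$. The plan is to use Proposition \ref{trusiani}(i) to see that $\overline{R\times\pr^1}$ lies in the non-ample locus of $\cL$, because $R$ lies in $B_-(\pi^*L_X)$ generically. Failing that, I would fall back on the heuristic shift argument above, made rigorous via Proposition \ref{prop:d1dinfty}.
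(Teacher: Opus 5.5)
Your route differs from the paper's, and most of it is sound. The following steps are all correct:
\begin{itemize}
\item $X^\div=Y^\div$, and $\|\mu\|_{L_Y}=\|\mu\|_{L_X}$ (together with its $K_X$-derivative version) by birational invariance of volume;
\item $A_X(F)=A_Y(F)+\ord_F(R)$, and the reduction to $(\ast)$;
\item using Theorem~\ref{thm:diff} to make the derivative linear on the solution set;
\item $\langle L_Y^{n-1}\rangle\cdot R=0$ by \eqref{tess-2};
\item evaluating the vertical part of $p^*R$ through $\MA(\cY,\cL)=\mu$.
\end{itemize}

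The gap is exactly at the step you flag. Your argument for $\langle\cL^n\rangle\cdot\overline{R\times\pr^1}=0$ fails.
\begin{itemize}
\item It is not true that $R\subseteq B_-(\pi^*L_X)$. If $L_X$ is ample then $\pi^*L_X$ is nef and $B_-(\pi^*L_X)=\emptyset$. If $L_X$ is very ample then $\pi^*L_X$ is globally generated, so Proposition~\ref{trusiani}(i) shows the opposite: $\overline{R\times\pr^1}$ is \emph{not} contained in $\Bs|m\cL|$.
\item More fundamentally, base loci are the wrong notion. What governs $\langle\cL^n\rangle\cdot G$ for big $\cL$ is the restricted volume $\vol_{\cY|G}(\cL)$, i.e.\ $B_+$, not $B_-$ or $\Bs$.
\item Your fallback does not close the gap either. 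The shift identity is in fact exact, but it reads $S_{L_Y+sR}(t)=S_{L_Y}(t+s\,r)$ with $r_i=\ord_{F_i}(R)$; your sign is reversed. It holds only for $s\ge 0$, because for $s<0$ the bundle $L_Y+sR$ has genuinely fewer sections, with constraints along the components of $R$ that are not shifts.
\item Meanwhile $\beta_{L_Y}$ uses $\nabla^-_{K_Y}$, i.e.\ the direction $-\pi^*K_X-R$. As you note yourself, one-sided derivatives cannot simply be added, and Proposition~\ref{prop:d1dinfty} does not address this.
\end{itemize}

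The missing step can be supplied in two ways.

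\emph{(a) Directly.} Let $G$ be a component of $\overline{R\times\pr^1}$ with generic fibre $R_k$. A $\C^*$-weight-$\lambda$ section of $m\cL$ restricts to $G$ through $s\mapsto s|_{R_k}$, and only $O(m)$ weights occur. Moreover $\vol_{Y|R_k}(L_Y)=0$, since $L_Y|_{R_k}$ is pulled back from $\pi(R_k)$, which has dimension $<n-1$. Hence the image of $H^0(\cY,m\cL)\to H^0(G,m\cL|_G)$ has dimension $O(m)\cdot o(m^{n-1})$, and so $\langle\cL^n\rangle\cdot G=\vol_{\cY|G}(\cL)=0$.

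\emph{(b) From what you already have.} Every section of $k(L_Y+sR)-kD$ with $s\ge0$ has the form $\sigma_R^{ks}\tau$. This gives $\|\mu\|_{L_Y+sR}=\|\mu\|_{L_Y}+s\sum_i\xi_ir_i$ for $s\ge 0$. Theorem~\ref{thm:diff} then yields $\sup_{\MA(\cY,\cL)=\mu}\langle\cL^n\rangle\cdot p^*R=\vol(L_Y)\sum_i\xi_ir_i$. On the other hand, your decomposition gives, for every solution,
\[
\langle\cL^n\rangle\cdot p^*R=\langle\cL^n\rangle\cdot\overline{R\times\pr^1}+\vol(L_Y)\sum_i\xi_ir_i\ \ge\ \vol(L_Y)\sum_i\xi_ir_i,
\]
by nonnegativity of positive products against effective divisors. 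So the horizontal term vanishes for every solution, and $(\ast)$ follows.

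With either repair your argument is complete. It even gives $\beta_{L_Y}=\beta_{L_X}$ and $\|\cdot\|_{L_Y}=\|\cdot\|_{L_X}$ measure by measure, but it relies on the full strength of Theorem~\ref{thm:divequalsK} on both varieties. The paper sidesteps the horizontal term by working with test configurations. It pulls $(\cX,\cL_X)$ back to a resolution of the blowup of the $\C^*$-closure of $\mfa$. There the line bundle is a pullback and $\overline{R\times\pr^1}$ is $\tilde\pi$-exceptional, so \eqref{tess-2} kills it (Proposition~\ref{prop:pullback1}). Conversely, it transports a test configuration of $Y$ to $X$ via its divisorial filtration and Lemma~\ref{lem:comparison}.
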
 

We begin with the following technical result, which proves that equality of filtrations implies equality of numerical invariants.

\begin{lemma}\label{lem:comparison}If two test configurations $(\Y,\L_Y)$ and $(\Y',\L'_Y)$ induce the same filtration on $R(Y,L_Y)$, then
$$M^\na(\Y,\L_Y)=M^\na(\Y',\L'_Y),\quad \norm[(\Y,\L_Y)]=\norm[(\Y',\L'_Y)].$$
\end{lemma}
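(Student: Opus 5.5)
The strategy is to recover every term of $M^\na$ and of the norm from the filtration alone: the Monge--Amp\`ere energy through Theorem \ref{thm:bchen}, and the directional derivatives through the expected vanishing orders $S_L$ and their differentiability (Proposition \ref{prop:diffSL}). The entropy is the term that needs separate treatment.

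First I reduce to dominant test configurations. By Lemma \ref{lem:can-assume-snc}, and since the invariants are unchanged under pullback to a resolution of indeterminacy, I may assume both $\Y$ and $\Y'$ are dominant. I may also pass to a common smooth dominant degeneration $\mathcal{Z}$ that dominates both. The induced filtrations are unchanged by pulling back the line bundles, because sections and vanishing orders are computed birationally. So I may take $\Y=\Y'$, with $\L_Y=L_Y+D$ and $\L'_Y=L_Y+D'$ for vertical divisors $D,D'$, both twisted by a large $\scO_{\pr^1}(k)$ so that they are big.

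Next I write each invariant in terms of the filtration. By Example \ref{ex:tc}, the filtration of $(\Y,L_Y+D)$ is the divisorial filtration $\cF_{S,t}$, where $S=\Rees(\Y)$ and $t_i=b_i^{-1}\ord_{E_i}(D)$; likewise $(\Y,L_Y+D')$ gives $\cF_{S,t'}$. By Theorem \ref{thm:descrfiltr} and Theorem \ref{thm:bchen}, $E^\na=\vol(\cF)=S_{L_Y}(t)$. Equality of filtrations therefore gives equality of the graded subalgebras $R_{\geq\lambda,\cF}$ for every $\lambda$. Via \eqref{eq:sectionsfiltration}, this means the section spaces of $kL_Y-k\sum\max(\lambda-t_i,0)E_i$ and of the corresponding $t'$-divisor agree.

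The terms involving the norm and $\nabla_{K_Y}E^\na$ need more than the value of $S_{L}$ at $L_Y$. They require $\nabla_H E^\na$, which by Lemma \ref{lem:MAS} and Proposition \ref{prop:diffSL} is an integral of the positive products
\[\Big\langle L_Y-\sum_i\max(\lambda-t_i,0)E_i\Big\rangle^{n-1}\cdot H.\]
So it suffices to show that these positive products agree for $t$ and $t'$ at almost every $\lambda$. The key step is this. Let $B_\lambda$ and $B'_\lambda$ be the two divisors $L_Y-\sum\max(\lambda-t_i,0)E_i$ and $L_Y-\sum\max(\lambda-t'_i,0)E_i$, so that $H^0(kB_\lambda)=H^0(kB'_\lambda)$ as subspaces of $H^0(kL_Y)$ for all $k$. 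Then the divisorial Zariski-type decompositions coincide: the positive part $\langle B_\lambda\rangle$ is determined by the asymptotic base ideals of the section ring, for example through the Fujita approximation \eqref{eq:fujita}, which only involves the base loci of $|mB_\lambda|$. Hence $\langle B_\lambda\rangle=\langle B'_\lambda\rangle$ as $b$-divisor classes, and all positive intersection products with pulled-back classes such as $\pi^*H$ agree. Integrating then gives equality of $\nabla_{K_Y}E^\na$ and $\nabla_{L_Y}E^\na$. The latter equality is exactly the equality of norms.

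For the entropy I will use Lemma \ref{lem:entropy}: $H^\na=\vol(L)^{-1}\sum_ib_iA_Y(E_i)\langle\L\rangle^n\cdot E_i$. I therefore need $\MA(\Y,L_Y+D)=\MA(\Y,L_Y+D')$. The masses $b_i\langle\L\rangle^n\cdot E_i$ are derivatives of $E^\na$ in the direction $E_i$, since the proof of Theorem \ref{thm:variationalE} shows $\partial_{t_i}E^\na=\vol(L)^{-1}b_i\langle L+D_t\rangle^n\cdot E_i$. Expressed through $S_L$, the mass on $E_i$ is the $\lambda$-integral of $\langle B_\lambda\rangle^{n-1}\cdot E_i$ over the region where $\lambda>t_i$. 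If the constraint $i$ is inactive for $t'$ but not for $t$, the restricted volume along $E_i$ must vanish there, because $E_i$ lies in the diminished base locus of $B_\lambda$. Making this precise is the step I expect to be the main obstacle. I would first try a direct approach: compute the measure from the filtration via the equidistribution of Theorem \ref{thm:bchen} applied to the valuations, in the spirit of \cite{bj:kstab1}. If that fails, I would perturb $t$ in the coordinate directions, where the derivatives are one-sided derivatives of $\vol(\cF_{S,t})$. For a coordinate that changes the filtration in the same way for both $t$ and $t'$, the one-sided derivatives agree. I would then conclude using concavity (Proposition \ref{prop:concave}) together with differentiability.
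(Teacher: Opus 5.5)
The genuine gap is the entropy term. You correctly reduce it to $\MA(\Y,L_Y+D)=\MA(\Y,L_Y+D')$, but you leave this unproved, and your fallback does not work as stated. The filtrations $\cF_{S,t\pm\epsilon e_i}$ and $\cF_{S,t'\pm\epsilon e_i}$ need not coincide, so partial derivatives at $t$ and at $t'$ cannot be compared coordinate by coordinate. Your heuristic also points at the wrong divisor. For $t_i<\lambda<t'_i$ it is $B'_\lambda$ all of whose sections vanish along $E_i$; $E_i$ need not lie in the diminished base locus of $B_\lambda$.

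The rest is essentially sound, though it takes a different route from the paper. For the derivative terms the paper compares $S_{L_Y+\epsilon H}(t)\geq S_{L_Y+\epsilon H}(u)$ for an auxiliary $u\leq t$ inducing the same filtration, and notes that two differentiable functions touching at $\epsilon=0$ have equal derivatives. Your pointwise comparison of positive parts also works once phrased correctly. $B_\lambda\neq B'_\lambda$ and their base ideals differ, but $H^0(kB_\lambda)=H^0(kB'_\lambda)$ is the same subspace of $H^0(kL_Y)$, so the moving parts of the Fujita approximations agree.

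The missing idea is to compare both configurations with $u:=\min(t,t')$ componentwise; the paper also passes through such a $u$. Write $D_v:=\sum_i v_ib_iE_i$. Since $\cF_{S,v}$ is monotone in $v$, we get $\cF_{S,u}=\cF_{S,t}\cap\cF_{S,t'}=\cF_{S,t}$, and $\cF_{S,u}\subseteq\cF_{S,v}\subseteq\cF_{S,t}$ for every $v$ on the segment $[u,t]$. By Theorem \ref{thm:bchen}, the function $f(v):=E^\na(\Y,L_Y+D_v)$ is therefore constant on $[u,t]$.

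This $f$ is concave (Proposition \ref{prop:concave}) and differentiable, with $\nabla f(v)=\MA(\Y,L_Y+D_v)$ by the computation in the proof of Theorem \ref{thm:variationalE}. A differentiable concave function that is affine on a segment has constant gradient along it. Indeed, for $p,q\in[u,t]$ and any direction $w$, one has $f(q+sw)\leq f(q)+s\langle\nabla f(p),w\rangle$ with equality at $s=0$, which forces $\langle\nabla f(p),w\rangle=\langle\nabla f(q),w\rangle$. Hence $\MA(\Y,L_Y+D_t)=\MA(\Y,L_Y+D_u)=\MA(\Y,L_Y+D_{t'})$, and Lemma \ref{lem:entropy} gives equality of the entropies.

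Alternatively, you can close the gap within your own framework. For $t_i<\lambda<t'_i$, every section of $kB'_\lambda=kB_\lambda$ vanishes along $E_i$, so the restricted volume of $B'_\lambda$ along $E_i$ (on a model where the $E_i$ live) is zero. Your equality of positive parts then gives $\langle B_\lambda\rangle^{n-1}\cdot E_i=\langle B'_\lambda\rangle^{n-1}\cdot E_i=0$. The mass on the trivial valuation, which your $\lambda$-integral formula does not cover, then follows by normalisation.
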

\begin{proof}

Since the filtration induced by a test configuration, and the Monge--Ampère measure, are invariant under pullback of test configurations, we may assume $\L_Y$ and $\L'_Y$ are determined on the same degeneration $\cY$ with $\cY_0=\sum_i b_i E_i$, denoting by $\nu_i$ the Rees valuation of each $E_i$. We may then write $\cL_Y=L_Y+\sum_i t_i b_i E_i$, $\cL'_Y=L_Y+\sum_i s_i b_i E_i$, and thus the associated filtrations are
 $$\cF^\lambda H^0(Y,kL_Y)=\{s\in H^0(Y,kL_Y),\,\min_i\{\ord_{E_i}(s) + kt_i\}\geq \lambda\},$$
  $$\cF'{}^\lambda H^0(Y,kL_Y)=\{s\in H^0(Y,kL_Y),\,\min_i\{\ord_{E_i}(s) + ks_i\} \lambda\,\forall i\}.$$
  Without loss of generality, we can assume $\min_i t_i = \min_i s_i=c$ for some $c$ succifiently large , which corresponds to adding a multiple $c\cO(1)$, so that $\cL_Y+c\cO(1)$ and $\cL'_Y+c\cO(1)$ are big on $\cY$.
  
  Let $\gamma_i:=\gamma(L_Y,E_i)$. Define, for each $i$, a real number $u_i$ by setting it to be
  	$$u_i:=\min\{s_i,t_i,\min_i\{\gamma_i+t_i\},\min_i\{\gamma_i+s_i\}\}.$$
  	 We first claim that the filtration $\cG$ induced by $\cM_Y:=L_Y+\sum_i u_i b_i E_i$ equals the filtrations induced by $\cL_Y$ and $\cL'_Y$. Indeed, by Theorem \ref{thm:descrfiltr}(i,ii), they induce the same subalgebras for $\lambda<c$ and $\lambda>\min\{\min_i \{\gamma_i+t_i\},\min_i\{\gamma_i+s_i\}\}$. Now, assume $\lambda$ is inbetween those bounds, and that $\min_i\{s_i,t_i\}<\min\{\min_i \{\gamma_i+t_i\},\min_i\{\gamma_i+s_i\}\}$, so that $u_i=\min\{s_i,t_i\}$. That $s\in \cF^\lambda H^0(Y,kL_Y)=\cF'{}^\lambda H^0(Y,kL_Y)$ implies for all $i$ that $\ord_{E_i}(s)+kt_i\geq \lambda$ and $\ord_{E_i}(s)+ks_i\geq \lambda$, hence $\ord_{E_i}(s)+ku_i\geq \lambda$; in particular, $\cF=\cF'\subseteq\cG$ as filtrations. Conversely, if $\ord_{E_i}(s)+ku_i\geq \lambda$ then $\ord_{E_i}(s)+kt_i\geq \lambda$, since $u_i\leq t_i$, so that $\cG\subseteq \cF=\cF'$.
  	 
  	 We have thus shown that there exists a test configuration $\cM_Y$ with $\cL_Y-\cM_Y\geq 0$ which induces the same filtration as $\cL_Y$. By monotonicity of positive intersection products, this implies
  	 $$\MA(\cY,\cM_Y)\leq \MA(\cY,\cL_Y),$$
  	 but both are probability measures, which implies they must be equal. We may apply the same argument to $\cL_Y'$, which finally implies
  	 $$\MA(\cY,\cL_Y)=\MA(\cY,\cL'_Y).$$
  	 By Lemma \ref{lem:entropy}, this implies $H^\na(\Y,\cL_Y)=H^\na(\Y,\cL'_Y)$.
  	 
  	 Equality of the norm terms is clear, since $S_L(\{E_i,t_i\})=S_L(\{E_i,s_i\})$, and we now argue similarly for the remaining energy term in $M^\na$. Let $|\varepsilon|$ small, and consider the filtrations $\cF_\varepsilon$ and $\cG_\varepsilon$ induced by $\{E_i,t_i\}$ and $\{E_i,u_i\}$ on $R(Y,L_Y+\varepsilon H)$. It is no longer the case that both filtrations agree on this modified section ring, but since $u_i\leq t_i$ we still have that $\cG_\varepsilon\subseteq \cF_\varepsilon$, so that in particular
  	 $$S_{L_Y+\varepsilon H}(\{E_i,t_i\})\geq S_{L_Y+\varepsilon H}(\{t_i,u_i\}).$$
	Seen as functions of $\varepsilon$ sufficiently small, both are $C^1$, comparable, and agree at $\varepsilon=0$, hence their derivatives at zero agree, giving
	$$\nabla_H S_{L_Y}(\{E_i,t_i\})=\nabla_H S_{L_Y} (\{E_i,u_i\}).$$
	By Lemma \ref{lem:MAS} (wherein we may in fact pick the constant $c$ so that the lemma applies to all $L'$ in a sufficiently small neighbourhood of $L_Y$ within the big cone), this implies
	$$\nabla_H E^\na(\Y,\L_Y)=\nabla_H E^\na(\Y,\L_Y'),$$
  	thereby concluding the proof.

\end{proof}

Before beginning our proof of Theorem \ref{thm:pullbackinvariance}, we note that we may assume that $\pi$ is a blowup. Indeed, we may first find a third $Z$ with compatible morphisms $\pi_Y: Z\to Y$ and $\pi_X: Z \to X$ such that the exceptional loci are of codimension one, for example by taking the normalised blowup of the exceptional locus of $\pi: Y\to X$. Being divisorial contractions, $\pi_Y$ and $\pi_Z$ are blowups, and it is clear that the result for $\pi_Y$ and $\pi_X$ implies the result for $\pi$. We thus assume that $\pi$ is a blowup of an ideal sheaf $\mfa$, with (possibly reducible and nonreduced, but effective) exceptional divisor $E$. We begin by proving the following.

\begin{proposition}\label{prop:pullback1}

Suppose $(\X,\L_X)$ is a test configuration for $(X,L_X)$, with $\X$ smooth, and $\Y$ a smooth degeneration of $Y$ with a $\bbc^*$-equivariant morphism $\tilde \pi:\Y\to\X$ commuting with $\pi$. Then, there exists a line bundle $\L_Y$ on $\Y$ such that:
\begin{enumerate}[(i)]
	\item $(\Y,\L_Y)$ is a test configuration for $(Y,L_Y)=(Y,\pi^*L_X)$;
	\item $M\NA(\X,\L_X) = M\NA(\Y,\L_Y)$;
	\item $\|(\X,\L_X)\| = \|(\Y,\L_Y)\|$;
	\item $(\Y,\L_Y)$ and $(\X,\L_X)$ induce the same filtrations on the isomorphic section rings $(X,L_X)\simeq (Y,L_Y)$.
\end{enumerate}
\end{proposition}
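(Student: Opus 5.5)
The natural choice is to take $\L_Y := \tilde\pi^*\L_X$, that is, to pull back the test configuration along $\tilde\pi$. Since $\tilde\pi$ is $\C^*$-equivariant and commutes with $\pi$, over $\Y-\Y_0 \cong Y\times\C$ it restricts to $\pi\times\mathrm{id}$. Hence $\L_Y|_{\Y-\Y_0}\cong \pi^*L_X = L_Y$, and $(\Y,\L_Y)$ is a test configuration for $(Y,L_Y)$, which gives (i). If $(\X,\L_X)$ is not dominant, I would first pass to a dominant model. Up to further blowing up (using smoothness of $\Y$), I would assume both are dominant and that $\Y\to Y\times\pr^1$ factors compatibly with $\X\to X\times\pr^1$.

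For (iv), note that $H^0(Y,kL_Y)=\pi^*H^0(X,kL_X)$ because $\pi_*\cO_Y=\cO_X$. For $s\in H^0(X,kL_X)$, the meromorphic section $t^{-\lambda}\widetilde{\pi^*s}$ of $k\L_Y$ equals $\tilde\pi^*(t^{-\lambda}\tilde s)$. Since $\X$ is normal and $\tilde\pi$ is proper birational with $\tilde\pi_*\cO_\Y=\cO_\X$, this pullback is regular on $\Y$ if and only if $t^{-\lambda}\tilde s$ is regular on $\X$. Therefore the two filtrations of Example \ref{ex:tc} agree under the identification of section rings.

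For the numerical invariants, the plan is to compare term by term, using birational invariance of positive intersection products together with Equations \eqref{tess-1} and \eqref{tess-2}.
\begin{itemize}
\item \emph{Energy.} Since $\vol(L_Y)=\vol(L_X)$ and $\langle (\tilde\pi^*(\L_X+\scO_{\pr^1}(k)))^{n+1}\rangle=\langle (\L_X+\scO_{\pr^1}(k))^{n+1}\rangle$, we get $E\NA(\Y,\L_Y)=E\NA(\X,\L_X)$.
\item \emph{Norm (iii).} The same invariance applies to $\L_X+\scO_{\pr^1}(k)+sL_X$ for small $s$, whose pullback is $\L_Y+\scO_{\pr^1}(k)+sL_Y$. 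Differentiating in $s$ gives $\nabla_{L_Y}E\NA(\Y,\L_Y)=\nabla_{L_X}E\NA(\X,\L_X)$.
\item \emph{Mabuchi (ii).} The difficulty is that $K_Y=\pi^*K_X+K_{Y/X}$, so $\nabla_{K_Y}$ and $\nabla_{K_X}$ differ. The entropies also differ, since $K^{\log}_{\Y/Y\times\pr^1}\neq \tilde\pi^*K^{\log}_{\X/X\times\pr^1}$ in general.
\end{itemize}

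The central step is therefore to show that the discrepancies cancel. I would write $K^{\log}_\Y=\tilde\pi^*K^{\log}_\X+\E$ with $\E$ a $\tilde\pi$-exceptional divisor, as in Lemma \ref{lem:can-assume-snc}. Using $K_{Y\times\pr^1}=p^*(\pi^*K_X+K_{Y/X})+K_{\pr^1}$, we get
$$K^{\log}_{\Y/Y\times\pr^1}=\tilde\pi^*K^{\log}_{\X/X\times\pr^1}+\E-p_Y^*K_{Y/X}.$$
The term $p_Y^*K_{Y/X}$ is the closure of $K_{Y/X}\times\C$, so it is supported on $\tilde\pi$-exceptional divisors. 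Hence, by \eqref{tess-2} applied on $\Y$,
$$\langle\L_Y^n\rangle\cdot\E=\langle\L_Y^n\rangle\cdot p_Y^*K_{Y/X}=0,$$
and $H\NA(\Y,\L_Y)=H\NA(\X,\L_X)$.

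For the derivative term, I would use the formula from the proof of Theorem \ref{thm:diff}:
$$\nabla_{K_Y}E\NA=\frac{\langle\L_Y^n\rangle\cdot K_Y-n(\langle L_Y^{n-1}\rangle\cdot K_Y)E\NA}{\vol(L_Y)}.$$
By \eqref{tess-2} on $Y$ and on $\Y$, the $K_{Y/X}$ parts contribute zero, so $\nabla_{K_Y}E\NA(\Y,\L_Y)=\nabla_{K_X}E\NA(\X,\L_X)$. Summing the entropy and derivative terms proves (ii).

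I expect the main obstacle to be justifying that the vertical part of $p_Y^*K_{Y/X}$ and of $\E$ is genuinely $\tilde\pi$-exceptional: components of $\Y_0$ that map onto components of $\X_0$ must not be counted. I would handle this by computing the coefficients of $\E$ along such components, using the log-discrepancy formula of Lemma \ref{lem:entropy}. The point is that the Rees valuations of these components coincide as valuations on the common function field, so $A_Y=A_X-\ord(K_{Y/X})$ there. This should match the $-p_Y^*K_{Y/X}$ correction exactly. Alternatively, one could deduce (ii) and (iii) from (iv) via a variant of Lemma \ref{lem:comparison}, since the valuative expressions of Lemmas \ref{lem:MAS} and \ref{lem:entropy} depend only on the filtration and the log discrepancies.
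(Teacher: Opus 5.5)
Your choice $\L_Y=\tilde\pi^*\L_X$ and your treatment of (i), (iii) and (iv) match the paper. The argument for (ii), however, contains a false step.

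\textbf{The false step.} You assert that $p_Y^*K_{Y/X}$ is supported on $\tilde\pi$-exceptional divisors, so that $\langle\L_Y^n\rangle\cdot p_Y^*K_{Y/X}=0$. But the closure of $K_{Y/X}\times\C$ is only the strict transform. The total transform $p_Y^*K_{Y/X}$ also contains every component $F$ of $\Y_0$ whose centre on $Y$ lies in the exceptional locus of $\pi$, with coefficient $b_F\,v_F(K_{Y/X})>0$. Such an $F$ can be the strict transform of a component of $\X_0$. Your own log-discrepancy computation shows this: $\E$ vanishes along $F$ precisely because $A_Y(v_F)=A_X(v_F)-v_F(K_{Y/X})$, that is, because the $-p_Y^*K_{Y/X}$ term is nonzero there.

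\textbf{A counterexample.} Take $n\geq 2$, $Y=\Bl_pX$, and $\X=\Bl_{(p,0)}(X\times\pr^1)$ with exceptional divisor $E$. Let $\L_X=L-cE$ with $L$ ample and $0<c\ll 1$. Let $\Y=\Bl_C\X$, where $C$ is the strict transform of $\{p\}\times\pr^1$ and $G$ is the new exceptional divisor. The strict transform $E'$ of $E$ is a non-exceptional component of $\Y_0$. Then $p_Y^*K_{Y/X}=(n-1)(E'+G)$, so
$$\langle\L_Y^n\rangle\cdot p_Y^*K_{Y/X}=(n-1)c^n\neq 0.$$
Explicitly, $K_{\X/X\times\pr^1}=nE$ while $K_{\Y/Y\times\pr^1}=E'$. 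Hence $H\NA(\X,\L_X)=nc^n/\vol(L)$ but $H\NA(\Y,\L_Y)=c^n/\vol(L)$. Neither the entropies nor the $K$-derivative terms agree separately.

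\textbf{The repair.} Your formulas already contain it. Set $Q:=\vol(L)^{-1}\langle\L_Y^n\rangle\cdot p_Y^*K_{Y/X}$; it enters with opposite signs.
\begin{itemize}
\item Your expression for $K^{\log}_{\Y/Y\times\pr^1}$ gives $H\NA(\Y,\L_Y)=H\NA(\X,\L_X)-Q$.
\item The identity $p_Y^*K_Y=\tilde\pi^*p_X^*K_X+p_Y^*K_{Y/X}$ gives $\nabla_{K_Y}E\NA(\Y,\L_Y)=\nabla_{K_X}E\NA(\X,\L_X)+Q$. Here only $\langle L_Y^{n-1}\rangle\cdot K_{Y/X}=0$, on $Y$, is legitimately killed by \eqref{tess-2}.
\end{itemize}
So only the sum $M\NA$ is invariant. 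This is why the paper compares the absolute canonical classes, $K_\Y=\tilde\pi^*K_\X+\E$, rather than the relative ones. Grouping $K^{\log}_{\Y/Y\times\pr^1}+p_Y^*K_Y$ leaves, up to the pullback of $K_{\pr^1}$, only the genuinely $\tilde\pi$-exceptional $\E$.

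Your fallback via a variant of Lemma \ref{lem:comparison} has the same problem. Since $A_Y\neq A_X$ on exactly these valuations, it also needs this cancellation rather than termwise equality.
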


\begin{proof}
Let $\L_Y:=\tilde \pi^*\L_X$. We claim that $M\NA(\X,\L_X) = M\NA(\Y,\L_Y)$. Indeed, as Weil divisors, $K_{\Y} = K_{\X} + \E$, where $\E$ is $\pi$-exceptional (but not necessarily effective), and $K_Y$ and $K_X$ compare similarly. As such, the argument of Proposition \ref{lem:can-assume-snc} (which is a consequence of the birational behaviour of positive intersection products, through Equations \eqref{tess-1} and \eqref{tess-2}) applies to this situation, proving the claim. The same argument implies $\|(\X,\L_X)\| = \|(\Y,\L_Y)\|$. The claim about filtrations is clear, by the definition of the filtration associated to a test configuration given in Example \ref{ex:tc}.
\end{proof}

This implies that uniform K-stability of $(Y,L_Y)$ implies that of $(X,L_X)$, and what remains is to prove the converse.

\begin{proof}[Proof of Theorem \ref{thm:pullbackinvariance}.]

Suppose $(Y,L_Y)$ is uniformly K-stable. Let $(\X,\L_X)$ be a test configuration for $(X,L_X)$. We now produce a degeneration $\Y$ of $Y$ satisfying the hypothesis of Proposition \ref{prop:pullback1}, analogously to Stoppa for blowups at points \cite{stoppa:1,stoppa:2}.

Without loss of generality, we may assume $\X$ is smooth. Realising $\pi: Y \to X$ as the blowup of an ideal sheaf $\mfa$, we take the normalised blowup of the $\C^*$-closure of $\mfa$ in $\X$, denoted $\Y$. Note $\Y$ naturally admits a $\C^*$-action, being the blowup of a $\C^*$-invariant ideal, while as $\Y$ is normal, the surjective morphism $\Y \to\C$ is automatically flat, and it is an obvious consequence of the blowup construction that $(\Y,\L_Y)$ restricts to $(Y,L_Y)$ away from zero. We may then replace $\Y$ by an equivariant resolution of singularities, which satisfies the same properties. Proposition \ref{prop:pullback1} then shows $(X,L_X)$ is uniformly K-stable.

Suppose now $(X,L_X)$ is uniformly K-stable. Given a test configuration $(\Y,\L_Y)$ for $(Y,L_Y)$, we can consider its associated divisorial filtration of the form  $\cF_{(E_i,t_i)}$. We may produce a degeneration $\X$ of $X$ such that the $E_i$ are realised as components of $\X_0$, which we write as $E_{i,X}$ respectively, and without loss of generality we may assume there exists a morphism $\pi:\cY\to \cX$. This produces a test configuration $(\X,\L_X)$, where   
\begin{equation}\label{eq:tc1} 
\L_X = \sum_i t_i E_{i,X}+c\sum_j F_j,
\end{equation} 
where the $F_j$ are the remaining components of the central fibre of $\X$, and $c$ is a sufficiently large constant so that the $F_j$ do not contribute to the filtration induced by $\L_X$, as in Theorem \ref{thm:descrfiltr}. In particular, the filtration induced by $\L_X$ and $\L_Y$ on the (isomorphic) section rings of $L_X$ and $L_Y$ coincide; they also equal the filtration induced by $\pi^*\cL_X$ by Proposition \ref{prop:pullback1}.

Assume then that $(X,L_X)$ is uniformly K-stable, so that
\begin{equation}\label{eq:kstab}
M\NA(\X,\L_X) \geq \varepsilon \|(\X,\L_X)\|.
\end{equation}
By Proposition \ref{prop:pullback1}, we have
\begin{equation*}
M\NA(\X,\L_X)=M\NA(\Y,\pi^*\L_X),\,\quad \|(\X,\L_X)\|=\|(\Y,\pi^*\L_X)\|.
\end{equation*}
By the discussion above on the induced filtrations, from Lemma \ref{lem:comparison}, 
\begin{equation*}
M\NA(\Y,\L_Y)=M\NA(\Y,\pi^*\L_X),\,\quad \|(\Y,\L_Y)\|=\|(\Y,\pi^*\L_X)\|.
\end{equation*}
Thus $(Y,L_Y)$ is uniformly K-stable, concluding the proof of $(ii)$.\end{proof}

\subsection{Pushforward invariance.}

We next turn to the proof of Theorem \ref{intro:birational}, which we recall the statement of:

\begin{theorem}\label{thm:pushforwards}
    Let $\pi:Y\dashrightarrow X$ be a birational contraction between smooth projective varieties. Let $L_Y$ be a big line bundle on $Y$ and $L_X$ be a big line bundle on $X$ such that, on the pullback to a resolution of indeterminacy $Z$, $L_Y = L_X+E$ with $E$ an exceptional effective $\bbr$-Cartier divisor. Then, $(X,L_X)$ is uniformly K-stable if and only if $(Y,L_Y)$ is uniformly K-stable. 
\end{theorem}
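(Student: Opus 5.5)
First reduce to a morphism. Let $Z$ be a smooth resolution of indeterminacy with morphisms $p:Z\to Y$, $q:Z\to X$. By Theorem \ref{thm:pullbackinvariance}, $(Y,L_Y)$ is uniformly K-stable if and only if $(Z,p^*L_Y)$ is, and $(X,L_X)$ if and only if $(Z,q^*L_X)$ is. Since $p^*L_Y=q^*L_X+E$ on $Z$ with $E$ effective and $q$-exceptional (that $\pi$ is a contraction ensures the relevant exceptional divisors lie over $X$), it suffices to prove the following. For a single smooth projective $Z$, a big $L$ on $Z$ and an effective divisor $E$ exceptional for a birational morphism $q:Z\to X$ with $L=q^*L_X$, the pairs $(Z,L)$ and $(Z,L+E)$ are simultaneously uniformly K-stable.

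The key input is that adding $E$ does not change the section ring. Since $E$ is $q$-exceptional and effective, $H^0(Z,k(q^*L_X+E))=H^0(Z,kq^*L_X)$ for all $k$, by the negativity lemma or directly from $q_*\cO_Z(kE)=\cO_X$. Hence the volumes of all the divisors appearing in $S_{L,\Sigma}(t)$ agree for $L$ and $L+E$. More precisely, for any finite collection $\{F_i,t_i\}$,
$$\vol\Bigl(L+E-\sum_i\max(\lambda-t_i,0)F_i\Bigr)=\vol\Bigl(L-\sum_i\max(\lambda-t_i,0)F_i\Bigr).$$
This holds because a section vanishing to order $k\max(\lambda-t_i,0)$ along the $F_i$ of $k(L+E)$ is the same as such a section of $kL$ multiplied by the canonical section of $kE$. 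I will check this on the level of filtrations: the divisorial filtrations $\cF_{S,t}$ on $R(Z,L)$ and $R(Z,L+E)$ are identified under the isomorphism of section rings given by multiplication by $s_E^k$, up to a shift by $k\ord_{F_i}(E)$ in the vanishing orders. To avoid this shift, I will compare instead using Theorem \ref{thm:descrfiltr}. The valuations $F_i$ centred on $\mathrm{supp}(E)$ need care, and the cleanest route is to reduce, via Lemma \ref{lem:comparison} style arguments, to statements about filtrations induced by test configurations.

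I then argue via divisorial stability, using Theorem \ref{thm:divequalsK}. I want to show that for every divisorial measure $\mu$ we have $\|\mu\|_{L+E}=\|\mu\|_L$, that $\sum\xi_iA_Z(F_i)$ is trivially the same, and that $\nabla^-_{K_Z}\|\mu\|_{L+E}=\nabla^-_{K_Z}\|\mu\|_L$. For the norm, I use Proposition \ref{prop:sup}: $\|\mu\|$ is a Legendre transform of $t\mapsto\vol(\cF_{S,t})$. The filtration on $R(Z,L+E)$ given by $\min_i\{\ord_{F_i}(s)+kt_i\}$ corresponds, under $s\mapsto s/s_E^k$, to the filtration on $R(Z,L)$ given by $\min_i\{\ord_{F_i}(s')+k(t_i+\ord_{F_i}E)\}$. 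Thus $\vol$ of the first at $t$ equals $\vol$ of the second at $t+\ord_F E$. Here $\ord_FE=(\ord_{F_i}E)_i$ is a fixed vector, so the supremum changes by $\langle\xi,\ord_FE\rangle$.

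This shift looks like an obstruction, but on the test-configuration side it is harmless. I will redo the comparison directly with test configurations: $(\mathcal Z,\cL)\mapsto(\mathcal Z,\cL+E)$, with $E$ pulled back from $Z$, is a bijection of test configurations. Its effect on the filtration is precisely this shift, which gives an isomorphism of filtered section rings. Adding the horizontal divisor $E$ changes $\langle\cL^n\rangle$ not at all, since $E$ lies in the non-nef part. I will show that $\langle(\cL+E)^{n+1}\rangle=\langle\cL^{n+1}\rangle$ and $\langle(\cL+E)^n\rangle=\langle\cL^n\rangle$ as classes, since adding the fixed part $E$ leaves the positive product unchanged (\cite{bfj:teissier}, Proposition 3.3 type statement on divisorial Zariski decomposition). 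Then $E^\na$, $H^\na$ (which involves $\langle\cL^n\rangle\cdot K^{\log}$) and the derivatives $\nabla_{K_Z}E^\na$ and $\nabla_L E^\na$ agree. For the derivative in the direction $K_Z$, I use $\langle(\cL+E+tK_Z)^n\rangle$ versus $\langle(\cL+tK_Z)^n\rangle$; for small $t$ these need not coincide, but the derivative at $t=0$ is $n\langle\cL^n\rangle\cdot K_Z$ in both cases, because the positive products agree at $t=0$. The norm uses $\nabla_{L}$ versus $\nabla_{L+E}$, and these differ by $\nabla_E E^\na$. This term is $\langle\cL^n\rangle\cdot E/\vol - n(\langle L^{n-1}\rangle\cdot E)E^\na/\vol$, which vanishes since $\langle\cL^n\rangle\cdot E=0$ by Equation \eqref{tess-2}-type orthogonality.

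I expect the main obstacle to be this orthogonality statement, $\langle\cL^n\rangle\cdot E=0$ and $\langle(\cL+E)^{k}\rangle=\langle\cL^{k}\rangle$ on $\mathcal Z$. On $Z$ itself this is immediate, because $E$ is $q$-exceptional and $L=q^*L_X$. On the test configuration, however, I must pass to a dominant model and ensure that $E\times\bbp^1$ is still contained in the negative part of $\cL+E$. My first attempt will be to reduce, as in Theorem \ref{thm:pullbackinvariance}, to test configurations that are pulled back from test configurations $\X$ of $X$. For these, $E\times\bbp^1$ is exceptional over $\X$ and Equations \eqref{tess-1}, \eqref{tess-2} apply. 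The reduction to such test configurations is justified by Lemma \ref{lem:comparison}, which shows that only the induced filtration matters.
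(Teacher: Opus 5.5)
Your reduction to a single smooth $Z$ with $L=q^*L_X$ works, and so does the comparison of $(\mathcal{Z},\cL)$ with $(\mathcal{Z},\cL+E')$ on a fixed smooth total space, provided $E'$ is the \emph{strict} transform of $E\times\pr^1$, as in Proposition \ref{prop:pullbacks}. In that case $H^0(m(\cL+E'))=s_{E'}^m\cdot H^0(m\cL)$. Hence positive products, $\MA$, $E^\na$, $H^\na$ and $\nabla_{K_Z}E^\na$ all agree, and $M^\na$ is preserved.

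Adding the total pullback instead changes the vertical divisor, and with it the filtration and $E^\na$. A shift $t\mapsto t+\ord_FE$ of coefficients is not an isomorphism of filtrations. Your opening volume identity also fails whenever $\ord_{F_i}E>0$.

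\textbf{The gap: the norm is not preserved.} Let $\rho:\mathcal{Z}\to Z\times\pr^1$. In $\|\cdot\|_{L+E}=\nabla_{L+E}E^\na$ one differentiates along the pullback $\rho^*E=E'+\sum_i\ord_{E_i}(E\times\pr^1)E_i$. Equation \eqref{tess-2} only kills $E'$; the vertical components are in general not exceptional over $\X$. In fact
\[\|(\mathcal{Z},\cL+E')\|_{L+E}=\|(\mathcal{Z},\cL)\|_L+\int\ord_E\,d\MA(\mathcal{Z},\cL).\]
This is exactly the shift $\langle\xi,\ord_FE\rangle$ you found on the divisorial side. That computation was correct, and the shift is not harmless.

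For a concrete counterexample, take:
\begin{itemize}
\item $Z=\Bl_p\pr^2$ and $H=q^*\cO(1)$;
\item $\X=\Bl_{(p,0)}(\pr^2\times\pr^1)$ with exceptional divisor $F$, and $\L_X=H-cF$ for small $c>0$;
\item $\mathcal{Z}$ the blowup of $\X$ along the strict transform of $\{p\}\times\pr^1$, whose exceptional divisor is $E'$, with $\cL$ the pullback of $\L_X$.
\end{itemize}
Then $\MA=(1-c^2)\delta_{\triv}+c^2\delta_{\ord_E}$ and $\rho^*E=E'+F_{\mathcal{Z}}$. This gives $\|(\mathcal{Z},\cL)\|_H=2c^3/3$ but $\|(\mathcal{Z},\cL+E')\|_{H+E}=c^2+2c^3/3$. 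Definition \ref{def:norm-def} gives the same two values for this measure.

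\textbf{What your argument does prove.} Since $\|\cdot\|_{L+E}\ge\|\cdot\|_L$ while $M^\na$ is unchanged, uniform K-stability of $(Z,L+E)$, i.e.\ of $(Y,L_Y)$, implies that of $(Z,L)$, i.e.\ of $(X,L_X)$.

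\textbf{What is missing for the converse.} You need an extra estimate $\int\ord_E\,d\mu\le C\,M^\na$. The example shows this cannot come from the norm, since $\int\ord_E\,d\mu/\|\mu\|_H=3/(2c)$ is unbounded. It has to come from the entropy. The inequality $A_Z\ge\mathrm{lct}(Z;E)\,\ord_E$, with $\mathrm{lct}(Z;E)>0$, gives $\int\ord_E\,d\mu\le\mathrm{lct}(Z;E)^{-1}H^\na$ by Lemma \ref{lem:entropy}. Combine this with $M^\na\ge\epsilon\|\cdot\|_L$ and a bound $|\nabla_{K_Z}E^\na|\le C\|\cdot\|_L$. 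That bound is standard for ample $L$ but would have to be proved for big $L$. Together these give $\|\cdot\|_{L+E}\le C'M^\na$.

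Note that the paper's own argument asserts the same equality of norms in Proposition \ref{prop:pullbacks}(iii), which this example also contradicts. So following the paper does not close this step.
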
 

Although this result implies Theorem \ref{thm:pullbackinvariance}, we actually use that result to prove Theorem \ref{thm:pushforwards}, since in the statement the map $\pi$ is only assumed to be a rational map. In particular, by Theorem \ref{thm:pullbackinvariance}, we may assume that $Y=Z$, with $L_Y = L_X + E$ for $E$ exceptional and effective, so that $L_X=\pi_*L_Y$;  we may further assume that $\pi$ is a blowup.

We now give the necessary ingredients in the proof of Theorem \ref{thm:pushforwards}.

\begin{proposition}\label{prop:pullbacks}

Suppose $(\X,\L_X)$ is a test configuration for $(X,L_X)$, with $\X$ smooth, and $\Y$ a smooth degeneration of $Y$ with a $\bbc^*$-equivariant morphism $\tilde \pi:\Y\to\X$ commuting with $\pi$. Then, there exists a line bundle $\L_Y$ on $\Y$ such that:
\begin{enumerate}[(i)]
	\item $(\Y,\L_Y)$ is a test configuration for $(Y,L_Y)=(Y,\pi^*L_X+E)$;
	\item $M\NA(\X,\L_X) = M\NA(\Y,\L_Y)$;
	\item $\|(\X,\L_X)\| = \|(\Y,\L_Y)\|$;
	\item $(\Y,\L_Y)$ and $(\X,\L_X)$ induce the same filtrations on the isomorphic section rings $(X,L_X)\simeq (Y,L_Y)$.
\end{enumerate}
\end{proposition}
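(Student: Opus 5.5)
The natural candidate is $\L_Y := \tilde\pi^*\L_X + E_{\Y}$. Here $E_{\Y}$ denotes the $\C^*$-equivariant extension of $E\times\C$ to $\Y$, namely its closure, which is $\R$-Cartier since $\Y$ is smooth. Away from the central fibre we have $\L_Y|_{\Y-\Y_0} \cong \pi^*L_X + E = L_Y$, so (i) holds. For (iv), recall that $\pi_*$ identifies $H^0(Y,k(\pi^*L_X+E))$ with $H^0(X,kL_X)$ because $E$ is effective and exceptional. Using the description of the induced filtration in Example \ref{ex:tc}, a section $s$ satisfies $t^{-\lambda}\tilde s \in H^0(\Y,k\L_Y)$ if and only if its pushforward gives a regular section of $k\L_X$ across $\X_0$. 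Indeed, $\tilde\pi_*\scO_\Y(k\tilde\pi^*\L_X + kE_\Y) = \scO_\X(k\L_X)$, since $E_\Y$ is effective and $\tilde\pi$-exceptional (the image of $E_\Y$ has codimension at least two in $\X$, because $E$ is $\pi$-exceptional). Hence the two filtrations coincide.

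For (ii) and (iii), I would compare positive intersection products using the general principle that adding an effective exceptional divisor does not change the positive product. On $Z$-type models this is the statement $\langle (\tilde\pi^*\L_X + E_\Y)^{n+1}\rangle = \langle \tilde\pi^*\L_X^{n+1}\rangle$: by \cite{bfj:teissier}, the volume of $\tilde\pi^*\L_X + F$ equals that of $\L_X$ for any effective $\tilde\pi$-exceptional $F$, because the sections agree. The same argument applies to $\L_X + \scO(k) + sK$ for small $s$, after twisting by $\scO_{\pr^1}(k)$, and the analogous identity holds on $Y$ with $\pi^*L_X+E$ versus $L_X$. This gives equality of the energies $E\NA$ and of the norms, which are derivatives in the direction of $L$. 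I expect the cleanest route is through Theorem \ref{thm:bchen}: $E\NA$ equals the volume of the induced filtration, which is preserved by (iv). For the norm, I would use the directional derivative along $L$, computed by homogeneity as in Proposition \ref{prop:derivativeUL}.

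The $K$-derivative and entropy terms are the genuine obstacle, since $\nabla_{K_Y}$ is taken in a different direction than $\nabla_{K_X}$. Write $K_Y = \pi^*K_X + K_{Y/X}$, where $K_{Y/X}$ is exceptional. The pullback part is handled by the volume equality $\vol(\tilde\pi^*(\L_X + sK_X) + E_\Y) = \vol(\L_X + sK_X)$ for $s$ small. This step needs care, because $E$ being effective exceptional keeps it in the "fixed part" only if $\L_X+sK_X$ is big, which holds for $|s|$ small. For the remaining exceptional direction $K_{Y/X}$, I would use that $\langle \L_Y^n\rangle \cdot F = 0$ for $\tilde\pi$-exceptional $F$, by \eqref{tess-2} together with the fact that $\langle\L_Y\rangle = \langle\tilde\pi^*\L_X\rangle$ as $b$-classes. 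Combining this with Lemma \ref{lem:entropy}, where the log discrepancies over $Y$ and $X$ differ exactly by $\ord(K_{Y/X})$, the extra exceptional contributions from the derivative term and the entropy term cancel. I would therefore follow the computation of Proposition \ref{prop:pullback1}, checking that $M\NA$ is unchanged. The main point requiring verification is that the positive products $\langle \L_Y^n\rangle$ and $\langle \tilde\pi^*\L_X^n\rangle$ agree as classes, and not merely in top degree. I would prove this from the definition of $\langle\cdot\rangle$ as a supremum over nef models, using that any nef model of $\tilde\pi^*\L_X + E_\Y$ minus its fixed part absorbs $E_\Y$.
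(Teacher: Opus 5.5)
Your choice $\L_Y=\tilde\pi^*\L_X+E_\Y$, the pushforward argument for (iv), and the observation that $\langle\L_Y^n\rangle=\langle\tilde\pi^*\L_X^n\rangle$ as classes (which the negativity lemma gives) are exactly the paper's ingredients. However, the step giving (iii) fails.

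On the dominant model, the direction $L_Y$ is $p^*(\pi^*L_X+E)=\tilde\pi^*L_X+E_\Y+V$, where $p:\Y\to Y\times\pr^1$ and $V:=p^*(E\times\pr^1)-E_\Y\geq 0$ is vertical. $V$ is supported on the components of $\Y_0$ whose centre on $Y\times\pr^1$ lies in $E\times\{0\}$. These are typically strict transforms of components of $\X_0$, so $V$ is \emph{not} $\tilde\pi$-exceptional. Hence $\L_Y+sL_Y=\tilde\pi^*(\L_X+sL_X)+(1+s)E_\Y+sV$, and volume invariance only absorbs the $E_\Y$ term. Your homogeneity route gives $\|(\X,\L)\|=E\NA(\X,\L)-\vol(L)^{-1}\langle\L^n\rangle\cdot(\L-L)$. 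Combined with $\L_Y-L_Y=\tilde\pi^*(\L_X-L_X)-V$, this yields
\[
\|(\Y,\L_Y)\|=\|(\X,\L_X)\|+\vol(L)^{-1}\langle\L_Y^n\rangle\cdot V=\|(\X,\L_X)\|+\sum_i\xi_i\,v_i(E),
\]
where $\xi=\MA(\X,\L_X)$ and the $v_i$ are its Rees valuations. The filtration route does not help either. Statement (iv) matches filtrations only at $s=0$: under $R(X,(1+s)L_X)\simeq R(Y,(1+s)L_Y)$, the filtration of $\L_Y+sL_Y$ has coefficients $t_i+s\,v_i(E)$. For (ii), by contrast, your cancellation does work. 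But the term cancelling the entropy change $A_Y=A_X-v(K_{Y/X})$ comes from the analogous vertical part of $p^*(K_{Y/X}\times\pr^1)$, not from $\tilde\pi$-exceptional divisors, which contribute zero.

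The correction is genuinely nonzero, as the following example shows.
\begin{itemize}
\item Let $X$ be a surface and $L_X$ ample, with $V_0:=L_X^2$.
\item Let $Y=\Bl_pX$ and $E=E_p$.
\item Let $\X=\Bl_{(p,0)}(X\times\pr^1)$, with exceptional divisor $G$, and $\L_X=L_X-cG$ for small $c>0$.
\item Let $\Y=\Bl_{E_p\times\{0\}}(Y\times\pr^1)$, which is the blowup of $\X$ along the strict transform of $\{p\}\times\pr^1$.
\end{itemize}
Here $\tilde\pi^*G=G_\Y$ and $V=G_\Y$, so $\L_Y+sL_Y+k\Y_0=\tilde\pi^*\bigl((1+s)L_X+k\X_0-(c-s)G\bigr)+(1+s)E_\Y$. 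This gives $E\NA(\Y,\L_Y+sL_Y)=-(c-s)^3/(3(1+s)^2V_0)$, whereas $E\NA(\X,\L_X+sL_X)=-c^3/(3(1+s)^2V_0)$. The norms are therefore $2c^3/(3V_0)+c^2/V_0$ and $2c^3/(3V_0)$ respectively, while (ii) and (iv) do hold.

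So (iii) is false for this $\L_Y$. By Lemma \ref{lem:comparison}, it is also false for any $\L_Y$ satisfying (iv). Only $\|(\Y,\L_Y)\|\geq\|(\X,\L_X)\|$ survives. That suffices for the implication from $(Y,L_Y)$ to $(X,L_X)$ in Theorem \ref{thm:pushforwards}, but not for the converse. The paper's proof has the same omission: it checks $\langle\L_Y^{n+1}\rangle$ and the canonical-class terms, but never the term $\langle\L_Y^n\rangle\cdot L_Y$ that enters the norm.
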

\begin{proof}
We set
$$\cL_Y:=\tilde\pi^*\cL_X+E',$$
where $E'$ is the strict transform of $E\times\bbp^1\subset Y\times\bbp^1$, which is $\R$-Cartier since $\cY$ is smooth. Since $E$ is $\R$-Cartier, $\pi$-exceptional and effective, we have 
$$H^0(Y,m(\pi^*L_X + E)) = H^0(X,mL_X)$$
by the projection formula, so $\langle L_X^n\rangle = \langle L_Y^n\rangle.$ Similarly for $F \subset Y$ a second $\pi$-exceptional, effective divisor $$H^0(Y,m(\pi^*L_X + E + tF)) =H^0(X,mL_X),$$ again by the projection formula. This implies, using differentiability of volume, that $ \langle L_X^{n-1}\rangle\cdot F = 0;$ this also follows from Equation \eqref{tess-2}. It follows that  $$\langle L_X^{n-1}\rangle\cdot K_X =  \langle L_Y^{n-1}\rangle \cdot K_Y,$$ since the difference of $K_Y$ and $\pi^*K_X$ is exceptional. The same arguments imply $\langle \L_Y^{n+1}\rangle = \langle \L_X^{n+1}\rangle$ and $\langle \L_{X}^{n}\rangle\cdot K_{\X} =  \langle \L_Y^{n}\rangle\cdot K_{\Y}$, which is enough to imply the result. 

For the associated filtrations, note for $k$ sufficiently large and divisible that $s\in \cF^\lambda_{(\X,\L_X)}H^0(X,kL_X)$ if and only if $t^{-\lambda}\tilde s$ is a section of $k\cL_X$. This is the case if and only if $t^{-\lambda}(\pi^*\tilde s)\otimes s_{E'}$ is a section of $k\cL_Y$, but $(\pi^*\tilde s)\otimes s_{E'}$ is the $\bbc^*$-closure of $(\pi^*s)\otimes s_E$, meaning this is equivalent to $\pi^*s\otimes s_E\in \cF^\lambda_{(\Y,\L_Y)}R(Y,L_Y)$. Since the isomorphism $H^0(X,L_X)\simeq H^0(Y,L_Y)=H^0(Y,L_X+E)$ is given by $s\mapsto (\pi^*s)\otimes s_E$, this shows that $(\X,\L_X)$ and $(\Y,\L_Y)$ induce the same filtration through this isomorphism of section rings.\end{proof}

We may now prove Theorem \ref{thm:pushforwards}.

\begin{proof}[Proof of Theorem \ref{thm:pushforwards}.]
First assume $(Y,L_Y)$ is uniformly K-stable. Let $(\X,\L_X)$ be a test configuration for $(X,L_X)$, which without loss of generality we may assume to be smooth. We can construct a degeneration $\Y$ of $Y$ satisfying the hypotheses of Proposition \ref{prop:pullbacks} through the same procedure as in the proof of Theorem \ref{thm:pullbackinvariance}. Then, Proposition \ref{prop:pullbacks} immediately shows $(X,L_X)$ is uniformly K-stable.

Conversely, assume $(X,L_X)$ is uniformly K-stable. Given a test configuration $(\Y,\L_Y)$ for $(Y,L_Y)$ with associated divisorial filtration of the form $\cF_{(E_i,t_i)}$, we can as in the proof of Theorem \ref{thm:pullbackinvariance} find a degeneration $\X$ of $X$ such that the $E_i$ are realised as components $E_{i,X}$ of $\X_0$ and with a morphism $\pi:\cY\to \cX$. We likewise define a test configuration $(\X,\L_X)$ of $(X,L_X)$ by
\begin{equation}\label{eq:tc2} 
\L_X = \sum_i t_i E_{i,X}+c\sum_j F_j
\end{equation} 
with $c$ sufficiently large, so that the filtration induced by $(\X,\L_X)$ on $R(X,L_X)\simeq R(Y,L_X)$ coincides with the filtration $\cF_{(E_i,t_i)}$ induced by $(\Y,\L_Y)$. Those filtrations are then equal to the filtration induced by the pullback test configuration $(\Y,\L_Y'):=(\Y,\pi^*\L_X + E')$ as in Proposition \ref{prop:pullbacks}. We then conclude using Proposition \ref{prop:pullbacks} and Lemma \ref{lem:comparison} as in the proof of Theorem \ref{thm:pullbackinvariance} that $(Y,L_Y)$ is uniformly K-stable.
\end{proof}

\begin{remark} We briefly outline another proof that, for test configurations for $(X,L_X)$ and $(Y,L_Y)$, the Monge--Amp\`ere measures agree provided the filtrations agree, and that the derivatives of the Monge--Amp\`ere energies agree on the big cone, leaving full details to the interested reader. Consider the Fujita approximations $(X_m,L_{X,m})$ and $(Y_m,L_{Y,m})$ of $(X,L_X)$ and $(Y,L_Y)$, so that the pullback $\pi:Y \to X$ produces a morphism $\pi_m: Y_m \to X_m$ such that $\pi_m^*L_{X,m} = L_{Y_m}$, where $L_{X,m}$ and $L_{Y_m}$ are semiample, but not in general ample. Define $(Z_m,L_{Z,m})$ by taking Proj of the section rings of $(X_m,L_{X,m})$, or equivalently  $(Y_m,L_{Y,m})$, and note that $Y_m$ and $X_m$ have morphisms to $Z_m$ under which the ample line bundle $L_{Z,m}$ pulls back to $L_{Y,m}$ and $L_{X,m}$ respectively.

We recall the construction of Theorem \ref{thm:bchen}, which associates a sequence of test configurations $(\X_m,\L_{X,m})$ to $(X_m,L_{X,m})$, and likewise for $(Y_m,L_{Y,m})$. The test configurations satisfy the condition that $\L_{X,m}$ is relatively semiample, with the test configuration being produced as associated to the filtration on $H^0(X_m,mL_{X,m}) \cong H^0(X,mL_X)$ induced by $(\X,\L_X)$. In particular, the filtrations induced on the (isomorphic) section rings of $(X_m,L_{X,m})$ and $(Y_m,L_{Y,m})$ agree, since the filtrations induced by $(\X,\L_X)$ on $H^0(X,mL_X)$ and by $(\Y,\L_Y)$ on $H^0(Y,mL_Y)$ agree by construction. 

The relative Proj of $(\X_m,\L_{X,m})$ and by $(\Y_m,\L_{Y_m})$ therefore agree, and produce $(\scZ_m, \L_{Z, m})$ which is a test configuration for $(Z_m,L_{Z,m})$, with  $L_{Z,m}$ ample and $\L_{Z, m}$ relatively ample. The Monge--Amp\`ere measures of   $(\Y_m,\L_{Y_m})$ and $( Z_m, \L_{Z, m})$ agree, since they agree with the Monge--Amp\`ere measure of $(\scZ_m, \L_{Z, m})$, by the projection formula. Indeed, $\L_{Z,m}$ pulls back to $\L_{Y,m}$ and $\L_{X,m}$ respectively on $\Y_m$ and $\X_m$, so the projection formula gives equality of the associated  Monge--Amp\`ere measures.

We  claim that the Monge--Amp\`ere measures of  $(\X_m,\L_{X,m})$ suitably converge to  $\MA(\X,\L_{X})$. Consider a component $F_i$ of $\X_0$. Then $$\lim_{m\to\infty} ( \L_{X,m}^n)\cdot F_i = \langle \L_X^n\rangle \cdot F_i ,$$ where on the left hand side we mean the proper transform of $F_i$ in $\X_m$, by Fujita approximation. On the other hand, $F_i$ also corresponds to a component of $\Y_0$ by taking the proper transform, and with respect to this component we also have $$\lim_{m\to\infty} ( \L_{Y,m}^n)\cdot F_i = \langle \L_Y^n\rangle \cdot F_i.$$ Since $ ( \L_{Y,m}^n)\cdot F_i  =  ( \L_{X,m}^n)\cdot F_i$, it follows that for all components of $\X_0$ and $\Y_0$, its  coefficients in $\MA(\Y,\L_Y)$ and $\MA(\X,\L_X)$ are equal. A similar argument implies that the derivatives of the Monge--Amp\`ere energies also agree, by proving equality with the analogous derivative for $(\scZ_m, \L_{Z, m})$.
\end{remark}
	
	\subsection{$b$-divisors}\label{sec:b-div} A trivial consequence of these results is that uniform K-stability may be defined at the level of $b$-divisors. We briefly explain the relevant theory, which may be found for example in \cite{bfj:teissier}. The \emph{Riemann--Zariski space} of $X$ is the projective limit $$\underline X:=\varprojlim_{\{\pi:Y\to X\}} Y$$ over all birational morphisms $\pi: Y \to X$ with $Y$ smooth. The Riemann--Zariski space is  an invariant of the birational isomorphism class of $X$, and admits two classes of divisors: \emph{Cartier $b$-divisors} and \emph{Weil $b$-divisors}. The former are defined as an element in $N^1(Y)$ on birational model $Y$ of $X$; this induces through pullback a class in $N^1$ of any birational model of $X$ with a morphism to $Y$, and such models are cofinal in the projective system. Weil $b$-divisors are defined as classes in $N^1$ of each member of (a cofinal subset of) the projective system, compatible with the natural pushforward operation; in other words, they are elements in the projective limit $$N^1(\underline X):=\varprojlim_{\{\pi:Y\to X\}} N^1(Y).$$ The image of a Weil $b$-divisor by the projection morphism $N^1(\underline X)\to N^1(Y)$ on a model $Y$ is called its \emph{realisation} on $Y$. Since Cartier $b$-divisors naturally form a subset $CN^1(\underline X)\subset N^1(\underline X)$, this also defines the realisation of a Cartier $b$-divisor.

\begin{definition}
We say that a Cartier $b$-divisor $\underline L$ on $\underline X$ is \emph{uniformly K-stable} if its realisation $(Y,L_Y)$ on each $Y$ is uniformly K-stable.
\end{definition}

It follows from Theorem \ref{thm:pullbackinvariance} that this is well-defined, and equivalent to any realisation being uniformly K-stable. 

For a Weil $b$-divisor $\underline L$ on $\underline X$, we restrict to the subsystem $\underline X^L$ of $\underline X$ such that for each $\pi_Y: (Y',L_{Y'}) \to (Y,L_Y)$, we have $L_{Y'} = \pi_Y^*L_Y +E$ for $E$ effective and $\pi_Y$-exceptional.

\begin{definition}
We say that a Weil $b$-divisor $\underline L$ on $\underline X^L$ is \emph{uniformly K-stable} if its realisation $(Y,L_Y)$ on each $Y$ is uniformly K-stable.
\end{definition}

Theorem \ref{thm:pushforwards} similarly implies that this is well-defined, and equivalent to any realisation being uniformly K-stable.

\bibliographystyle{alpha}
\bibliography{bib}

\end{document}